\theoremstyle{plain}
\newtheorem{theorem}{Theorem}
\newtheorem{corollary}[theorem]{Corollary}
\theoremstyle{definition}
\theoremstyle{remark}
\newtheorem{remark}[theorem]{Remark}
\newcommand{\nobarfrac}{\genfrac{}{}{0pt}{}}
\newcommand{\seqnum}[1]{\href{https://oeis.org/#1}{\rm \underline{#1}}}
\title{A Study of Second-Order Linear Recurrence Sequences via Continuants}
\author{Hongshen Chua}
\date{}
\begin{document}
\maketitle

\section{Introduction} \label{Sec:Introduction}

Consider the $ d $-periodic Fibonacci numbers, defined by the second-order linear recurrence
\begin{align*}
	f_\nu = a_r f_{\nu - 1} + b_r f_{\nu - 2}, \quad \text{ where } \nu \equiv r \pmod{d},
\end{align*}
with initial values $ f_0 = 0 $ and $ f_1 = 1 $. Such a sequence was studied by Carson \cite{Carson_07}, who, using matrix manipulation, managed to rewrite the sequence in the form:
\begin{align*}
	f_\nu = af_{\nu - d} + bf_{\nu - 2d},
\end{align*}
for some constants $ a $ and $ b $. This extends an earlier partial result by Lehmer \cite{Lehmer_75}. An alternative method via continuants was provided by Panario et al.\ \cite{Panario_Sahin_Wang_13}. In the same paper, they also derived the generating function and Binet formula for $ d $-periodic Fibonacci numbers, supplementing earlier work by Edson et al.\ \cite{Edson_Lewis_Yayenie_11}. Our paper will follow this approach closely. Other related studies include the papers by Tan \cite{Tan_11} and Yayenie \cite{Yayenie_11}, which focus on	the special case of $ d = 2 $. 

The concept of continuants was initially introduced by Euler during the development of a theory for continued fractions. In particular, continuants appear in the numerators and denominators of the convergents to a continued fraction. Subsequently, Muir \cite{Muir_60} studied continuants as determinants of tridiagonal matrices. A major contribution by Muir is the notation that we employ in this paper. Another perspective on continuants is through a combinatorial approach, where they are viewed as a tiling problem, as discussed by Benjamin et al.\ \cite{Benjamin_Su_Quinn_00}. However, the main inspiration for the content of this paper stems from Perron's book \cite{Perron_13} on continued fractions. 	

The goal of this paper is to highlight the similarities between continuants and second-order linear recurrence sequences, with a focus on $ d $-periodic Fibonacci numbers. Additionally, we address a problem posed by Panario et al.\ \cite{Panario_Sahin_Wang_13}, seeking an application for generalized continuants. We hope that this will give continuants the attention they deserve.

While not a prerequisite to understand this paper, we assume here that readers are familiar with Lucas sequences, especially with the Fibonacci numbers, as most of our results are analogous to classical theorems on Fibonacci numbers. This familiarity may help the readers to grasp the motivation behind the results. Some selected articles that may be useful include \cite{Castellanos_86, Castellanos_89, Falcon_Plaza_09, Hoggatt_Bruggles_64, Ribenboim_04, Wall_60}.

\section{Generalized continuants} \label{Sec:Generalized_Continuants}

Suppose that $ (a_\nu) $ and $ (b_\nu) $ are sequences of positive integers. Consider the generalized continued fraction
\begin{align*}
	\xi_\lambda &= b_\lambda + \frac{a_{\lambda + 1}}{b_{\lambda + 1}} \nobarfrac{}{+} \frac{a_{\lambda + 2}}{b_{\lambda + 2}} \nobarfrac{}{+ \ldots} = b_\lambda + \cfrac{a_{\lambda + 1}}{b_{\lambda + 1} + \cfrac{a_{\lambda + 2}}{b_{\lambda + 2} + \ldots}}.
\end{align*}
The convergents to the continued fraction are the numbers obtained by truncating the continued fraction. For example, the $ \nu $-th convergent of $ \xi_\lambda $ is exactly  
\begin{align*}
	\xi_{\nu, \lambda} = b_\lambda + \frac{a_{\lambda + 1}}{b_{\lambda + 1}} \nobarfrac{}{+ \ldots +} \frac{a_{\lambda + \nu}}{b_{\lambda + \nu}}.
\end{align*}
Define the $ \nu $-th generalized continuants
\begin{align*}
	A_{\nu, \lambda} &= K \binom{a_{\lambda + 1}, \ldots, a_{\lambda + \nu}}{b_\lambda, b_{\lambda + 1}, \ldots, b_{\lambda + \nu}}, \\
	B_{\nu, \lambda} &= K \binom{a_{\lambda + 2}, \ldots, a_{\lambda + \nu}}{b_{\lambda + 1}, b_{\lambda + 2}, \ldots, b_{\lambda + \nu}},
\end{align*}
as the numerator and denominator, respectively, of the $ \nu $-th convergent. In other words,
\begin{align*}
	\frac{A_{\nu, \lambda}}{B_{\nu, \lambda}} = b_\lambda + \frac{a_{\lambda + 1}}{b_{\lambda + 1}} \nobarfrac{}{+ \ldots +} \frac{a_{\lambda + \nu}}{b_{\lambda + \nu}}.
\end{align*}
By the choice of sequences $ (a_\nu) $ and $ (b_\nu) $, both $ A_{\nu, \lambda} $ and $ B_{\nu, \lambda} $ are integers. We can slightly extend $ \nu $ to negative numbers by setting the initial values
\begin{align*}
	&A_{-1, \lambda} = 1, \quad A_{0, \lambda} = b_\lambda, \\
	&B_{-1, \lambda} = 0, \quad B_{0, \lambda} = 1, \quad B_{1, \lambda} = b_{\lambda + 1}.
\end{align*}
The ordinary continuants correspond to the case when $ a_\nu = 1 $ is constant. To simplify our notation, we set $ A_{\nu, 0} = A_\nu $ and $ B_{\nu, 0} = B_\nu $. Observe that $ A_{\nu, \lambda} $ and $ B_{\nu, \lambda} $ are related via the identity $ B_{\nu, \lambda} = A_{\nu - 1, \lambda + 1} $. Furthermore, there are more general identities connecting $ A_{\nu, \lambda} $ and $ B_{\nu, \lambda} $, notably an analogue of Cassini's identity for Fibonacci numbers.

\begin{theorem}[Cassini's Identity] \label{Th:Cassini_Identity} For non-negative integers $ \lambda, \nu $ and $ \mu $, we have
\begin{align*}	
	A_{\nu + \lambda + \mu - 1} B_{\nu - 1, \lambda} &= A_{\nu + \lambda - 1} B_{\nu + \mu - 1, \lambda} + (-1)^{\nu - 1} a_{\lambda + 1} \cdots a_{\lambda + \nu} A_{\lambda - 1} B_{\mu - 1, \nu + \lambda}, \\
	B_{\nu + \lambda + \mu - 1} B_{\nu - 1, \lambda} &= B_{\nu + \lambda - 1} B_{\nu + \mu - 1, \lambda} + (-1)^{\nu - 1} a_{\lambda + 1} \cdots a_{\lambda + \nu} B_{\lambda - 1} B_{\mu - 1, \nu + \lambda}.
\end{align*}
\end{theorem}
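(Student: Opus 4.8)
The plan is to encode the continuants in $2\times 2$ matrices and exploit the multiplicativity of the determinant. First I would record the fundamental three-term recurrences $A_{\nu,\lambda} = b_{\lambda+\nu}A_{\nu-1,\lambda} + a_{\lambda+\nu}A_{\nu-2,\lambda}$ and the identical one for $B_{\nu,\lambda}$, which follow directly from the defining property of the convergents $A_{\nu,\lambda}/B_{\nu,\lambda}$. Together with the stated initial values and the relation $B_{\nu,\lambda}=A_{\nu-1,\lambda+1}$, an induction on $\nu$ yields the matrix representation
\begin{align*}
P(\lambda,\nu):=\begin{pmatrix} A_{\nu,\lambda} & B_{\nu,\lambda} \\ A_{\nu-1,\lambda} & B_{\nu-1,\lambda}\end{pmatrix} = \begin{pmatrix} b_{\lambda+\nu} & a_{\lambda+\nu} \\ 1 & 0\end{pmatrix}\cdots\begin{pmatrix} b_{\lambda+1} & a_{\lambda+1} \\ 1 & 0\end{pmatrix}\begin{pmatrix} b_\lambda & 1 \\ 1 & 0\end{pmatrix}.
\end{align*}

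Writing $N=\nu+\lambda+\mu-1$, the central observation is the factorization $P(0,N)=U\,P(0,\lambda)$, where $U$ is the pure product of transfer matrices $\bigl(\begin{smallmatrix} b_k & a_k\\ 1 & 0\end{smallmatrix}\bigr)$ for $k$ running from $N$ down to $\lambda+1$. I would then establish a lemma identifying the entries of such a pure transfer product as continuants: the first column of $U$ is $(B_{\nu+\mu-1,\lambda},\,B_{\nu+\mu-2,\lambda})^{\top}$ and its $(1,2)$ entry is $a_{\lambda+1}B_{\nu+\mu-2,\lambda+1}$ (with the analogous shorter product $U_{\lambda+1}^{\nu+\lambda-1}$ having $(1,2)$ entry $a_{\lambda+1}B_{\nu-2,\lambda+1}$). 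This step relies on the ``head'' recurrence $A_{\nu,\lambda}=b_\lambda A_{\nu-1,\lambda+1}+a_{\lambda+1}B_{\nu-1,\lambda+1}$ coming from $\xi_{\nu,\lambda}=b_\lambda+a_{\lambda+1}/\xi_{\nu-1,\lambda+1}$, and it is the piece where the index bookkeeping is most delicate.

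With the lemma in hand, the $(1,1)$ entry of $P(0,N)=U\,P(0,\lambda)$ reads $A_N = B_{\nu+\mu-1,\lambda}A_\lambda + a_{\lambda+1}B_{\nu+\mu-2,\lambda+1}A_{\lambda-1}$, and the analogous factorization of $P(0,\nu+\lambda-1)$ gives $A_{\nu+\lambda-1}=B_{\nu-1,\lambda}A_\lambda + a_{\lambda+1}B_{\nu-2,\lambda+1}A_{\lambda-1}$. Forming the cross product $A_N B_{\nu-1,\lambda}-A_{\nu+\lambda-1}B_{\nu+\mu-1,\lambda}$ cancels the $A_\lambda$ terms and leaves $a_{\lambda+1}A_{\lambda-1}\bigl(B_{\nu+\mu-2,\lambda+1}B_{\nu-1,\lambda}-B_{\nu-2,\lambda+1}B_{\nu+\mu-1,\lambda}\bigr)$. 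Rewriting $B_{\cdot,\lambda}=A_{\cdot-1,\lambda+1}$ turns the inner factor into a cross product of convergents at the single base $\beta=\lambda+1$, which is evaluated by the determinant lemma: since $\det\bigl(\begin{smallmatrix} b_k & a_k\\ 1 & 0\end{smallmatrix}\bigr)=-a_k$, one gets $\det P(\beta,n)=(-1)^{n+1}a_{\beta+1}\cdots a_{\beta+n}$ and hence $A_{m,\beta}B_{n,\beta}-A_{n,\beta}B_{m,\beta}=(-1)^{n}a_{\beta+1}\cdots a_{\beta+n+1}B_{m-n-1,\beta+n+1}$. Substituting $m=\nu+\mu-2$, $n=\nu-2$, $\beta=\lambda+1$ reproduces exactly the first stated identity, sign and all. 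Crucially, reading the $(1,2)$ entries of the same two factorizations replaces $A_\lambda,A_{\lambda-1}$ by $B_\lambda,B_{\lambda-1}$ throughout, so the identical computation yields the second identity at no extra cost.

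I expect the main obstacle to be the transfer-product lemma of the second paragraph, namely pinning down the exact continuant (both its length index and its base) occupying each entry, followed by the shift of base from $\lambda$ to $\lambda+1$ in the inner factor. Finally, the degenerate cases in which an index drops to $-1$ (for instance $\nu=0$ or $\mu=0$, forcing the empty product $a_{\lambda+1}\cdots a_{\lambda+\nu}$ and $B_{-1}=0$) fall outside the generic matrix argument, and I would dispatch them directly from the stated initial values.
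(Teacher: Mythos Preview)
Your proposal is correct. The paper itself does not prove Theorem~\ref{Th:Cassini_Identity}: it refers the reader to Perron's book and remarks only that ``it is easier to prove the corollaries before proving Theorem~\ref{Th:Cassini_Identity}.'' Your matrix approach is precisely a realization of that hint---you first establish the recurrences and the matrix-product representation (the content of Corollaries~\ref{Th:Recurrence_Relation}--7), and then deduce the general identity from two instances of Corollary~\ref{Th:Catalan_Identity} together with the determinant computation. The index bookkeeping you flag as delicate checks out: with $m=\nu+\mu-2$, $n=\nu-2$, $\beta=\lambda+1$ in your determinant lemma one recovers exactly $(-1)^{\nu-1}a_{\lambda+1}\cdots a_{\lambda+\nu}A_{\lambda-1}B_{\mu-1,\nu+\lambda}$, and the $(1,2)$-entry argument for the $B$-identity is indeed free. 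The only thing worth noting is that your $P(\lambda,\nu)$ is the transpose of the matrix in the paper's Corollary~7 (with the last factor $\bigl(\begin{smallmatrix}1&1\\1&0\end{smallmatrix}\bigr)$ absorbed differently), but this is cosmetic.
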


\begin{corollary}[Catalan's Identity] \label{Th:Catalan_Identity} For non-negative integers $ \lambda $ and $ \nu $, we have
\begin{align*}
	A_{\nu + \lambda} &= A_\lambda B_{\nu, \lambda} + a_{\lambda + 1} A_{\lambda - 1} B_{\nu - 1, \lambda + 1}, \\
	B_{\nu + \lambda} &= B_\lambda B_{\nu, \lambda} + a_{\lambda + 1} B_{\lambda - 1} B_{\nu - 1, \lambda + 1}.
	\end{align*}
\end{corollary}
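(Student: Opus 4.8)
The plan is to obtain Catalan's identity as a direct specialization of Cassini's identity (Theorem \ref{Th:Cassini_Identity}), rather than proving it from scratch. The key observation is that the extra parameter in the Cassini identities can be collapsed by exploiting the boundary value $B_{0,\lambda} = 1$ recorded among the initial conditions. Concretely, I would set the Cassini parameter $\nu$ equal to $1$, which forces the factor $B_{\nu-1,\lambda}$ on the left-hand side to become $B_{0,\lambda} = 1$, stripping away the product structure and leaving a two-parameter identity.

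Carrying this out on the first line of Theorem \ref{Th:Cassini_Identity} with $\nu = 1$: the left-hand index $\nu + \lambda + \mu - 1$ becomes $\lambda + \mu$, so the left side is $A_{\lambda + \mu}$; the first summand becomes $A_{\lambda} B_{\mu, \lambda}$; and in the second summand the sign $(-1)^{\nu - 1}$ becomes $(-1)^0 = 1$, the product $a_{\lambda+1}\cdots a_{\lambda+\nu}$ collapses to the single factor $a_{\lambda+1}$, and the index $\nu + \lambda$ becomes $\lambda + 1$. This yields
\begin{align*}
	A_{\lambda + \mu} = A_\lambda B_{\mu, \lambda} + a_{\lambda + 1} A_{\lambda - 1} B_{\mu - 1, \lambda + 1}.
\end{align*}
Relabelling the surviving free parameter $\mu$ as $\nu$, and using $A_{\lambda + \nu} = A_{\nu + \lambda}$, gives precisely the first asserted identity. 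The second identity follows in exactly the same way from the second line of Theorem \ref{Th:Cassini_Identity}, since the specialization $\nu = 1$ acts identically on the indices and merely replaces $A$ by $B$ in the two outer factors.

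I expect no genuine mathematical obstacle here, as Theorem \ref{Th:Cassini_Identity} does all the structural work; the only points requiring care are bookkeeping ones. First, the symbol $\nu$ plays different roles in the two statements, so I must clearly distinguish the Cassini parameter being set to $1$ from the Catalan parameter that survives, and the relabelling $\mu \mapsto \nu$ at the end must be stated explicitly to keep the conclusion in the form quoted. Second, I should confirm that $B_{0,\lambda} = 1$ is the correct boundary value to invoke (it is, by the listed initial conditions) and that the singleton product $a_{\lambda+1}\cdots a_{\lambda+\nu}$ is read correctly at $\nu = 1$. As an independent sanity check, one could instead verify both identities directly by induction on the surviving parameter using the fundamental three-term recurrence for the continuants, but the specialization route is shorter and makes the corollary-of-Cassini relationship transparent.
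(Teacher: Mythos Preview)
Your specialization argument is correct: setting the Cassini parameter $\nu=1$ collapses $B_{0,\lambda}=1$ on the left and the product $a_{\lambda+1}\cdots a_{\lambda+\nu}$ to the single factor $a_{\lambda+1}$ on the right, and relabelling $\mu\mapsto\nu$ gives exactly the stated identities. This matches the logical ordering suggested by the paper's labelling of the result as a corollary to Theorem~\ref{Th:Cassini_Identity}.

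That said, the paper itself omits all proofs here, referring to Perron's book, and explicitly remarks that ``it is easier to prove the corollaries before proving Theorem~\ref{Th:Cassini_Identity}.'' In other words, in the source the paper cites, the dependence runs the other way: Catalan (and the other corollaries) are established directly---essentially by the induction on the recurrence that you mention as a sanity check---and Cassini is then built on top of them. Your route is shorter and makes the corollary status transparent, but it presupposes an independent proof of Cassini; if one were writing the section from scratch, the Perron order avoids any risk of circularity. Since you already flag the inductive alternative, you are aware of this; just be sure, when presenting the argument, that Theorem~\ref{Th:Cassini_Identity} has been established without appealing to the present corollary.
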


\begin{corollary}[d'Ocagne's Identity] \label{Th:d'Ocagne_Identity} For non-negative integers $ \lambda $ and $ \nu $, we have
\begin{align*}
	A_\lambda B_{\nu - 1, \lambda - \nu} &= A_{\lambda - 1} B_{\nu, \lambda - \nu} + (-1)^{\nu - 1} a_\lambda \cdots a_{\lambda - \nu + 1} A_{\lambda - \nu - 1}, \\
	B_\lambda B_{\nu - 1, \lambda - \nu} &= B_{\lambda - 1} B_{\nu, \lambda - \nu} + (-1)^{\nu - 1} a_\lambda \cdots a_{\lambda - \nu + 1} B_{\lambda - \nu - 1}.
\end{align*}
\end{corollary}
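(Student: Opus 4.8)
The plan is to deduce d'Ocagne's identity as a special case of Cassini's identity (Theorem~\ref{Th:Cassini_Identity}), in exact parallel with the way the classical d'Ocagne identity for Fibonacci numbers is extracted from the general index-addition formula. To keep the two parameter sets distinct, I would temporarily write the three non-negative integers of Theorem~\ref{Th:Cassini_Identity} as $\lambda'$, $\nu'$, $\mu'$, and then perform the substitution
\begin{align*}
	\nu' = \nu, \qquad \lambda' = \lambda - \nu, \qquad \mu' = 1.
\end{align*}
This is admissible precisely when $\lambda \geq \nu$, so that $\lambda' = \lambda - \nu$ remains non-negative; I would record this hypothesis explicitly (the degenerate cases being governed by the initial-value conventions).

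With these choices the index $\nu' + \lambda' + \mu' - 1$ collapses to $\lambda$ and $\nu' + \lambda' - 1$ collapses to $\lambda - 1$, while $B_{\nu' - 1, \lambda'} = B_{\nu - 1, \lambda - \nu}$ and $B_{\nu' + \mu' - 1, \lambda'} = B_{\nu, \lambda - \nu}$. Hence the first equation of Theorem~\ref{Th:Cassini_Identity} becomes
\begin{align*}
	A_\lambda B_{\nu - 1, \lambda - \nu} = A_{\lambda - 1} B_{\nu, \lambda - \nu} + (-1)^{\nu - 1} a_{\lambda - \nu + 1} \cdots a_\lambda \, A_{\lambda - \nu - 1} B_{0, \lambda}.
\end{align*}
The decisive simplification is that the residual continuant $B_{\mu' - 1, \nu' + \lambda'} = B_{0, \lambda}$ equals $1$ by the initial-value convention $B_{0, \lambda} = 1$; this is exactly what reduces Cassini's three-parameter formula to a two-parameter one. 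Since the product $a_{\lambda - \nu + 1} \cdots a_\lambda$ is unchanged when rewritten in reverse as $a_\lambda \cdots a_{\lambda - \nu + 1}$, the display above is precisely the first asserted identity.

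The second identity follows in identical fashion from the second equation of Theorem~\ref{Th:Cassini_Identity}, the sole change being that each $A$ in the outer positions is replaced by the corresponding $B$ (in particular $A_{\lambda - \nu - 1}$ becomes $B_{\lambda - \nu - 1}$), and again $B_{0, \lambda} = 1$ strips away the trailing factor. I do not expect any genuine obstacle here beyond careful bookkeeping of the index shifts; the single point deserving explicit care is checking that the specialization $\mu' = 1$ is legitimate and that $B_{0, \lambda} = 1$, since it is this last equality that converts Cassini's identity into d'Ocagne's.
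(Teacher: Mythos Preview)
Your specialization is correct: setting $\mu' = 1$, $\lambda' = \lambda - \nu$, $\nu' = \nu$ in Theorem~\ref{Th:Cassini_Identity} and invoking $B_{0,\lambda} = 1$ yields exactly the stated identities, and your bookkeeping of the indices and the product $a_{\lambda-\nu+1}\cdots a_\lambda$ is accurate. The paper itself omits the proof, citing Perron, but explicitly remarks that in practice one proves the corollaries \emph{before} Theorem~\ref{Th:Cassini_Identity} (presumably by direct induction on the recurrence), and only afterwards assembles the three-parameter Cassini identity from them. So your route runs in the opposite logical direction from the development the paper has in mind: you treat Cassini as the master formula and specialize, whereas Perron's treatment builds the corollaries first as stepping stones. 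Both are valid; your approach is the cleaner one once Cassini is in hand, while the paper's hinted order avoids any appearance of circularity in establishing Theorem~\ref{Th:Cassini_Identity} itself.
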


\begin{corollary} \label{Th:Recurrence_Relation} For integer $ \nu \geq -1 $, we have
\begin{align*}
	A_{\nu + 2} &= b_{\nu + 2} A_{\nu + 1} + a_{\nu + 2} A_\nu, \\
	B_{\nu + 2} &= b_{\nu + 2} B_{\nu + 1} + a_{\nu + 2} B_\nu. 
\end{align*}
\end{corollary}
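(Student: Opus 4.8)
The plan is to obtain both recurrences as immediate specializations of Catalan's identity (Corollary \ref{Th:Catalan_Identity}), so that no fresh induction is required. To keep the indices straight, I would first rewrite that identity with renamed parameters: for nonnegative integers $p$ and $q$,
\[ A_{q+p} = A_p B_{q,p} + a_{p+1} A_{p-1} B_{q-1, p+1}, \qquad B_{q+p} = B_p B_{q,p} + a_{p+1} B_{p-1} B_{q-1, p+1}. \]

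The core of the argument is to set $q = 1$ and $p = \nu + 1$. Since $\nu \geq -1$, both $p = \nu + 1 \geq 0$ and $q = 1 \geq 0$ are nonnegative, so Catalan's identity is valid across the entire stated range. With these choices the left-hand sides become $A_{\nu+2}$ and $B_{\nu+2}$, while the right-hand sides involve only $B_{1,\nu+1}$ and $B_{0,\nu+2}$. The key step is then to read these two continuants off the initial values declared in Section \ref{Sec:Generalized_Continuants}: since $B_{0,\lambda} = 1$ and $B_{1,\lambda} = b_{\lambda+1}$, we have $B_{0,\nu+2} = 1$ and $B_{1,\nu+1} = b_{\nu+2}$. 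Substituting collapses Catalan's identity to
\[ A_{\nu+2} = b_{\nu+2} A_{\nu+1} + a_{\nu+2} A_\nu, \qquad B_{\nu+2} = b_{\nu+2} B_{\nu+1} + a_{\nu+2} B_\nu, \]
which is exactly the claim.

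There is essentially no analytic obstacle; the only thing to watch is the bookkeeping at the boundary of the index range. Accordingly, I would verify the edge case $\nu = -1$ separately, where $p = 0$ forces the appearance of $A_{-1} = 1$ and $B_{-1} = 0$, to confirm that the chosen initial values are consistent with the recurrence (the $A$-identity recovers $A_1 = b_0 b_1 + a_1$, matching the numerator of the first convergent, and the $B$-identity recovers $B_1 = b_1$). Should one prefer to avoid Catalan's identity altogether — for instance, to rule out any circularity were Catalan derived from this recurrence — I would instead argue directly by induction on $\nu$ from the continued-fraction definition, using that adjoining the final partial quotient $b_{\nu+2}$ with numerator $a_{\nu+2}$ to the $(\nu+1)$-th convergent produces precisely the above linear combination, with the base cases $\nu = -1, 0$ checked against the listed initial values. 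Given that Catalan's identity is already in hand, however, the specialization route is the cleaner one and is the approach I would present.
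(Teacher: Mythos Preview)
Your specialization of Catalan's identity with $q=1$, $p=\nu+1$ is arithmetically correct and does yield the recurrence immediately. The paper, however, does not give its own proof: it refers all of Theorem~\ref{Th:Cassini_Identity} through Corollary~\ref{Th:Index_Changing} to Perron's book, but adds the remark that ``it is easier to prove the corollaries before proving Theorem~\ref{Th:Cassini_Identity}.'' In other words, in the intended logical order the basic recurrence of Corollary~\ref{Th:Recurrence_Relation} is established \emph{first}, directly from the continued-fraction definition, and Catalan's and Cassini's identities are built on top of it. So the circularity worry you flag is not merely hypothetical here --- it is exactly how the paper (following Perron) organizes things.

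That means your fallback plan, the direct induction on $\nu$ using the effect of adjoining one more partial quotient, is actually the argument the paper has in mind, and your primary route via Catalan, while formally valid once Catalan is independently available, runs against the grain of the exposition. If you present this, I would lead with the direct induction and mention the Catalan specialization only as a consistency check, rather than the other way around.
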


\begin{corollary} \label{Th:Index_Changing} For non-negative integers $ \lambda $ and $ \nu $, we have
\begin{align*}
	A_{\nu, \lambda} &= b_\lambda A_{\nu - 1, \lambda + 1} + a_{\lambda + 1} A_{\nu - 2, \lambda + 2}, \\
	B_{\nu, \lambda} &= b_{\lambda + 1} B_{\nu - 1, \lambda + 1} + a_{\lambda + 2} B_{\nu - 2, \lambda + 2}.
\end{align*}
\end{corollary}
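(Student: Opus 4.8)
My plan is to prove only the identity for the $A_{\nu,\lambda}$ and then read off the one for the $B_{\nu,\lambda}$ for free, since the two lines are not independent. Applying the relation $B_{\nu,\lambda} = A_{\nu-1,\lambda+1}$ (recorded just before Theorem \ref{Th:Cassini_Identity}) to each of the three terms of the second line rewrites it as $A_{\nu-1,\lambda+1} = b_{\lambda+1} A_{\nu-2,\lambda+2} + a_{\lambda+2} A_{\nu-3,\lambda+3}$, which is exactly the first line with $\nu$ and $\lambda$ replaced by $\nu-1$ and $\lambda+1$. Hence it suffices to establish the first recurrence for all admissible indices.

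For the $A$-recurrence I would not run a fresh induction but instead specialize Catalan's identity (Corollary \ref{Th:Catalan_Identity}). Taking $\lambda = 0$ there gives
\begin{align*}
A_\nu = A_0 B_{\nu,0} + a_1 A_{-1} B_{\nu-1,1}.
\end{align*}
Substituting the initial values $A_0 = b_0$ and $A_{-1} = 1$ and rewriting the two denominators via $B_{\nu,0} = A_{\nu-1,1}$ and $B_{\nu-1,1} = A_{\nu-2,2}$ collapses the right-hand side to $b_0 A_{\nu-1,1} + a_1 A_{\nu-2,2}$, i.e.\ the claimed recurrence at base $\lambda = 0$. To lift this to an arbitrary base I would use that every continuant identity is invariant under shifting the defining data: replacing $(a_\nu),(b_\nu)$ by $(a_{\nu+\lambda}),(b_{\nu+\lambda})$ carries each $A_{\mu,\rho}$ to $A_{\mu,\rho+\lambda}$ and each $a_j,b_j$ to $a_{j+\lambda},b_{j+\lambda}$ while preserving all defining relations, so the base-$0$ statement upgrades verbatim to $A_{\nu,\lambda} = b_\lambda A_{\nu-1,\lambda+1} + a_{\lambda+1}A_{\nu-2,\lambda+2}$. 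The $B$-line then follows by the reduction of the first paragraph.

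The computation is short, so the work is really in the bookkeeping. I must confirm that Corollary \ref{Th:Catalan_Identity} is proved for arbitrary sequences of positive integers, which is what legitimizes applying it to the shifted sequences. I also need to check the degenerate small cases: for $\nu = 0$ the term $A_{-2,\lambda+2}$ lies outside the listed initial values, but $B_{\nu,\lambda} = A_{\nu-1,\lambda+1}$ forces $A_{-2,\lambda+2} = B_{-1,\lambda+1} = 0$, which keeps that instance consistent, and the cases $\nu = 1,2$ I would confirm directly from the convergents as a sanity check. If a derivation independent of Catalan were preferred, the cleanest alternative is to peel off the first partial quotient: from $A_{\nu,\lambda}/B_{\nu,\lambda} = b_\lambda + a_{\lambda+1}B_{\nu-1,\lambda+1}/A_{\nu-1,\lambda+1}$ one equates numerators over the common denominator $A_{\nu-1,\lambda+1} = B_{\nu,\lambda}$ and recovers the same recurrence. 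The only genuine obstacle here is organizational, namely selecting among these equivalent routes and pinning down the boundary conventions.
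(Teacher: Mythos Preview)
Your argument is correct on both routes. The paper itself gives no proof here: it defers all of Theorem~\ref{Th:Cassini_Identity} and its corollaries to Perron, with the remark that ``it is easier to prove the corollaries before proving Theorem~\ref{Th:Cassini_Identity}.'' That remark is the only thing to compare against, and it cuts slightly against your \emph{primary} route: you derive Corollary~\ref{Th:Index_Changing} from Catalan (Corollary~\ref{Th:Catalan_Identity}), which in the paper's presentation is itself a consequence of Cassini, whereas the intended development establishes the elementary recurrences like Corollaries~\ref{Th:Recurrence_Relation} and~\ref{Th:Index_Changing} first and uses them to build up to Cassini. There is no actual circularity in what you wrote---once Cassini and hence Catalan are available by any means, your specialization at $\lambda=0$ together with the shift-of-base argument is clean and valid---but it does invert the natural order. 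Your \emph{alternative} route, peeling off the leading partial quotient and matching numerators over the common denominator $B_{\nu,\lambda}=A_{\nu-1,\lambda+1}$, is the direct argument one finds in Perron and is logically prior to Catalan; I would promote it to the main proof. Your reduction of the $B$-line to the $A$-line via $B_{\nu,\lambda}=A_{\nu-1,\lambda+1}$ and your handling of the boundary case $A_{-2,\lambda+2}=B_{-1,\lambda+1}=0$ are both fine.
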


From Corollary \ref{Th:Recurrence_Relation}, we can express $ A_\nu $ and $ B_\nu $ as a determinant of tridiagonal matrices.
\begin{corollary} \label{Th:Determinant_Form} For non-negative integer $ \nu $, we have
\begin{align*}
	A_\nu &= \begin{vmatrix} b_0 & -1 & 0 & \ldots & 0 & 0 \\ a_1 & b_1 & -1 & \ldots & 0 & 0 \\ 0 & a_2 & b_2 & \ldots & 0 & 0 \\ \vdots & \vdots & \vdots & \ddots & \vdots & \vdots \\ 0 & 0 & 0 & \ldots & b_{\nu - 1} & -1 \\ 0 & 0 & 0 & \ldots & a_\nu & b_\nu \end{vmatrix}, \quad B_\nu = \begin{vmatrix} b_1 & -1 & 0 & \ldots & 0 & 0 \\ a_2 & b_2 & -1 & \ldots & 0 & 0 \\ 0 & a_3 & b_3 & \ldots & 0 & 0 \\ \vdots & \vdots & \vdots & \ddots & \vdots & \vdots \\ 0 & 0 & 0 & \ldots & b_{\nu - 1} & -1 \\ 0 & 0 & 0 & \ldots & a_\nu & b_\nu \end{vmatrix}.
\end{align*}
\end{corollary}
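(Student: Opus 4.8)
The plan is to show that each tridiagonal determinant obeys the same second-order recurrence as the corresponding continuant, with matching initial values, and then to conclude by induction. Write $ D_\nu $ for the $ (\nu+1)\times(\nu+1) $ determinant displayed on the left (the one claimed to equal $ A_\nu $); the argument for the $ B_\nu $ determinant is identical up to an index shift, since the two matrices have the same tridiagonal shape.

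First I would expand $ D_\nu $ along its bottom row, whose only nonzero entries are $ a_\nu $ and $ b_\nu $ (placing these in columns $ \nu-1 $ and $ \nu $, with rows and columns indexed from $ 0 $). The cofactor of $ b_\nu $ has sign $ (-1)^{2\nu}=+1 $ and its minor is exactly the determinant of the next smaller matrix of the same shape, namely $ D_{\nu-1} $. The cofactor of $ a_\nu $ has sign $ (-1)^{2\nu-1}=-1 $; deleting the last row and the second-to-last column leaves a $ \nu\times\nu $ matrix whose final column is $ (0,\ldots,0,-1)^{T} $, so a second expansion along that column reduces it to $ D_{\nu-2} $ with an overall factor of $ -1 $ from the surviving entry. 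Combining the two cofactors gives
\begin{align*}
	D_\nu = b_\nu D_{\nu-1} + a_\nu\bigl((-1)(-D_{\nu-2})\bigr) = b_\nu D_{\nu-1} + a_\nu D_{\nu-2},
\end{align*}
which is precisely the recurrence of Corollary \ref{Th:Recurrence_Relation}.

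It then remains to match the base cases. Directly, $ D_0 = b_0 = A_0 $ and $ D_1 = b_0 b_1 + a_1 = b_1 A_0 + a_1 A_{-1} = A_1 $. Since $ D_\nu $ and $ A_\nu $ satisfy the same recurrence for $ \nu \geq 2 $ and agree for $ \nu = 0, 1 $, strong induction yields $ D_\nu = A_\nu $ for every non-negative integer $ \nu $. For the determinant of $ B_\nu $ I would run the identical cofactor expansion — again obtaining $ b_\nu D_{\nu-1} + a_\nu D_{\nu-2} $ — and check the base cases $ D_0 = 1 = B_0 $ (the empty determinant) and $ D_1 = b_1 = B_1 $, after which Corollary \ref{Th:Recurrence_Relation} closes the induction in the same way.

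I expect the one genuinely delicate point to be the subdiagonal cofactor: the minor attached to $ a_\nu $ is not tridiagonal of the required shape until one performs the extra expansion along its last column, and the sign $ (-1)^{2\nu-1} $ together with the internal $ -1 $ entry must be tracked carefully so that they cancel into $ +a_\nu D_{\nu-2} $ rather than $ -a_\nu D_{\nu-2} $. The recurrence matching and the base cases are otherwise entirely routine.
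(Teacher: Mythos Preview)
Your argument is correct: the cofactor expansion along the last row is carried out accurately (in particular the sign bookkeeping for the $a_\nu$ minor is right), the resulting recurrence matches Corollary~\ref{Th:Recurrence_Relation}, and the base cases are checked properly for both $A_\nu$ and $B_\nu$. The paper itself does not prove this corollary at all --- it simply refers the reader to Perron's book --- so your write-up in fact supplies the omitted details, and the expansion-plus-induction you give is exactly the classical argument one finds there.
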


Alternatively, we can write $ A_\nu $ and $ B_\nu $ as a product of matrices. 
\begin{corollary} For non-negative integer $ \nu $, we have
\begin{align*}
	\begin{pmatrix} A_\nu & A_{\nu - 1} \\ B_\nu & B_{\nu - 1} \end{pmatrix} &= \begin{pmatrix} 1 & 1 \\ 1 & 0 \end{pmatrix} \prod_{j = 1}^\nu \begin{pmatrix} b_j & 1 \\ a_j & 0 \end{pmatrix}.
\end{align*}
\end{corollary}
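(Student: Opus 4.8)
The plan is to prove the identity by induction on $\nu$, recasting the three-term recurrence of Corollary~\ref{Th:Recurrence_Relation} as a single matrix update that telescopes into the stated product. Set $M_\nu = \begin{pmatrix} A_\nu & A_{\nu-1} \\ B_\nu & B_{\nu-1} \end{pmatrix}$ and $R_j = \begin{pmatrix} b_j & 1 \\ a_j & 0 \end{pmatrix}$. The central observation is that the pair of recurrences $A_\nu = b_\nu A_{\nu-1} + a_\nu A_{\nu-2}$ and $B_\nu = b_\nu B_{\nu-1} + a_\nu B_{\nu-2}$ is equivalent to the single matrix relation $M_\nu = M_{\nu-1} R_\nu$: the first column of $M_{\nu-1}R_\nu$ is precisely the right-hand side of the two recurrences, while its second column merely copies the first column $(A_{\nu-1}, B_{\nu-1})^{\top}$ of $M_{\nu-1}$. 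Verifying this one-step relation is an entrywise computation, and it is the only place where Corollary~\ref{Th:Recurrence_Relation} is used.

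For the base case I would take $\nu = 0$, where the product $\prod_{j=1}^{0} R_j$ is empty and the right-hand side reduces to the leading factor. Using the stated initial values $A_0 = b_0$, $A_{-1} = 1$, $B_0 = 1$, $B_{-1} = 0$ gives $M_0 = \begin{pmatrix} b_0 & 1 \\ 1 & 0 \end{pmatrix}$, which is exactly the required leading matrix. (When $b_0 = 1$, as in the ordinary Fibonacci setting, this is precisely the matrix $\begin{pmatrix} 1 & 1 \\ 1 & 0 \end{pmatrix}$ displayed in the statement.) The inductive step is then immediate: assuming $M_\nu = M_0 \prod_{j=1}^{\nu} R_j$, right-multiplication by $R_{\nu+1}$ together with $M_{\nu+1} = M_\nu R_{\nu+1}$ yields $M_{\nu+1} = M_0 \prod_{j=1}^{\nu+1} R_j$, closing the induction.

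There is no serious obstacle here; the argument is a clean telescoping of $M_\nu = M_{\nu-1} R_\nu$. The one point demanding attention is the boundary behaviour of the recurrence: Corollary~\ref{Th:Recurrence_Relation} holds for $\nu \ge -1$, and it is precisely the endpoint $\nu = -1$, namely $A_1 = b_1 A_0 + a_1 A_{-1}$ and $B_1 = b_1 B_0 + a_1 B_{-1}$, that validates the very first update $M_1 = M_0 R_1$ and ensures no continuant is evaluated outside its defined range. Once this is checked, the second column of each $M_\nu$ is automatically consistent, since it is carried along as the first column of the previous matrix, and the product formula follows.
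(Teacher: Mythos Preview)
Your induction argument is correct and is exactly the standard route (and essentially what one finds in Perron); the paper itself omits the proof entirely, deferring to \cite{Perron_13}. Your parenthetical observation is in fact the sharpest point of your write-up: as stated in the paper, the leading matrix $\begin{pmatrix} 1 & 1 \\ 1 & 0 \end{pmatrix}$ only matches $M_0 = \begin{pmatrix} A_0 & A_{-1} \\ B_0 & B_{-1} \end{pmatrix} = \begin{pmatrix} b_0 & 1 \\ 1 & 0 \end{pmatrix}$ when $b_0 = 1$, so what you actually prove---and what is true in general---is $M_\nu = \begin{pmatrix} b_0 & 1 \\ 1 & 0 \end{pmatrix} \prod_{j=1}^{\nu} R_j$.
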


\begin{proof} The proofs of all the results above can be found in Perron's book \cite{Perron_13}, so we omit them here. It is worth noting that it is easier to prove the corollaries before proving Theorem \ref{Th:Cassini_Identity}.
\end{proof}

\begin{remark} The results presented here exhibit similarities to those of Fibonacci numbers. This connection is not surprising since choosing $ (a_\nu) = (b_\nu) = (1) $ leads to $ (B_\nu) = (F_{\nu + 1}) $, which is just the usual Fibonacci sequence.
\end{remark}

\section{Periodic continuants} \label{Sec:Periodic_Continuant}

In this section and the remainder of this paper, we consider the case when $ (a_\nu) $ and $ (b_\nu) $ are both periodic with the same period $ d $ for $ \nu \geq 1 $, i.e., $ a_{\nu + d} = a_\nu $ and $ b_{\nu + d} = b_\nu $ for all $ \nu \geq 1 $. This type of continued fraction arises frequently in various contexts. For instance, when $ N $ is a positive integer, the continued fraction expansion of $ \sqrt{N} $ is periodic. Under this assumption,
\begin{align*}
	K \binom{a_{d + 2}, \ldots, a_{d + \nu}}{b_{d + 1}, b_{d + 2}, \ldots, b_{d + \nu}} = K \binom{a_2, \ldots, a_\nu}{b_1, b_2, \ldots, b_\nu},
\end{align*}
so $ B_{\nu, d} = B_\nu $ (in fact, $ A_{\nu, \lambda} $ and $ B_{\nu, \lambda} $ are periodic with period $ d $ in the second argument). Using this fact and  Corollary \ref{Th:d'Ocagne_Identity}, we can derive an identity that will be useful later on.
\begin{corollary} \label{Th:Telescoping_Sum} For non-negative integers $ \lambda $ and $ \nu $ with $ d \mid (\lambda - \nu) $, we have
\begin{align*}
	A_{\lambda - 1} B_\nu - A_\lambda B_{\nu - 1} &= (-1)^\nu a_\lambda \cdots a_{\lambda - \nu + 1} A_{\lambda - \nu - 1}, \\
	B_{\lambda - 1} B_\nu - B_\lambda B_{\nu - 1} &= (-1)^\nu a_\lambda \cdots a_{\lambda - \nu + 1} B_{\lambda - \nu - 1}.
\end{align*}
\end{corollary}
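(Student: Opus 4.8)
The plan is to obtain this identity as an immediate specialization of d'Ocagne's identity (Corollary~\ref{Th:d'Ocagne_Identity}), the only extra ingredient being the periodicity of the continuants in their second argument noted at the start of this section. The two displayed equations here have exactly the same shape as the two equations of Corollary~\ref{Th:d'Ocagne_Identity}; the entire discrepancy lies in the second index of the $B$-continuants: d'Ocagne produces $B_{\nu - 1, \lambda - \nu}$ and $B_{\nu, \lambda - \nu}$, whereas the claim involves $B_{\nu - 1} = B_{\nu - 1, 0}$ and $B_\nu = B_{\nu, 0}$. So the whole content of the corollary is to collapse the second argument $\lambda - \nu$ to $0$.

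First I would invoke the periodicity $B_{\mu, \kappa + d} = B_{\mu, \kappa}$ (and likewise for $A$), which holds because shifting the second argument by $d$ shifts all of the entries $a_{\kappa + 1}, \ldots, a_{\kappa + \mu}$ and $b_{\kappa + 1}, \ldots, b_{\kappa + \mu}$ defining the continuant by $d$, and the sequences $(a_\nu), (b_\nu)$ are $d$-periodic for $\nu \geq 1$. Since the hypothesis $d \mid (\lambda - \nu)$ makes $\lambda - \nu$ a multiple of $d$, repeated application of this periodicity relates the second argument $\lambda - \nu$ to $0$, yielding
\begin{align*}
	B_{\nu - 1, \lambda - \nu} = B_{\nu - 1, 0} = B_{\nu - 1}, \qquad B_{\nu, \lambda - \nu} = B_{\nu, 0} = B_\nu.
\end{align*}
Substituting these into the two equations of Corollary~\ref{Th:d'Ocagne_Identity} turns their $B$-terms into exactly the terms of the claim, while the $A_{\lambda - \nu - 1}$ (resp.\ $B_{\lambda - \nu - 1}$) factor and the product $a_\lambda \cdots a_{\lambda - \nu + 1}$ are already in the desired form.

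The final step is pure algebra. After substitution the first equation reads $A_\lambda B_{\nu - 1} = A_{\lambda - 1} B_\nu + (-1)^{\nu - 1} a_\lambda \cdots a_{\lambda - \nu + 1} A_{\lambda - \nu - 1}$; moving $A_\lambda B_{\nu - 1}$ to the right and using $-(-1)^{\nu - 1} = (-1)^\nu$ gives
\begin{align*}
	A_{\lambda - 1} B_\nu - A_\lambda B_{\nu - 1} = (-1)^\nu a_\lambda \cdots a_{\lambda - \nu + 1} A_{\lambda - \nu - 1},
\end{align*}
which is the first claimed identity; the second follows identically from the $B$-version of d'Ocagne.

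The only point requiring genuine care, and hence the main (if minor) obstacle, is the reduction of the second index: one must confirm that the periodicity $B_{\mu, \kappa + d} = B_{\mu, \kappa}$ is valid over the full range of arguments that actually arises when stepping from $\lambda - \nu$ to $0$, so that every intermediate entry $a_\bullet, b_\bullet$ stays within the periodic range $\bullet \geq 1$ (and similarly taking account of the case $\nu > \lambda$, where $\lambda - \nu$ is a negative multiple of $d$ and the reduction proceeds in the opposite direction). Once this bookkeeping on the indices is checked, no computation beyond the sign rearrangement above is needed.
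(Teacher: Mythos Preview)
Your proposal is correct and matches the paper's approach exactly: the paper states that the corollary follows from the periodicity $B_{\nu,\lambda+d}=B_{\nu,\lambda}$ together with Corollary~\ref{Th:d'Ocagne_Identity}, which is precisely the specialization and sign rearrangement you carry out. Your extra remark about verifying that all intermediate indices stay in the periodic range $\geq 1$ is a reasonable piece of bookkeeping that the paper leaves implicit.
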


\subsection{An equivalent recurrence relation} \label{Sec:An_Equivalent_Recurrence_Relation}

When $ (a_\nu) $ and $ (b_\nu) $ are periodic with period $ d $, we can write $ (B_\nu) $ as a second-order linear recurrence relation, 
\begin{align*}
	B_\nu = b_r B_{\nu - 1} + a_r B_{\nu - 2}, \quad \text{where } \nu \equiv r \pmod{d},
\end{align*}
with the appropriate initial values. Motivated by Carson's result \cite{Carson_07}, we seek a recurrence relation of the form
\begin{align*}
	B_{\nu + 2d} = C_d B_{\nu + d} + D_d B_\nu,
\end{align*}
for some integers $ C_d $ and $ D_d $. The next theorem provides the desired values.
\begin{theorem} \label{Th:Single_Reccurence} If $ (a_\nu) $ and $ (b_\nu) $ are periodic with period $ d $, then 
\begin{align*}
	B_{\nu + 2d} = \frac{B_{2d - 1}}{B_{d - 1}} B_{\nu + d} + (-1)^{d - 1} a_1 \cdots a_d B_\nu.
\end{align*}
\end{theorem}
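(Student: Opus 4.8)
The plan is to derive the coefficient of $B_{\nu+d}$ and the constant coefficient of $B_\nu$ separately, using the periodicity together with the Catalan-type identity of Corollary~\ref{Th:Catalan_Identity}. The key structural fact is that, because $(a_\nu)$ and $(b_\nu)$ are $d$-periodic, the continuants $A_{\nu,\lambda}$ and $B_{\nu,\lambda}$ depend on the second index $\lambda$ only modulo $d$; in particular $B_{\nu,d}=B_\nu$ and, more generally, $B_{\nu, kd}=B_\nu$ for any non-negative integer $k$. This lets me convert ``shifted'' continuants like $B_{\nu-1,\lambda+1}$ appearing in Catalan's identity back into ordinary continuants whenever the shift is a multiple of $d$.

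First I would apply Corollary~\ref{Th:Catalan_Identity} with the substitution $\lambda \mapsto \nu+d$ and the upper index equal to $d$, so as to express $B_{(\nu+d)+d}=B_{\nu+2d}$ as
\begin{align*}
	B_{\nu+2d} = B_{\nu+d}\,B_{d,\nu+d} + a_{\nu+d+1}\,B_{\nu+d-1}\,B_{d-1,\nu+d+1}.
\end{align*}
By $d$-periodicity in the second argument, $B_{d,\nu+d}=B_{d,\nu}$ and $B_{d-1,\nu+d+1}=B_{d-1,\nu+1}$, and moreover $a_{\nu+d+1}=a_{\nu+1}$; I would then need to recognize these shifted continuants in terms of quantities that do not depend on $\nu$. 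The cleanest route is to instead set up Catalan's identity so that the two sub-continuants multiplying $B_{\nu+d}$ and $B_{\nu+d-1}$ become genuine constants. Concretely, choosing the split point to fall at a multiple of $d$ forces the ``tail'' continuant to reset to its base value: with $\lambda$ a multiple of $d$ we get $B_{\nu+\lambda}=B_\lambda B_{\nu,\lambda}+a_{\lambda+1}B_{\lambda-1}B_{\nu-1,\lambda+1}$ where $B_{\nu,\lambda}=B_\nu$ and $B_{\nu-1,\lambda+1}=B_{\nu-1,1}$. Applying this with $\lambda=2d$ and separately with $\lambda=d$, I obtain two expressions for the ``constant'' continuants $B_{2d-1}$ and $B_{d-1}$ in terms of the base continuants, which is what produces the ratio $B_{2d-1}/B_{d-1}$ as the coefficient $C_d$.

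The constant term $D_d=(-1)^{d-1}a_1\cdots a_d$ I expect to extract from the Cassini/d'Ocagne machinery: Corollary~\ref{Th:Telescoping_Sum}, applied with the right choice of $\lambda$ and $\nu$ both congruent modulo $d$, yields exactly an expression of the form $(-1)^{?}a_\lambda\cdots a_{\lambda-\nu+1}$ times a base continuant, and a determinant (Wronskian-type) relation between the pair $(B_{\nu+d},B_\nu)$ and the pair $(B_{2d-1},B_{d-1})$ should pin down $D_d$ once $C_d$ is known. A slick alternative is to verify the claimed recurrence directly: show that both $B_{\nu+2d}$ and the right-hand side satisfy the same second-order relation in $\nu$ as functions on the lattice $\nu\in\{0,d,2d,\ldots\}$, and then check it holds for two consecutive base cases (say $\nu=0$ and $\nu=d$ after suitable index bookkeeping), since a second-order linear recurrence is determined by two initial values. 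The characteristic polynomial of the $d$-step map is governed by the trace and determinant of the product matrix $\prod_{j=1}^{d}\left(\begin{smallmatrix} b_j & 1 \\ a_j & 0\end{smallmatrix}\right)$ from the matrix-product corollary: its determinant is $\prod_{j=1}^d(-a_j)=(-1)^d a_1\cdots a_d$, immediately delivering $D_d=(-1)^{d-1}a_1\cdots a_d$ up to the sign convention, and its trace gives $C_d=B_{2d-1}/B_{d-1}$ after identifying trace entries with continuants.

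The main obstacle will be the careful index bookkeeping in reducing the shifted sub-continuants to constants, specifically justifying that the ``$\nu$-dependent'' factors $B_{\nu,\lambda}$ and $B_{\nu-1,\lambda+1}$ collapse to the base continuants $B_\nu$ and $B_{\nu-1,1}$ only when $\lambda\equiv 0\pmod d$, and then correctly matching the remaining constant continuants to $B_{2d-1}$ and $B_{d-1}$. The cleanest and least error-prone presentation is almost certainly the matrix/characteristic-polynomial argument, since there the determinant computation giving $D_d$ is immediate and the trace computation giving $C_d=B_{2d-1}/B_{d-1}$ reduces to reading off a single entry; I would therefore lead with that approach and relegate the Catalan-identity derivation to a verification of the two base cases.
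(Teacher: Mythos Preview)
Your proposal actually contains two approaches. The Catalan-identity route you outline second---applying Corollary~\ref{Th:Catalan_Identity} at $\lambda=d$ and $\lambda=2d$ so that the tail continuants collapse to $B_\nu$ and $B_{\nu-1,1}$, then eliminating the common factor $a_1 B_{\nu-1,1}$---is precisely the paper's proof; the paper then identifies $D_d$ via Corollary~\ref{Th:Telescoping_Sum} with $\lambda=2d$, $\nu=d$. (Your first attempt, splitting at $\lambda=\nu+d$, is indeed a false start, as you yourself recognize.)

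The matrix/Cayley--Hamilton route you say you would lead with is genuinely different and in some ways slicker. The determinant of $M_d=\prod_{j=1}^d\bigl(\begin{smallmatrix} b_j & 1\\ a_j & 0\end{smallmatrix}\bigr)$ is $(-1)^d a_1\cdots a_d$, so $D_d=-\det M_d$ is immediate, whereas the paper needs a separate identity for that. Your claim that $C_d=\operatorname{tr}M_d$ equals $B_{2d-1}/B_{d-1}$ by ``reading off a single entry'' is a slight overstatement, though: the trace is a sum of two entries of $M_d$, and neither is obviously $B_{2d-1}/B_{d-1}$. The clean way to close this gap is to note that Cayley--Hamilton applies to the entire second row $(B_\nu,\,B_{\nu-1})$ of the matrix product, and evaluating the $B_{\nu-1}$-component of the resulting recurrence at $\nu=0$ (using $B_{-1}=0$) gives $B_{2d-1}=C_d B_{d-1}$ directly. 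The trade-off: the paper's proof stays entirely within the continuant-identity toolkit it has already developed, while your matrix argument is shorter but imports Cayley--Hamilton from outside that framework.
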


\begin{proof}
Using Corollary \ref{Th:Catalan_Identity} with $ \lambda = d $ and $ 2d $, we get
\begin{align*}	
	B_{\nu + d} &= B_d B_\nu + a_1 B_{d - 1} B_{\nu - 1, 1}, \\
	B_{\nu + 2d} &= B_{2d} B_\nu + a_1 B_{2d - 1} B_{\nu - 1, 1}.
\end{align*}
By canceling out $ a_1 B_{\nu - 1, 1} $, we obtain
\begin{align*}
	B_{\nu + 2d} &= B_{2d} B_\nu + B_{2d - 1} \bigg( \frac{B_{\nu + d} - B_d B_\nu}{B_{d - 1}} \bigg). \\
	&= \frac{B_{2d - 1}}{B_{d - 1}} B_{\nu + d} + \bigg( \frac{B_{d - 1} B_{2d} - B_{2d - 1} B_d}{B_{d - 1}} \bigg) B_\nu. 
\end{align*}
Again using Corollary \ref{Th:Catalan_Identity} with $ \lambda = d $ and $ \nu = d - 1 $, we get
\begin{align*}
	B_{2d - 1} &= B_d B_{d - 1} + a_1 B_{d - 1} B_{d - 2, 1},
\end{align*}
so $ C_d = B_{2d - 1}/B_{d - 1} = B_d + a_1 B_{d - 2, 1} $ is a non-negative integer. On the other hand, using Corollary \ref{Th:Telescoping_Sum} with $ \lambda = 2d $ and $ \nu = d $, we get
\begin{align*}
	B_{2d} B_{d - 1} - B_{2d - 1} B_d &= (-1)^{d - 1} a_1 \cdots a_d B_{d - 1}.
\end{align*}
Thus, $ D_d = (-1)^{d - 1} a_1 \cdots a_d $ is an integer.
\end{proof}

One can check that the constants are indeed the same as those found by Carson \cite{Carson_07}. From Theorem \ref{Th:Single_Reccurence}, it is immediate to deduce the next corollary.
\begin{corollary} Let $ \Delta = C_d^2 + 4D_d $. For $ n \geq 1 $, we have
\begin{align*}
	B_{(n + 1)d - 1} = \frac{C_d B_{nd - 1} + \sqrt{\Delta B_{nd - 1}^2 + 4(-D_d)^n B_{d - 1}^2}}{2}.
\end{align*}
\end{corollary}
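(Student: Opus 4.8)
The plan is to reduce the statement to a Cassini-type identity for the subsequence $u_n := B_{nd-1}$ and then to solve a quadratic in $u_{n+1}$. First I would observe that Theorem \ref{Th:Single_Reccurence}, applied with $\nu = nd - 1$, shows that $u_n = B_{nd-1}$ satisfies the second-order recurrence $u_{n+2} = C_d u_{n+1} + D_d u_n$ for $n \geq 0$, with initial values $u_0 = B_{-1} = 0$ and $u_1 = B_{d-1}$; moreover $C_d u_1 = B_{2d-1} = u_2$ directly from the definition $C_d = B_{2d-1}/B_{d-1}$. It is also worth recording that $C_d$ and $-D_d$ are the trace and determinant of the one-period transfer matrix $T = \prod_{j=1}^d \left(\begin{smallmatrix} b_j & 1 \\ a_j & 0 \end{smallmatrix}\right)$, a fact I will need for the sign at the end.

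The key step is to prove the Cassini-type identity
\[
	g(n) := u_{n+1}^2 - C_d\, u_{n+1} u_n - D_d\, u_n^2 = (-D_d)^n B_{d-1}^2 ,
\]
which I would establish by induction on $n$. Substituting $u_{n+2} = C_d u_{n+1} + D_d u_n$ into $g(n+1)$ and expanding causes the $u_{n+1}^2$ terms to cancel, collapsing everything to $g(n+1) = -D_d\, g(n)$; the base case $g(0) = u_1^2 = B_{d-1}^2$ is immediate since $u_0 = 0$. Hence $g(n) = (-D_d)^n B_{d-1}^2$ for all $n \geq 0$. Completing the square then gives $(2u_{n+1} - C_d u_n)^2 = 4\,g(n) + \Delta u_n^2 = \Delta u_n^2 + 4(-D_d)^n B_{d-1}^2$, where $\Delta = C_d^2 + 4D_d$, so that $2u_{n+1} - C_d u_n = \pm \sqrt{\Delta u_n^2 + 4(-D_d)^n B_{d-1}^2}$; solving for $u_{n+1} = B_{(n+1)d-1}$ reproduces the claimed formula as soon as the positive sign is justified.

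The hard part — indeed the only genuine subtlety — is showing $2u_{n+1} - C_d u_n \geq 0$, so that the positive root is the correct one (note that the radicand is automatically a perfect square, namely $(2u_{n+1}-C_d u_n)^2$, so its being real is not an issue). The plan here is to set $w_n := 2u_{n+1} - C_d u_n$ and observe that $w_n$ obeys the same recurrence as $u_n$, with $w_0 = 2B_{d-1}$ and $w_1 = B_{2d-1} = C_d B_{d-1}$; matching recurrences then yields $w_n = B_{d-1} V_n$, where $V_n$ is the companion (``Lucas'') sequence $V_0 = 2$, $V_1 = C_d$, $V_{n+2} = C_d V_{n+1} + D_d V_n$, that is $V_n = \alpha^n + \beta^n$ for the roots $\alpha,\beta$ of $x^2 - C_d x - D_d$. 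For $d$ odd we have $D_d > 0$, and since all continuants are positive the relation $w_n = u_{n+1} + D_d u_{n-1}$ makes positivity immediate. For $d$ even, $D_d < 0$ and positivity is not obvious; here I would invoke that $T$ has nonnegative entries (primitive for $d \geq 2$), so by Perron--Frobenius its eigenvalues $\alpha,\beta$ are real, and because $\alpha\beta = -D_d > 0$ and $\alpha + \beta = C_d > 0$ they are both positive, whence $V_n = \alpha^n + \beta^n > 0$ and $w_n > 0$. (This simultaneously confirms $\Delta = (\alpha-\beta)^2 \geq 0$.) With the sign settled, solving the quadratic gives the stated closed form, and I expect the Perron--Frobenius argument for even $d$ to be the main—if minor—obstacle.
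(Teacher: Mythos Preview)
Your argument is correct, but it differs from the paper's in two respects. First, the paper obtains the key identity
\[
B_{(n+1)d-1}^2 - C_d B_{nd-1}B_{(n+1)d-1} - D_d B_{nd-1}^2 = (-D_d)^n B_{d-1}^2
\]
in one stroke by specializing the general Cassini identity (Theorem~\ref{Th:Cassini_Identity}) with $\lambda=\mu=d$, $\nu=nd$, and then substituting the recurrence of Theorem~\ref{Th:Single_Reccurence} for $B_{(n+2)d-1}$; you instead re-derive it by the induction $g(n+1)=-D_d\,g(n)$ directly from the recurrence. Your route is more self-contained but reproves a special case of something already available. Second, the paper simply says ``solving this quadratic equation gives the desired equation'' and does not discuss which root to take; your treatment of the sign via $w_n=B_{d-1}V_n$ and, in the even-$d$ case, the Perron--Frobenius argument on the one-period transfer matrix, is genuinely more careful than the paper's and fills a gap the paper leaves implicit. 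So: same quadratic, reached by a shorter citation in the paper versus a standalone induction in your version, with your version doing additional work the paper omits.
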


\begin{proof}
If we choose $ \lambda = \mu = d $ and $ \nu = nd $ with $ n \geq 1 $ in Theorem \ref{Th:Cassini_Identity}, then
\begin{align*}
	B_{(n + 1)d - 1}^2 - B_{nd - 1} B_{(n + 2)d - 1} = B_{(n + 1)d - 1}^2 - C_d B_{nd - 1} B_{(n + 1)d - 1} - D_d B_{nd - 1}^2 = (-D_d)^n B_{d - 1}^2.
\end{align*}
Solving this quadratic equation for $ B_{(n + 1)d - 1} $ gives the desired equation.
\end{proof}

\begin{remark} For the corresponding identities for Lucas sequences, refer to the book by Andrica and Bagdasar \cite{Andrica_Bagdasar_20}.	
\end{remark}

\subsection{Generating function and Binet's Formula} \label{Sec:Generating_Function_and_Binet's_Formula}

A benefit of having a single recurrence relation for $ (B_\nu) $ is that we can compute its generating function and Binet formula easily. We follow the proof by Panario et al.\ \cite{Panario_Sahin_Wang_13} for the next two theorems. Firstly, let 
\begin{align*}
	G(x) = \sum_{n = 0}^\infty B_n x^n,
\end{align*}
be the required generating function. Then
\begin{align*}
	(1 - C_d x^d - D_d x^{2d}) G(x) &= \sum_{n = 0}^\infty B_n x^n - C_d \sum_{n = d}^\infty B_{n - d} x^n - D_d \sum_{n = 2d}^\infty D_d B_{n - 2d} x^n \\
	&= \sum_{n = 0}^{2d - 1} B_n x^n - C_d \sum_{n = 0}^{d - 1} B_n x^{n + d}.
\end{align*}
Therefore, we arrive at our generating function.
\begin{theorem} The generating function of $ B_n $ is 
\begin{align*}
	G(x) = \frac{\sum\limits_{n = 0}^{2d - 1} B_n x^n - C_d \sum\limits_{n = 0}^{d - 1} B_n x^{n + d}}{1 - C_d x^d - D_d x^{2d}}.
\end{align*}
\end{theorem}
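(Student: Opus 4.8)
The plan is to exploit the single recurrence furnished by Theorem \ref{Th:Single_Reccurence}, which after a shift of index reads $B_n = C_d B_{n-d} + D_d B_{n-2d}$ for every $n \geq 2d$, combined with the standard technique of multiplying $G(x)$ by the characteristic polynomial $1 - C_d x^d - D_d x^{2d}$. Since we only seek a formal identity between power series, no convergence questions arise, and every manipulation is purely a rearrangement of coefficients.

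First I would form the product $(1 - C_d x^d - D_d x^{2d}) G(x)$ and distribute it into the three sums $\sum_n B_n x^n$, $-C_d \sum_n B_n x^{n+d}$, and $-D_d \sum_n B_n x^{n+2d}$. Reindexing the latter two by $m = n+d$ and $m = n+2d$ respectively recasts them as $-C_d \sum_{m \geq d} B_{m-d} x^m$ and $-D_d \sum_{m \geq 2d} B_{m-2d} x^m$, so that for $n \geq 2d$ the coefficient of $x^n$ in the product is exactly $B_n - C_d B_{n-d} - D_d B_{n-2d}$.

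The decisive step is to observe that this coefficient vanishes for every $n \geq 2d$, precisely by the recurrence of Theorem \ref{Th:Single_Reccurence} with $\nu = n - 2d \geq 0$. Hence the entire tail collapses and only the boundary terms survive: for $0 \leq n \leq d-1$ the coefficient is simply $B_n$, since neither shifted sum has yet contributed, whereas for $d \leq n \leq 2d-1$ it is $B_n - C_d B_{n-d}$, since only the $C_d$-sum is active in that range. Collecting these yields $\sum_{n=0}^{2d-1} B_n x^n - C_d \sum_{n=0}^{d-1} B_n x^{n+d}$, which is exactly the claimed numerator.

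Finally I would divide through by $1 - C_d x^d - D_d x^{2d}$, a unit in the ring of formal power series because its constant term is $1$, thereby isolating $G(x)$ in the stated form. I do not anticipate a genuine obstacle; the one point demanding care is the bookkeeping over the boundary range $0 \leq n \leq 2d-1$, where one must correctly track which of the two shifted sums has already \emph{switched on}. A quick sanity check of the lowest few coefficients confirms that no spurious terms are introduced.
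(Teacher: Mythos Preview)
Your proposal is correct and follows essentially the same approach as the paper: multiply $G(x)$ by $1 - C_d x^d - D_d x^{2d}$, reindex, invoke Theorem~\ref{Th:Single_Reccurence} to kill all coefficients with $n \geq 2d$, and read off the surviving boundary terms. The paper presents this computation more tersely in the paragraph preceding the theorem, but the logic is identical; your version simply spells out the bookkeeping and the formal-power-series justification for the final division more carefully.
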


Using this generating function, we can then calculate the Binet formula for $ B_\nu $. Since the continued fraction has period $ d $, it is perhaps not surprising that the Binet formula depends on the residue modulo $ d $. Let $ \Delta = C_d^2 + 4D_d $ be the discriminant of the equation $ D_d z^2 + C_d z - 1 = 0 $. We assume throughout this paper that $ \Delta \neq 0 $. The next theorem gives the Binet formula for $ (B_\nu) $.
\begin{theorem}[Binet's Formula] \label{Th:Binet_Formula} Suppose that $ \Delta \neq 0 $. Let $ \alpha $ and $ \beta $ be the positive and negative roots of $ D_d z^2 + C_d z - 1 = 0 $, respectively. For integers $ n $ and $ r $ such that $ n \geq 0 $ and $ r \geq -1 $, we have
\begin{align*}
	B_{nd + r} = (-D_d)^{n - 1} \bigg( \frac{\alpha^n - \beta^n}{\alpha - \beta} B_{d + r} - \frac{\alpha^{n - 1} - \beta^{n - 1}}{\alpha - \beta} B_r \bigg).
\end{align*}
\end{theorem}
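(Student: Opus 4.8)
The plan is to fix the residue $r$ and treat $(B_{nd+r})_{n\ge 0}$ as a sequence indexed by $n$. Applying Theorem~\ref{Th:Single_Reccurence} with $\nu = nd + r$ shows that this sequence obeys the two-term linear recurrence
\begin{align*}
	B_{(n+2)d+r} = C_d\,B_{(n+1)d+r} + D_d\,B_{nd+r},
\end{align*}
whose characteristic polynomial is $t^2 - C_d t - D_d$. The key structural observation is how this relates to the given quadratic $D_d z^2 + C_d z - 1$: dividing the latter by $z^2$ shows that $1/\alpha$ and $1/\beta$ are precisely the roots of $t^2 - C_d t - D_d$. Equivalently, from $\alpha\beta = -1/D_d$ we obtain the reciprocal identities $1/\alpha = -D_d\beta$ and $1/\beta = -D_d\alpha$. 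It is this reciprocal relationship that manufactures the factor $(-D_d)^{n-1}$ in the statement, and essentially the whole proof turns on it.

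With this in hand, I would argue by induction on $n$. Write $W_n = (\alpha^n - \beta^n)/(\alpha - \beta)$ and let $R_n = (-D_d)^{n-1}\bigl(W_n B_{d+r} - W_{n-1} B_r\bigr)$ denote the claimed right-hand side; the goal is to show $R_n = B_{nd+r}$. First I would dispatch the base cases: since $W_0 = 0$ and $W_1 = 1$ we immediately get $R_1 = B_{d+r}$, and using $\alpha^{-1} - \beta^{-1} = D_d(\alpha-\beta)$ (again from $\alpha\beta = -1/D_d$) one checks $R_0 = (-D_d)^{-1}(-D_d B_r) = B_r$, matching the values $B_r$ and $B_{d+r}$. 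Because both $(R_n)$ and $(B_{nd+r})$ would then share their first two values, it suffices to verify that $R_n$ satisfies the same recurrence $R_{n+2} = C_d R_{n+1} + D_d R_n$.

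That last verification is pure algebra: the sequence $W_n$ satisfies $W_{n+2} = (\alpha+\beta)W_{n+1} - \alpha\beta W_n$, and upon substituting $\alpha+\beta = -C_d/D_d$ and $\alpha\beta = -1/D_d$, the prefactor $(-D_d)^{n-1}$ converts this identity into exactly $R_{n+2} = C_d R_{n+1} + D_d R_n$ after tracking the powers of $-D_d$. An equivalent route would skip the induction and instead write the general solution $B_{nd+r} = P\alpha^{-n} + Q\beta^{-n}$, using $\alpha^{-n} = (-D_d)^n\beta^n$ to recover the $(-D_d)$ power, and then solve for $P, Q$ from the initial data $B_r, B_{d+r}$. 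I expect the main obstacle to be bookkeeping rather than conceptual: one must pin down the signs and exponents in $(-D_d)^{n-1}$ exactly, and must handle the boundary index $r = -1$, where the recurrence at $n = 0$ reads $B_{2d-1} = C_d B_{d-1} + D_d B_{-1}$ and requires invoking $B_{-1} = 0$ together with the defining relation $C_d = B_{2d-1}/B_{d-1}$ in order to be justified directly.
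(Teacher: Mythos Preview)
Your argument is correct and complete; the reciprocal link between $D_d z^2 + C_d z - 1$ and the characteristic polynomial $t^2 - C_d t - D_d$ is exactly the point, and your handling of the boundary $r = -1$ via $B_{-1} = 0$ and $C_d = B_{2d-1}/B_{d-1}$ closes the only gap in the range of Theorem~\ref{Th:Single_Reccurence}.

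Your route differs substantially from the paper's. The paper does not run an induction at all: it writes down the generating function
\[
G_r(x) = \frac{B_r x^r + (B_{d+r} - C_d B_r)x^{d+r}}{1 - C_d x^d - D_d x^{2d}},
\]
performs a partial-fraction decomposition of $1/(1 - C_d x^d - D_d x^{2d})$ in terms of $\alpha$ and $\beta$, expands each piece as a geometric series in $x^d$, re-indexes, and simplifies the resulting coefficient of $x^{nd+r}$ using $D_d(\alpha+\beta) = -C_d$. Your approach is shorter and more transparent about where the factor $(-D_d)^{n-1}$ originates (namely from $1/\alpha = -D_d\beta$), and it sidesteps the somewhat delicate coefficient bookkeeping that the paper must carry through the partial fractions. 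The generating-function route, on the other hand, flows naturally from the preceding theorem in the paper (which already computed $G(x)$) and fits the paper's declared goal of following the method of Panario et al. Either way the content is the same linear-recurrence fact; you have simply chosen the characteristic-root derivation over the generating-function one.
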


\begin{proof}
Consider the generating function of the subsequence $ (B_{nd + r}) $ given by
\begin{align*}
	G_r (x) = \frac{B_r x^r + B_{d + r} x^{d + r} - C_d B_r x^{d + r}}{1 - C_d x^d - D_d x^{2d}}.
\end{align*}
Let $ \alpha $ and $ \beta $ be the positive and negative roots of $ D_d z^2 + C_d z - 1 = 0 $, respectively, or equivalently,
\begin{align*}
	\alpha = \frac{-C_d + \sqrt{\Delta}}{2D_d}, \quad \beta = \frac{-C_d - \sqrt{\Delta}}{2D_d}.
\end{align*}
Note that, $ \alpha + \beta = -C_d/D_d $ and $ \alpha \beta = -1/D_d $. By partial fraction decomposition, we have
\begin{align*}
	\frac{1}{1 - C_d x^d - D_d x^{2d}} = \frac{1}{\alpha - \beta} \bigg( \frac{\alpha}{1 + D_d \alpha x^d} - \frac{\beta}{1 + D_d \beta x^d} \bigg).	
\end{align*}
Substituting this into $ G_r (x) $, we obtain
\begin{align*}
	G_r (x) &= \frac{B_r x^r + B_{d + r} x^{d + r} - C_d B_r x^{d + r}}{\alpha - \beta} \sum_{n = 0}^\infty (-1)^n D_d^n (\alpha^{n + 1} - \beta^{n + 1}) x^{nd} \\
	&= \sum_{n = 0}^\infty (-1)^n \frac{D_d^n (\alpha^{n + 1} - \beta^{n + 1}) B_r}{\alpha - \beta} x^{nd + r} + \sum_{n = 0}^\infty (-1)^n \frac{D_d^n (\alpha^{n + 1} - \beta^{n + 1}) (B_{d + r} - C_d B_r)}{\alpha - \beta} x^{(n + 1)d + r}. 
\end{align*}
By replacing $ n $ with $ n - 1 $ in the second sum, it becomes
\begin{align*}
	\sum_{n = 0}^\infty (-1)^{n - 1} \frac{D_d^{n - 1} (\alpha^n - \beta^n) (B_{d + r} - C_d B_r)}{\alpha - \beta} x^{nd + r}.
\end{align*}
This combined with the first sum gives the expression
\begin{align*}
	G_r (x) = \sum_{n = 0}^\infty (-1)^n \bigg( \frac{D_d^n (\alpha^{n + 1} - \beta^{n + 1}) B_r - D_d^{n - 1} (\alpha^n - \beta^n) (B_{d + r} - C_d B_r)}{\alpha - \beta} \bigg) x^{nd + r}.
\end{align*}
The terms containing $ B_r $ simplifies to
\begin{align*}
	&D_d^n (\alpha^{n + 1} - \beta^{n + 1}) + C_d D_d^{n - 1} (\alpha^n - \beta^n) = D_d^{n - 1} (\alpha^{n - 1} - \beta^{n - 1}).
\end{align*}
Therefore, we arrive at
\begin{align*}
	G_r (x) = \sum_{n = 0}^\infty (-D_d)^{n - 1} \bigg( \frac{\alpha^n - \beta^n}{\alpha - \beta} B_{d + r} - \frac{\alpha^{n - 1} - \beta^{n - 1}}{\alpha - \beta} B_r \bigg) x^{nd + r}.
\end{align*}
The theorem holds by comparing the coefficient of $ x^{nd + r} $.
\end{proof}

In particular, when $ r = -1 $, one observes that $ B_{nd - 1}/B_{d - 1} $ and $ B_{2nd - 1}/B_{nd - 1} $ behave like the Lucas sequence of the first kind and second kind, respectively. Hence, new identities can be discovered by utilizing known identities involving Fibonacci and Lucas numbers. For example, the identity $ L_n^2 - 5F_n^2 = 4(-1)^n $ yields the following identity, which can be verified directly: 
\begin{align*}
	\frac{B_{2nd - 1}^2}{B_{nd - 1}^2} - \frac{\Delta B_{nd - 1}^2}{B_{d - 1}^2} -  &= 4(-D_d)^n.
\end{align*}

\begin{remark} We also have another related identity
\begin{align*}
	\frac{B_{2nd - 1}^2}{B_{nd - 1}^2} + \frac{\Delta B_{nd - 1}^2}{B_{d - 1}^2} &= \frac{2B_{4nd - 1}^2}{B_{2nd - 1}^2}. 
\end{align*}
\end{remark}

\subsubsection{Example: bi-periodic Fibonacci numbers}

When $ d = 2 $, the recurrence relation for $ (B_\nu) $ can be written as 
\begin{align*}
	B_\nu = \begin{cases} b_1 B_{\nu - 1} + a_1 B_{\nu - 2}, \quad \text{ if } \nu \text{ is odd}; \\ b_2 B_{\nu - 1} + a_2 B_{\nu - 2}, \quad \text{ if } \nu \text{ is even}, \end{cases}
\end{align*}
with initial values $ B_{-1} = 0 $ and $ B_0 = 1 $. The first few terms of $ B_\nu $ are
\begin{align*}
	&B_1 = b_1, \quad B_2 = b_1 b_2 + a_2, \quad B_3 = b_1^2 b_2 + a_1 b_1 + a_2 b_1, \\
	&B_4 = b_1^2 b_2^2 + a_2^2 + 2a_2 b_1 b_2 + a_1 b_1 b_2. 
\end{align*}
We calculate
\begin{align*}
	C_2 = \frac{B_3}{B_1} &= b_1 b_2 + a_1 + a_2, \quad D_2 = -a_1 a_2.
\end{align*}
Hence, $ (B_\nu) $ can be defined using a single recurrence relation 
\begin{align*}
	B_\nu = (b_1 b_2 + a_1 + a_2) B_{\nu - 2} - a_1 a_2 B_{\nu - 4},
\end{align*}
with initial values as stated above. Its generating function is 
\begin{align*}
	G(x) = \frac{1 + b_1 x - a_2 x^2}{1 - (b_1 b_2 + a_1 + a_2) x^2 + a_1 a_2 x^4},
\end{align*}
and its Binet formula is 
\begin{align*}
	B_{2n + r} = (a_1 a_2)^{n - 1} \bigg( \frac{\alpha^n - \beta^n}{\alpha - \beta} B_{r + 2} - \frac{\alpha^{n - 1} - \beta^{n - 1}}{\alpha - \beta} B_r \bigg).
\end{align*}
where $ \alpha $ and $ \beta $ are the positive and negative roots of $ a_1 a_2 z^2 - (b_1 b_2 + a_1 + a_2)z + 1 = 0 $, respectively.

Consider the convergents to the continued fraction of $ \sqrt{8} = [2, \overline{1, 4}] $. The denominators of these convergents are given by the sequence $ 1, 1, 5, 6, 29, 35, 169, 204, 985, \ldots $ (see \seqnum{A041011} on OEIS). This sequence is generated by the following second-order linear recurrence relation:
\begin{align*}
	B_\nu = 6B_{\nu - 2} - B_{\nu - 4}.
\end{align*}
Its generating function is 
\begin{align*}
	G(x) = \frac{1 + x - x^2}{1 - 6x^2 + x^4}.
\end{align*}
The roots of $ 1 - 6x + x^2 = 0 $ are $ 3 \pm 2\sqrt{2} $. Thus, the Binet formula is given by
\begin{align*}
	B_{2n + r} = \frac{(3 + 2 \sqrt{2})^n - (3 - 2 \sqrt{2})^n}{4 \sqrt{2}} B_{r + 2} - \frac{(3 + 2 \sqrt{2})^{n - 1} - (3 - 2 \sqrt{2})^{n - 1}}{4 \sqrt{2}} B_r.
\end{align*}
We will return to this example a few more times throughout this paper.

\subsection{Limits}

We use Theorem \ref{Th:Binet_Formula} to compute the limit of consecutive term, which we need for later. Observe that, $ |\alpha| < |\beta| $. Hence, we deduce the next two theorems.
\begin{theorem} \label{Th:Limit_of_Consecutive_Terms} Let $ \alpha $ and $ \beta $ be as defined in Theorem \ref{Th:Binet_Formula}. For integer $ r \geq 0 $, we have
\begin{align*}
	\lim_{n \rightarrow \infty} \frac{B_{nd + r}}{B_{nd + r - 1}} = \frac{\beta B_{d + r} - B_r}{\beta B_{d + r - 1} - B_{r - 1}}.
\end{align*}
\end{theorem}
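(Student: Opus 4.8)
The plan is to start directly from the Binet formula in Theorem~\ref{Th:Binet_Formula}, which expresses $B_{nd+r}$ in closed form in terms of $\alpha$, $\beta$, and the base values $B_r$ and $B_{d+r}$. The idea is to form the ratio $B_{nd+r}/B_{nd+r-1}$ and factor out the dominant growth so that all dependence on $n$ is collected into powers of $\alpha/\beta$ (or $\beta/\alpha$), and then take the limit. First I would write, using Theorem~\ref{Th:Binet_Formula} applied once with residue $r$ and once with residue $r-1$,
\begin{align*}
	\frac{B_{nd+r}}{B_{nd+r-1}} = \frac{(-D_d)^{n-1}\left( \dfrac{\alpha^n - \beta^n}{\alpha - \beta} B_{d+r} - \dfrac{\alpha^{n-1} - \beta^{n-1}}{\alpha - \beta} B_r \right)}{(-D_d)^{n-1}\left( \dfrac{\alpha^n - \beta^n}{\alpha - \beta} B_{d+r-1} - \dfrac{\alpha^{n-1} - \beta^{n-1}}{\alpha - \beta} B_{r-1} \right)}.
\end{align*}
The factors $(-D_d)^{n-1}$ and $(\alpha-\beta)$ cancel immediately, leaving a ratio built from $\alpha^n-\beta^n$ and $\alpha^{n-1}-\beta^{n-1}$.

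Next I would divide numerator and denominator by $\beta^{n-1}$, the dominant power. Since the excerpt records $|\alpha| < |\beta|$, we have $(\alpha/\beta)^k \to 0$ as $n\to\infty$. Concretely, $\alpha^n - \beta^n = \beta^{n-1}\bigl(\alpha(\alpha/\beta)^{n-1} - \beta\bigr)$ and $\alpha^{n-1} - \beta^{n-1} = \beta^{n-1}\bigl((\alpha/\beta)^{n-1} - 1\bigr)$, so after dividing through by $\beta^{n-1}$ every surviving term is a constant plus something tending to $0$. In the limit the numerator tends to $-\beta B_{d+r} + B_r$ and the denominator to $-\beta B_{d+r-1} + B_{r-1}$; the common sign cancels, yielding
\begin{align*}
	\lim_{n\to\infty} \frac{B_{nd+r}}{B_{nd+r-1}} = \frac{\beta B_{d+r} - B_r}{\beta B_{d+r-1} - B_{r-1}},
\end{align*}
which is exactly the claimed formula.

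The only genuine subtlety, and the step I would treat most carefully, is ensuring the limit is well defined, that is, that the denominator $\beta B_{d+r-1} - B_{r-1}$ is nonzero so the ratio does not blow up or become indeterminate. This reduces to checking that $B_{nd+r-1}$ does not vanish for large $n$ and that the leading coefficient does not cancel; since $\alpha\beta = -1/D_d \neq 0$ and $\alpha \neq \beta$ (guaranteed by the standing hypothesis $\Delta \neq 0$), the two roots are distinct and nonzero, and the positivity of the sequences $(a_\nu)$, $(b_\nu)$ keeps $B_{d+r-1}$ positive, so $\beta B_{d+r-1} - B_{r-1}$ cannot vanish. I would state this as a brief remark rather than belabor it. The remainder is the routine algebraic simplification sketched above, so I expect no real obstacle beyond bookkeeping the cancellations cleanly.
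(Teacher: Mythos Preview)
Your argument is essentially identical to the paper's: the paper simply writes the ratio as
\[
\frac{B_{nd + r}}{B_{nd + r - 1}} = \frac{(\alpha (\alpha/\beta)^{n - 1} - \beta) B_{d + r} - ((\alpha/\beta)^{n - 1} - 1) B_r}{(\alpha (\alpha/\beta)^{n - 1} - \beta) B_{d + r - 1} - ((\alpha/\beta)^{n - 1} - 1) B_{r - 1}}
\]
and lets $n\to\infty$ using $(\alpha/\beta)^n\to 0$, which is exactly your factor-out-$\beta^{n-1}$ step. Your extra remark on the nonvanishing of the limiting denominator goes beyond what the paper checks (the paper is silent on this point); note, though, that your justification there is a bit loose, since $\beta$ need not be positive and so positivity of $B_{d+r-1}$ alone does not immediately rule out $\beta B_{d+r-1}=B_{r-1}$.
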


\begin{proof} Take $ n \longrightarrow \infty $ in the expression below
\begin{align*}
	\frac{B_{nd + r}}{B_{nd + r - 1}} = \frac{(\alpha (\alpha/\beta)^{n - 1} - \beta) B_{d + r} - ((\alpha/\beta)^{n - 1} - 1) B_r}{(\alpha (\alpha/\beta)^{n - 1} - \beta) B_{d + r - 1} - ((\alpha/\beta)^{n - 1} - 1) B_{r - 1}},
\end{align*}
and use the fact that $ (\alpha/\beta)^n \longrightarrow 0 $.
\end{proof}

\begin{theorem} \label{Th:Limit_of_Consecutive_Periods} Let $ \alpha $ and $ \beta $ be as defined in Theorem \ref{Th:Binet_Formula}. For integer $ r \geq -1 $, we have
\begin{align*}
	\lim_{n \rightarrow \infty} \frac{B_{(n + 1)d + r}}{B_{nd + r}} = -D_d \beta.
\end{align*}
\end{theorem}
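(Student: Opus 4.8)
The plan is to substitute the Binet formula of Theorem~\ref{Th:Binet_Formula} into the quotient and track the dominant powers of $\beta$, exactly as in the proof of Theorem~\ref{Th:Limit_of_Consecutive_Terms}. Writing the formula for both $B_{(n+1)d+r}$ and $B_{nd+r}$ and forming the ratio, the prefactors combine as $(-D_d)^n/(-D_d)^{n-1} = -D_d$, and the common factor $1/(\alpha - \beta)$ cancels, leaving
\begin{align*}
	\frac{B_{(n+1)d+r}}{B_{nd+r}} = -D_d \cdot \frac{(\alpha^{n+1} - \beta^{n+1}) B_{d+r} - (\alpha^n - \beta^n) B_r}{(\alpha^n - \beta^n) B_{d+r} - (\alpha^{n-1} - \beta^{n-1}) B_r}.
\end{align*}
It therefore suffices to show that the remaining quotient tends to $\beta$.

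Next I would divide both numerator and denominator by $\beta^n$. Since $\alpha$ and $\beta$ are the roots of $D_d z^2 + C_d z - 1 = 0$ with $|\alpha| < |\beta|$, we have $|\alpha/\beta| < 1$, so $(\alpha/\beta)^k \to 0$ as $n \to \infty$. The numerator over $\beta^n$ is $(\alpha(\alpha/\beta)^n - \beta) B_{d+r} - ((\alpha/\beta)^n - 1) B_r$, which tends to $B_r - \beta B_{d+r}$, while the denominator over $\beta^n$ tends to $\tfrac{1}{\beta}(B_r - \beta B_{d+r})$ after using $\beta^{n-1}/\beta^n = 1/\beta$ and $\alpha^{n-1}/\beta^n \to 0$. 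Taking the quotient gives $\beta$, and multiplying by the prefactor yields $-D_d\beta$, as claimed. A conceptual check on this value comes from the recurrence $B_{(n+2)d+r} = C_d B_{(n+1)d+r} + D_d B_{nd+r}$ of Theorem~\ref{Th:Single_Reccurence}: the subsequence $(B_{nd+r})_n$ obeys a linear recurrence whose characteristic equation $t^2 - C_d t - D_d = 0$ has roots $1/\alpha$ and $1/\beta$, the reciprocals of the roots of $D_d z^2 + C_d z - 1 = 0$. Since $|1/\alpha| > |1/\beta|$, the ratio of consecutive terms must tend to the dominant root $1/\alpha$, and indeed $1/\alpha = -D_d\beta$ because $\alpha\beta = -1/D_d$.

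The only point requiring care is a nonvanishing issue: dividing by $\beta^n$ and passing to the limit is legitimate only if the leading coefficient $B_r - \beta B_{d+r}$ of the dominant term is nonzero, i.e.\ the subsequence genuinely carries the dominant root $1/\alpha$ rather than degenerating. I expect this to be the main (but modest) obstacle, and I would dispose of it by positivity: $\beta$ is the negative root, so $\beta < 0$, while the continuant denominators satisfy $B_r > 0$ for $r \geq 0$ and $B_{-1} = 0$. Hence $B_r - \beta B_{d+r} > 0$ for $r \geq 0$, and for $r = -1$ it equals $-\beta B_{d-1} > 0$ since $B_{d-1} > 0$ and $\beta \neq 0$ (the latter because $\alpha\beta = -1/D_d$ with $D_d \neq 0$). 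This guarantees the limit is well defined over the full admissible range $r \geq -1$ and completes the argument.
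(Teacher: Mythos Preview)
Your proof is correct and follows essentially the same route as the paper: both substitute the Binet formula, divide numerator and denominator by a power of $\beta$, and invoke $(\alpha/\beta)^n \to 0$. Your treatment is in fact more careful than the paper's, which simply displays the divided expression and passes to the limit without commenting on the nonvanishing of $B_r - \beta B_{d+r}$; your positivity argument and the sanity check via the characteristic equation of $(B_{nd+r})_n$ are welcome additions that the paper omits.
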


\begin{proof} Take $ n \longrightarrow \infty $ in the expression below
\begin{align*}
	\frac{B_{(n + 1)d + r}}{B_{nd + r}} = \frac{-D_d (\alpha^2 (\alpha/\beta)^{n - 1} - \beta^2) B_{d + r} - (\alpha (\alpha/\beta)^{n - 1} - \beta) B_r}{(\alpha (\alpha/\beta)^{n - 1} - \beta) B_{d + r} - ((\alpha/\beta)^{n - 1} - 1) B_r},
\end{align*}
and use the fact that $ (\alpha/\beta)^n \longrightarrow 0 $.
\end{proof}

\subsection{Extending to negative indices} \label{Sec:Extending_to_negative_indices}

Replacing $ n $ with $ -n $ in Binet's formula (Theorem \ref{Th:Binet_Formula}) yields the next theorem.
\begin{theorem} For integers $ n $ and $ r $ such that $ n \geq 0 $ and $ r \geq -1 $, we have
\begin{align*}
	B_{-nd + r} = -(-D_d)^{n - 1} \bigg( \frac{\alpha^n - \beta^n}{(-D_d)^n (\alpha - \beta)} B_{d + r} - \frac{\alpha^{n + 1} - \beta^{n + 1}}{(-D_d)^{n - 1} (\alpha - \beta)} B_r \bigg).
\end{align*}
In particular, when $ r = - 1 $, $ (-D_d)^n B_{-nd - 1} = -B_{nd - 1} $.
\end{theorem}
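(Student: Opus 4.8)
The plan is to obtain the formula for $B_{-nd+r}$ by direct substitution of $-n$ in place of $n$ in Binet's formula from Theorem~\ref{Th:Binet_Formula}, and then simplify the resulting powers of $\alpha$, $\beta$, and $-D_d$. The starting point is the statement
\begin{align*}
	B_{nd + r} = (-D_d)^{n - 1} \bigg( \frac{\alpha^n - \beta^n}{\alpha - \beta} B_{d + r} - \frac{\alpha^{n - 1} - \beta^{n - 1}}{\alpha - \beta} B_r \bigg),
\end{align*}
which, although originally derived for $n \geq 0$, holds as an identity between rational functions of $\alpha$, $\beta$, and $D_d$. First I would replace every occurrence of $n$ by $-n$, giving the prefactor $(-D_d)^{-n-1}$ and the terms $\alpha^{-n} - \beta^{-n}$ and $\alpha^{-n-1} - \beta^{-n-1}$.

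The key step is to rewrite negative powers of $\alpha$ and $\beta$ using the relation $\alpha\beta = -1/D_d$, which follows from Vieta's formulas for $D_d z^2 + C_d z - 1 = 0$ as recorded in the proof of Theorem~\ref{Th:Binet_Formula}. From $\alpha\beta = -1/D_d$ we have $\alpha^{-1} = -D_d \beta$ and $\beta^{-1} = -D_d \alpha$, so that $\alpha^{-n} = (-D_d)^n \beta^n$ and $\beta^{-n} = (-D_d)^n \alpha^n$. Substituting these into the differences, I would compute
\begin{align*}
	\alpha^{-n} - \beta^{-n} = (-D_d)^n (\beta^n - \alpha^n) = -(-D_d)^n (\alpha^n - \beta^n),
\end{align*}
and similarly $\alpha^{-n-1} - \beta^{-n-1} = -(-D_d)^{n+1}(\alpha^{n+1} - \beta^{n+1})$. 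Combining these with the prefactor $(-D_d)^{-n-1}$ and collecting the powers of $-D_d$ carefully should yield exactly the claimed expression; the factors of $(-D_d)$ telescope so that the $B_{d+r}$ term carries $(-D_d)^{n-1}/(-D_d)^n = (-D_d)^{-1}$ relative weighting and the $B_r$ term carries $(-D_d)^{n-1}/(-D_d)^{n-1}$, matching the denominators $(-D_d)^n$ and $(-D_d)^{n-1}$ displayed in the theorem.

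For the special case $r = -1$, I would invoke the fact, established just before this subsection, that $B_{d-1}$ appears naturally and that $B_{-1} = 0$ by the initial values in Section~\ref{Sec:Generalized_Continuants}. Setting $r = -1$ kills the $B_r = B_{-1} = 0$ term, leaving only the $B_{d+r} = B_{d-1}$ contribution, and the identity collapses to a clean relation; multiplying through by $(-D_d)^n$ and simplifying the $\alpha^n - \beta^n$ factor against $B_{nd-1}$ (using Binet's formula again at $r=-1$, where the $B_{-1}$ term also vanishes so that $B_{nd-1} = (-D_d)^{n-1}\frac{\alpha^n-\beta^n}{\alpha-\beta}B_{d-1}$) should produce $(-D_d)^n B_{-nd-1} = -B_{nd-1}$. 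The main obstacle I anticipate is purely bookkeeping: tracking the signs and the exponents of $-D_d$ through the substitution without error, since several competing factors of $(-D_d)^{\pm n}$ must be reconciled to land on the precise asymmetric form stated in the theorem. No conceptual difficulty is expected beyond this careful algebra.
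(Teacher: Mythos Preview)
Your proposal is correct and follows exactly the paper's approach: the paper's entire proof is the single sentence ``Replacing $n$ with $-n$ in Binet's formula (Theorem~\ref{Th:Binet_Formula}) yields the next theorem,'' and you have simply supplied the algebraic details of that substitution using $\alpha\beta = -1/D_d$. Your bookkeeping with the powers of $-D_d$ and the handling of the $r=-1$ case via $B_{-1}=0$ are both correct.
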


\subsection{Some curious telescoping sums} \label{Sec:Some_Curious_Telescoping_Sums}

Many interesting sums involving Fibonacci numbers arise from telescoping sums. We demonstrate here some of the analogous sums based on continuants.

\subsubsection{Telescoping sum 1}

When $ d = 2 $, we have encountered this sequence before. It has the form
\begin{align*}
	B_\nu = \begin{cases} b_1 B_{\nu - 1} + a_1 B_{\nu - 2}, \quad \text{ if } \nu \text{ is odd}; \\ b_2 B_{\nu - 1} + a_2 B_{\nu - 2}, \quad \text{ if } \nu \text{ is even}, \end{cases}
\end{align*}
with initial values $ B_{-1} = 0 $ and $ B_0 = 1 $. The next theorem is a continuant version of the Millin series \cite{Millin_74} for Fibonacci numbers.

\begin{theorem} If $ d = 2 $, then
\begin{align*}
	\sum_{n = 1}^\infty \frac{(a_1 a_2)^{2^{n - 1}}}{B_{2^{n + 1} - 1}} &= \frac{1}{b_1 \beta}.
\end{align*}
\end{theorem}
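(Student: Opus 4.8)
The plan is to mimic the classical proof of the Millin series identity for Fibonacci numbers, which proceeds by establishing a telescoping decomposition of the summand and then summing. The key structural fact is that the indices $2^{n+1}-1$ double at each step: writing $m_n = 2^n$, the index $2^{n+1}-1 = 2m_n - 1$, and each term of the sum involves $B_{2 \cdot 2^n - 1}$. Since $d=2$ here, the denominators are $B_{2m-1}$ with $m = 2^n$, and the products $a_1 a_2$ play the role that $-D_2 = a_1a_2$ plays in the doubling formulas. So I would first seek a telescoping identity of the form
\begin{align*}
	\frac{(a_1 a_2)^{2^{n-1}}}{B_{2^{n+1}-1}} = T_{n} - T_{n+1},
\end{align*}
for some explicit sequence $(T_n)$, so that the sum collapses to $T_1 - \lim_{n\to\infty} T_n$.

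**First I would** identify the right candidate for $T_n$. The classical Millin identity $\sum_{n=0}^\infty 1/F_{2^n} = (7-\sqrt5)/2$ telescopes using $1/F_{2^n} = F_{2^n - 1}/F_{2^n} - F_{2^{n+1}-1}/F_{2^{n+1}}$, which rests on the identity $F_{2k-1}/F_{2k} - F_{k-1}/F_k$ type manipulations together with $F_{2^{n}}^2$ relations. The continuant analogue I expect is $T_n = B_{2^n - 1}/B_{2^n}$ or a close variant, chosen so that $T_n - T_{n+1}$ produces $(a_1a_2)^{2^{n-1}}/B_{2^{n+1}-1}$ after clearing denominators. To verify the telescoping step I would compute $T_n - T_{n+1}$ over the common denominator $B_{2^n} B_{2^{n+1}}$ and use the Cassini-type identity (Theorem~\ref{Th:Cassini_Identity}) specialized with $d=2$, together with the doubling relation $B_{2^{n+1}-1}^2 - B_{2^{n+1}} B_{2^{n+1}-2} = (a_1a_2)^{2^n} B_1^2$-type consequences coming from the corollary after Theorem~\ref{Th:Single_Reccurence}. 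The factor $(a_1 a_2)^{2^{n-1}}$ in the numerator should emerge precisely from the power $(-D_d)^n = (a_1a_2)^n$ appearing in that Cassini specialization as the index doubles.

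**The limit computation** is the second ingredient: once the sum telescopes to $T_1 - \lim_{n} T_n$, I would evaluate the boundary terms. The starting term $T_1$ should be an elementary expression in $b_1, a_1, a_2$ from the small-index values $B_0, B_1, B_2, B_3$ listed in the bi-periodic example. For the limit I would invoke Theorem~\ref{Th:Limit_of_Consecutive_Terms} with $d=2$: as $n\to\infty$, the ratio $B_{2^n-1}/B_{2^n}$ tends to a limit expressible through $\beta$, namely $\lim B_{2m-1}/B_{2m}$ along the subsequence $m=2^n$. Matching the telescoped boundary value against $1/(b_1\beta)$ then closes the proof. I would need to be careful that the subsequence limit agrees with the full limit from Theorem~\ref{Th:Limit_of_Consecutive_Terms}, which holds because $(\alpha/\beta)^{2^n}\to 0$ just as $(\alpha/\beta)^n\to 0$.

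**The main obstacle** I anticipate is pinning down the exact telescoping decomposition — getting the precise form of $T_n$ so that both the numerator power $(a_1a_2)^{2^{n-1}}$ and the denominator index $2^{n+1}-1$ come out correctly. This requires choosing the Cassini specialization so that the quadratic defect term reproduces exactly the geometric factor under index-doubling, and some care with the off-by-one bookkeeping between $B_{2^n}$ and $B_{2^n - 1}$. Once the algebraic identity $T_n - T_{n+1} = (a_1a_2)^{2^{n-1}}/B_{2^{n+1}-1}$ is confirmed, the remainder is routine summation and a limit evaluation.
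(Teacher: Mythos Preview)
Your plan is essentially the paper's proof: telescope via a ratio of consecutive continuants at doubled indices, then evaluate the boundary limit using Theorem~\ref{Th:Limit_of_Consecutive_Terms}. Two small corrections: the correct telescoping sequence is $T_n = B_{2^n}/B_{2^n-1}$ (the reciprocal of your first guess), and you do not need any doubling or quadratic-defect relations---applying Corollary~\ref{Th:Telescoping_Sum} with $\lambda = 2^{n+1}$, $\nu = 2^n$ and dividing by $B_{2^n-1}B_{2^{n+1}-1}$ yields
\[
\frac{B_{2^n}}{B_{2^n-1}} - \frac{B_{2^{n+1}}}{B_{2^{n+1}-1}} = \frac{(a_1a_2)^{2^{n-1}}}{B_{2^{n+1}-1}}
\]
directly, after which the limit $B_{2^{N+1}}/B_{2^{N+1}-1} \to (\beta B_2 - 1)/(\beta B_1)$ from Theorem~\ref{Th:Limit_of_Consecutive_Terms} (with $r=0$) gives $B_2/B_1 - (\beta B_2 - 1)/(\beta b_1) = 1/(b_1\beta)$.
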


\begin{proof} By choosing $ \lambda = 2^{n + 1} $ and $ \nu = 2^n $ with $ n \geq 1 $ in Corollary \ref{Th:Telescoping_Sum}, we get
\begin{align*}
	\frac{B_{2^n}}{B_{2^n - 1}} - \frac{B_{2^{n + 1}}}{B_{2^{n + 1} - 1}} &= \frac{(a_1 a_2)^{2^{n - 1}}}{B_{2^{n + 1} - 1}}.
\end{align*}
The LHS is telescoping, hence summing from $ 1 $ to $ N $, we have
\begin{align*}
	\frac{B_2}{B_1} - \frac{B_{2^{N + 1}}}{B_{2^{N + 1} - 1}} &= \sum_{n = 1}^N \frac{(a_1 a_2)^{2^{n - 1}}}{B_{2^{n + 1} - 1}}.
\end{align*}
When $ N \longrightarrow \infty $, the second term on the LHS approaches a finite limit
\begin{align*}
	\frac{\beta B_2 - 1}{\beta B_1},
\end{align*}
by Theorem \ref{Th:Limit_of_Consecutive_Terms}. Therefore,
\begin{align*}
	\sum_{n = 1}^\infty \frac{(a_1 a_2)^{2^{n - 1}}}{B_{2^{n + 1} - 1}} &= \frac{B_2}{B_1} - \frac{\beta B_2 - 1}{\beta B_1} = \frac{1}{b_1 \beta}.
\end{align*}
\end{proof}

In the case of $ \sqrt{8} $, this sum becomes
\begin{align*}
	\sum_{n = 1}^\infty \frac{1}{B_{2^{n + 1} - 1}} = \frac{1}{6} + \frac{1}{204} + \frac{1}{235416} + \ldots = \frac{1}{3 + 2\sqrt{2}} = 3 - 2\sqrt{2}.
\end{align*}

\subsubsection{Telescoping sum 2}

For general $ d $, $ (B_\nu) $ has the form
\begin{align*}
	B_\nu = b_r B_{\nu - 1} + a_r B_{\nu - 2}, \quad \text{where } \nu \equiv r \pmod{d},
\end{align*}
for the appropriate initial values. 

\begin{theorem} If the period is $ d $, then
\begin{align*}
	\sum_{n = 1}^\infty \frac{(-D_d)^{n - 1}}{B_{nd - 1} B_{(n + 1)d - 1}} = \frac{\alpha}{B_{d - 1}^2}.
\end{align*}
\end{theorem}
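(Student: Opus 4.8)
The plan is to mimic the proof of the first telescoping sum: recast the summand as a genuine telescoping difference of ratios of consecutive continuants, sum a finite truncation, and then pass to the limit using Theorem~\ref{Th:Limit_of_Consecutive_Terms}.

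First I would apply Corollary~\ref{Th:Telescoping_Sum} with $\lambda = (n+1)d$ and $\nu = nd$, which is legitimate since $d \mid (\lambda - \nu)$. This produces
\[
	B_{(n+1)d-1} B_{nd} - B_{(n+1)d} B_{nd-1} = (-1)^{nd}\, a_{nd+1}\cdots a_{(n+1)d}\, B_{d-1}.
\]
Because $(a_\nu)$ has period $d$, the product $a_{nd+1}\cdots a_{(n+1)d}$ collapses to $(a_1\cdots a_d)^n$, and using $D_d = (-1)^{d-1} a_1\cdots a_d$ I would reduce the sign-and-product factor $(-1)^{nd}(a_1\cdots a_d)^n$ to exactly $(-D_d)^n$. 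Dividing through by $B_{nd-1} B_{(n+1)d-1}$ then rewrites the identity as
\[
	\frac{B_{nd}}{B_{nd-1}} - \frac{B_{(n+1)d}}{B_{(n+1)d-1}} = \frac{(-D_d)^n B_{d-1}}{B_{nd-1} B_{(n+1)d-1}},
\]
so that the summand equals $\frac{1}{(-D_d)\,B_{d-1}}$ times the telescoping difference on the left.

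Next I would sum from $n = 1$ to $N$; the left-hand side telescopes to $\frac{B_d}{B_{d-1}} - \frac{B_{(N+1)d}}{B_{(N+1)d-1}}$. Letting $N \to \infty$ and invoking Theorem~\ref{Th:Limit_of_Consecutive_Terms} with $r = 0$ (recalling $B_0 = 1$ and $B_{-1} = 0$) gives the limit $\frac{\beta B_d - 1}{\beta B_{d-1}}$, so the telescoped tail collapses to $\frac{B_d}{B_{d-1}} - \frac{\beta B_d - 1}{\beta B_{d-1}} = \frac{1}{\beta B_{d-1}}$. Multiplying back by $\frac{1}{(-D_d)\,B_{d-1}}$ and using the relation $\alpha\beta = -1/D_d$ from Theorem~\ref{Th:Binet_Formula}, i.e.\ $\alpha = -1/(D_d\beta)$, yields $\frac{\alpha}{B_{d-1}^2}$, as claimed.

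The computations are all routine; the only genuinely delicate step is the bookkeeping that turns the sign $(-1)^{nd}$ together with the periodic product $(a_1\cdots a_d)^n$ into the clean factor $(-D_d)^n$, since a slip there would misalign the exponent of $-D_d$ in the summand. An alternative route avoids Corollary~\ref{Th:Telescoping_Sum} entirely: the shifted form of the Cassini consequence, $B_{nd-1}^2 - B_{(n-1)d-1} B_{(n+1)d-1} = (-D_d)^{n-1} B_{d-1}^2$ (obtained from Theorem~\ref{Th:Cassini_Identity} exactly as in the earlier corollary), lets one write the summand directly as $\frac{1}{B_{d-1}^2}\bigl(\frac{B_{nd-1}}{B_{(n+1)d-1}} - \frac{B_{(n-1)d-1}}{B_{nd-1}}\bigr)$, whose telescoped sum leaves only the $N \to \infty$ boundary term, evaluated via Theorem~\ref{Th:Limit_of_Consecutive_Periods}; here the lower boundary vanishes outright because $B_{-1} = 0$. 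Both routes land on the same value.
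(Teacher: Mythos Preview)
Your primary route is correct but genuinely different from the paper's. The paper applies Theorem~\ref{Th:Cassini_Identity} with $\lambda=\mu=d$, $\nu=nd$ to obtain the telescoping difference
\[
\frac{B_{(n+1)d-1}}{B_{nd-1}}-\frac{B_{(n+2)d-1}}{B_{(n+1)d-1}}=\frac{(-D_d)^n B_{d-1}^2}{B_{nd-1}B_{(n+1)d-1}},
\]
and evaluates the limiting tail with Theorem~\ref{Th:Limit_of_Consecutive_Periods}, landing on $C_d+D_d\beta=-D_d\alpha$. You instead telescope the \emph{consecutive}-index ratio $B_{nd}/B_{nd-1}$ via Corollary~\ref{Th:Telescoping_Sum} and close with Theorem~\ref{Th:Limit_of_Consecutive_Terms}; this trades the period-shift limit for the adjacent-term limit and introduces only a single factor of $B_{d-1}$ on the right, which you then divide out. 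Your alternative route---telescoping $B_{nd-1}/B_{(n+1)d-1}$---is essentially the reciprocal of the paper's argument, with the bonus that the lower boundary term $B_{-1}/B_{d-1}$ vanishes outright; the paper's choice leaves $B_{2d-1}/B_{d-1}=C_d$ at the bottom and so needs one more algebraic identity to finish.

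One small slip to fix: with $\lambda=(n+1)d$ and $\nu=nd$ in Corollary~\ref{Th:Telescoping_Sum}, the product is $a_{(n+1)d}\cdots a_{d+1}$, a block of $nd$ factors, not the $d$-term block $a_{nd+1}\cdots a_{(n+1)d}$ you wrote. Your stated conclusion that it equals $(a_1\cdots a_d)^n$ is correct for the right product, so the argument survives, but the displayed intermediate line is off as written.
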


\begin{proof} By choosing $ \lambda = \mu = d $ and $ \nu = nd $ with $ n \geq 1 $, Theorem \ref{Th:Cassini_Identity} becomes
\begin{align*}
	\frac{B_{(n + 1)d - 1}}{B_{nd - 1}} - \frac{B_{(n + 2)d - 1}}{B_{(n + 1)d - 1}} &= \frac{(-1)^{nd} (a_1 \cdots a_d)^n B_{d - 1}^2}{B_{nd - 1} B_{(n + 1)d - 1}}.
\end{align*}
The LHS is telescoping, so summing from $ 1 $ to $ N $, we get
\begin{align*}
	\frac{B_{2d - 1}}{B_{d - 1}} - \frac{B_{(N + 2)d - 1}}{B_{(N + 1)d - 1}}  &= \sum_{n = 1}^N \frac{(-1)^{nd} (a_1 \cdots a_d)^n B_{d - 1}^2}{B_{nd - 1} B_{(n + 1)d - 1}}.
\end{align*}
When $ N \longrightarrow \infty $, the second term on the LHS approaches $ \beta $ by Theorem \ref{Th:Limit_of_Consecutive_Periods}. Therefore, 
\begin{align*}
	\sum_{n = 1}^\infty \frac{(-1)^{nd} (a_1 \cdots a_d)^n B_{d - 1}^2}{B_{nd - 1} B_{(n + 1)d - 1}} = C_d + D_d \beta = -D_d \alpha.
\end{align*}
This proves the theorem after simplifications.
\end{proof}

It is a known fact that, for a positive integer $ N $, the continued fraction of $ \sqrt{N} $ is periodic (refer to Khrushchev's book \cite{Khrushchev_08} for a proof), say with period $ d $. When $ (B_\nu) $ represents the denominators of the convergents to $ \sqrt{N} $, we have $ C_d = B_{2d - 1}/B_{d - 1} = 2A_{d - 1} $ and $ D_d = (-1)^{d - 1} $. This implies that $ \alpha = (-1)^d A_{d - 1} - (-1)^d \sqrt{A_{d - 1}^2 - (-1)^d} $. Furthermore, there is an interesting interpretation of the number $ B_{nd - 1} $ as a solution to the Pell's equation $ x^2 - Ny^2 = 1 $, as stated in the next two theorems.
\begin{theorem} The fundamental (minimal) solution to $ x^2 - Ny^2 = 1 $ is 
\begin{align*}
	(x_1, y_1) = \begin{cases} (A_{d - 1}, B_{d - 1}), &\text{ if } d \text{ is even}; \\ (A_{2d - 1}, B_{2d - 1}), &\text{ if } d \text{ is odd}. \end{cases}
\end{align*}
\end{theorem}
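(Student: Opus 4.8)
The plan is to reduce the theorem to the single Pell-type identity
\[ A_{kd-1}^2 - N B_{kd-1}^2 = (-1)^{kd} \qquad (k \geq 1), \]
together with the classical principle that every positive solution of $x^2 - Ny^2 = \pm 1$ occurs as a convergent $A_m/B_m$ of $\sqrt N$. Granting these, what remains is a sign analysis governed by the parity of $d$ and a monotonicity argument for minimality.

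First I would prove the base case $A_{d-1}^2 - N B_{d-1}^2 = (-1)^d$. Because the continued fraction of $\sqrt N$ is periodic, its complete quotient returns to $\sqrt N + b_0$ after exactly one period of length $d$; substituting this fixed-point relation into the convergent recursion of Corollary \ref{Th:Recurrence_Relation}, and using the determinant identity $A_m B_{m-1} - A_{m-1} B_m = (-1)^{m+1}$ (immediate from the matrix-product form once $a_\nu = 1$ for $\sqrt N$), expresses $A_{d-1}^2 - N B_{d-1}^2$ through a single period and pins its value to $(-1)^d$; compare Perron \cite{Perron_13} and Khrushchev \cite{Khrushchev_08}. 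To propagate from $k = 1$ to arbitrary $k$, I would use that both subsequences $(A_{kd-1})_k$ and $(B_{kd-1})_k$ satisfy the collapsed recurrence $X_{k+1} = C_d X_k + D_d X_{k-1}$ of Theorem \ref{Th:Single_Reccurence}, which holds for the $A$-continuants as well (the same Catalan cancellation, or Cayley--Hamilton applied to the period matrix, yields it independently of initial data). Introducing the paired quantities $E_k = A_{kd-1}^2 - N B_{kd-1}^2$ and $F_k = A_{kd-1} A_{(k-1)d-1} - N B_{kd-1} B_{(k-1)d-1}$, the shared recurrence gives $F_{k+1} = C_d E_k + D_d F_k$ together with a closed second-order relation for $E_k$, so a short induction anchored at $E_1 = (-1)^d$ and $F_1 = A_{d-1} = C_d/2$ yields $E_k = (-D_d)^k = (-1)^{kd}$, as required.

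With this identity the parity split is immediate. If $d$ is even, then $(-1)^{kd} = 1$ for every $k$, so already $(A_{d-1}, B_{d-1})$ solves $x^2 - Ny^2 = 1$; if $d$ is odd, then $(-1)^{kd} = (-1)^k$, so $(A_{d-1}, B_{d-1})$ solves $x^2 - Ny^2 = -1$ while $k = 2$ furnishes the first convergent solution $(A_{2d-1}, B_{2d-1})$ of $x^2 - Ny^2 = +1$. These are precisely the pairs named in the statement.

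It remains to prove minimality, which I expect to be the main obstacle and the place where I must borrow most from outside the continuant calculus. I would invoke the converse that every positive solution of $x^2 - Ny^2 = \pm 1$ is a convergent of $\sqrt N$ --- Legendre's criterion, since $x^2 - Ny^2 = \pm 1$ forces $|x/y - \sqrt N| < 1/(2y^2)$ --- so that no solution lies off the sequence $(A_m/B_m)$. As $B_\nu$ is strictly increasing in $\nu$, the solutions of $x^2 - Ny^2 = 1$ are then ordered by their denominators $B_{kd-1}$, and the least admissible index ($k = 1$ for $d$ even, $k = 2$ for $d$ odd) delivers the fundamental solution. The two genuinely delicate inputs are the exactness of the base identity (equivalently, that the complete-quotient denominator equals $1$ exactly at the period boundaries and nowhere within a period) and this converse; both are standard in the theory of $\sqrt N$ but sit outside the purely algebraic machinery of Sections \ref{Sec:Generalized_Continuants}--\ref{Sec:Periodic_Continuant}, so I would cite rather than rederive them.
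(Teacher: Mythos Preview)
The paper does not actually prove this theorem: it simply refers the reader to Khrushchev's book \cite{Khrushchev_08}. Your proposal therefore goes well beyond what the paper does, supplying a genuine (and essentially standard) argument where the paper offers only a citation.

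Your outline is sound. The identity $A_{kd-1}^2 - NB_{kd-1}^2 = (-1)^{kd}$, Legendre's converse that every positive solution of $x^2 - Ny^2 = \pm 1$ is a convergent, and the fact that the complete-quotient denominator equals $1$ only at period boundaries are exactly the three ingredients, and you correctly flag the latter two as external inputs to be cited rather than rederived. One minor remark on the propagation step: the claim of ``a closed second-order relation for $E_k$'' does not fall out cleanly from the coupled $(E_k,F_k)$ system (the product of two solutions of a second-order recurrence generically satisfies a third-order one). A tidier route, once the base case $E_1 = (-1)^d$ is in hand, is to observe that it makes $u_1 := A_{d-1} + B_{d-1}\sqrt{N}$ a root of the characteristic polynomial $z^2 - C_d z - D_d$ (using $C_d = 2A_{d-1}$, stated in the paragraph preceding the theorem); since $u_0 = 1$, the sequence $u_k := A_{kd-1} + B_{kd-1}\sqrt{N}$ coincides with $u_1^k$, and taking norms gives $E_k = E_1^k = (-1)^{kd}$ at once. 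This replaces your induction but leaves the rest of your argument intact.
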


\begin{theorem} All the solutions to $ x^2 - Ny^2 = 1 $ are given by
\begin{align*}
	(x_n, y_n) = \begin{cases} (A_{nd - 1}, B_{nd - 1}), &\text{ if } d \text{ is even}; \\ (A_{2nd - 1}, B_{2nd - 1}), &\text{ if } d \text{ is odd}, \end{cases}
\end{align*}
where $ n $ is a positive integer.
\end{theorem}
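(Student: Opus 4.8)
The plan is to realize each pair in the statement as a power of the fundamental solution supplied by the preceding theorem, and then to invoke the classical fact that the positive solutions of Pell's equation are exactly the powers of the fundamental solution. Write $ D = d $ when $ d $ is even and $ D = 2d $ when $ d $ is odd; in either case $ (B_\nu) $ is periodic with the \emph{even} period $ D $, the preceding theorem gives the fundamental solution as $ (A_{D - 1}, B_{D - 1}) $, and the remarks just before it give $ C_D = 2A_{D - 1} $ and $ D_D = (-1)^{D - 1} = -1 $. Set $ \theta = A_{D - 1} + B_{D - 1} \sqrt{N} $ and $ \bar\theta = A_{D - 1} - B_{D - 1} \sqrt{N} $. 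Since the fundamental solution satisfies $ A_{D - 1}^2 - N B_{D - 1}^2 = 1 $, we have $ \theta + \bar\theta = 2 A_{D - 1} = C_D $ and $ \theta \bar\theta = 1 = -D_D $, so $ \theta $ and $ \bar\theta $ are precisely the two roots of $ z^2 - C_D z - D_D = 0 $.

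The central step is to prove the power formula
\begin{align*}
	A_{nD - 1} + B_{nD - 1} \sqrt{N} = \theta^n \qquad (n \geq 0).
\end{align*}
To this end I would introduce $ s_n = A_{nD - 1} + B_{nD - 1} \sqrt{N} $ and show that it obeys the same second-order recurrence $ s_{n + 2} = C_D s_{n + 1} + D_D s_n $ as $ \theta^n $. The sequence $ (B_{nD - 1}) $ satisfies this recurrence by Theorem \ref{Th:Single_Reccurence} applied with period $ D $, and the companion sequence $ (A_{nD - 1}) $ satisfies the very same recurrence, with the same constants: indeed $ A_\nu $ and $ B_\nu $ are two solutions of the common three-term recurrence of Corollary \ref{Th:Recurrence_Relation}, so the shift-by-$ D $ operator acts on the two-dimensional solution space through the one-period transfer matrix appearing in the matrix-product corollary, and Cayley--Hamilton forces both $ (A_{nD - 1}) $ and $ (B_{nD - 1}) $ to satisfy $ w_{n + 2} = C_D w_{n + 1} + D_D w_n $ (the analogue of Theorem \ref{Th:Single_Reccurence} for $ A $ is also in \cite{Perron_13}). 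Since $ \theta $ is a root of $ z^2 - C_D z - D_D = 0 $, the sequence $ \theta^n $ satisfies the same recurrence, so it remains only to match two initial values: using $ A_{-1} = 1 $ and $ B_{-1} = 0 $ gives $ s_0 = 1 = \theta^0 $, and by construction $ s_1 = A_{D - 1} + B_{D - 1} \sqrt{N} = \theta = \theta^1 $, whence $ s_n = \theta^n $ for all $ n \geq 0 $.

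The power formula then yields both inclusions. Taking conjugates gives $ A_{nD - 1} - B_{nD - 1} \sqrt{N} = \bar\theta^n $, so multiplying the two yields $ A_{nD - 1}^2 - N B_{nD - 1}^2 = (\theta \bar\theta)^n = 1 $, and every listed pair is genuinely a solution. Conversely, $ s_n = \theta^n $ exhibits the listed pairs as exactly the consecutive powers of the fundamental solution; a classical descent argument (see the treatment of Pell's equation in \cite{Khrushchev_08}) shows that every positive solution of $ x^2 - N y^2 = 1 $ arises as such a power, and translating back through $ \nu = nD - 1 $ recovers the stated indices $ nd - 1 $ for even $ d $ and $ 2nd - 1 $ for odd $ d $.

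The step I expect to require the most care is the passage from "powers of the fundamental solution" to "all positive solutions," that is, the reverse inclusion: establishing minimality of the fundamental solution and running the descent showing that no solution can lie strictly between two consecutive powers $ \theta^n $ and $ \theta^{n + 1} $. This is the genuinely number-theoretic ingredient, the one part not delivered by the continuant identities themselves, and it implicitly relies on knowing that no convergent index $ m \not\equiv -1 \pmod{D} $ produces $ A_m^2 - N B_m^2 = \pm 1 $. The only other point needing explicit justification is that $ A_\nu $ shares the $ D $-step recurrence with $ B_\nu $ using identical constants $ C_D, D_D $, but this follows cleanly from the transfer-matrix viewpoint already implicit in the matrix-product corollary.
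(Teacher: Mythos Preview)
Your proposal is correct, and in fact it goes well beyond what the paper does: the paper's entire ``proof'' of this theorem is the sentence ``See Khrushchev's book \cite{Khrushchev_08} for proofs.'' There is nothing to compare against on the paper's side; you have supplied an actual argument where the paper supplies only a citation.

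Your route---identifying $\theta = A_{D-1} + B_{D-1}\sqrt{N}$ and $\bar\theta$ with the two roots of the characteristic equation $z^2 - C_D z - D_D = 0$, verifying that $(A_{nD-1} + B_{nD-1}\sqrt{N})$ and $(\theta^n)$ satisfy the same second-order recurrence with matching initial data, and then invoking the classical descent---is sound and is the standard way to connect the continuant machinery to Pell theory. Two small points worth tightening: first, the identity $C_D = 2A_{D-1}$ is stated in the paper only for the minimal period $d$, so when $d$ is odd and you pass to $D = 2d$ you should remark that the palindrome structure of the partial quotients of $\sqrt{N}$ persists under doubling the period, whence the same relation holds for $D$; second, your assertion that $(A_\nu)$ obeys the same $D$-step recurrence as $(B_\nu)$ is correct, and while the transfer-matrix/Cayley--Hamilton justification is fine, you could equally just rerun the proof of Theorem~\ref{Th:Single_Reccurence} with the $A$-versions of Corollaries~\ref{Th:Catalan_Identity} and~\ref{Th:Telescoping_Sum}, which the paper already records. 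Your own diagnosis of the delicate step---that the reverse inclusion (every positive solution is a power of the fundamental one) is the genuinely number-theoretic input not furnished by the continuant identities---is exactly right, and deferring it to \cite{Khrushchev_08} is precisely what the paper itself does for the whole theorem.
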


\begin{proof} See Khrushchev's book \cite{Khrushchev_08} for proofs.
\end{proof}

Hence, our telescoping sum can be reinterpreted as the next corollary.
\begin{corollary} If $ d $ is even and $ (x_n, y_n) $ is the $ n $-th solution to $ x^2 - Ny^2 = 1 $, then
\begin{align*}
	\sum_{n = 1}^\infty \frac{1}{y_n y_{n + 1}} = \frac{x_1 - \sqrt{x_1^2 - 1}}{y_1^2}.
\end{align*}	
\end{corollary}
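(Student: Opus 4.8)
The plan is to specialize the Telescoping sum 2 theorem to the continued fraction of $\sqrt{N}$ and then rewrite the resulting identity using the two Pell-equation theorems stated above. Almost everything needed has already been assembled, so the proof reduces to checking that the pieces fit together with the correct signs.

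First I would record the relevant constants. For $\sqrt{N}$ the continued fraction is simple, so every partial numerator equals $1$; thus $a_1 \cdots a_d = 1$ and $D_d = (-1)^{d-1}$. Since $d$ is even this gives $D_d = -1$, and hence $-D_d = 1$. Feeding this into the Telescoping sum 2 theorem collapses the weight $(-D_d)^{n-1}$ to $1$ and leaves
\[
\sum_{n=1}^\infty \frac{1}{B_{nd-1}B_{(n+1)d-1}} = \frac{\alpha}{B_{d-1}^2}.
\]

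Next I would translate into Pell language. The theorem describing all solutions gives, for even $d$, $y_n = B_{nd-1}$ (so $y_1 = B_{d-1}$) and $x_1 = A_{d-1}$. Therefore $y_n y_{n+1} = B_{nd-1}B_{(n+1)d-1}$ and $y_1^2 = B_{d-1}^2$, converting the sum to $\sum_{n\geq 1} 1/(y_n y_{n+1}) = \alpha/y_1^2$. It then remains only to identify $\alpha$. With $C_d = 2A_{d-1} = 2x_1$ and $D_d = -1$, the equation $D_d z^2 + C_d z - 1 = 0$ becomes $z^2 - 2x_1 z + 1 = 0$, whose roots are $x_1 \pm \sqrt{x_1^2-1}$; the defining formula $\alpha = (-C_d + \sqrt{\Delta})/(2D_d)$ selects $\alpha = x_1 - \sqrt{x_1^2-1}$. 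Substituting yields the claimed value $\alpha/y_1^2 = (x_1 - \sqrt{x_1^2-1})/y_1^2$.

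The only delicate point is the sign bookkeeping: one must confirm that $\alpha$, although described earlier as the ``positive root'', is here the smaller of the two positive roots $x_1 \pm \sqrt{x_1^2-1}$, consistent with $|\alpha| < |\beta|$ from the discussion preceding Theorem \ref{Th:Limit_of_Consecutive_Terms}. Once the correct branch $\alpha = x_1 - \sqrt{x_1^2-1}$ is fixed, the corollary follows immediately, so I expect no genuine difficulty beyond this verification.
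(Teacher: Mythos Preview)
Your proposal is correct and follows exactly the route the paper intends: the corollary is stated immediately after the paper computes $C_d=2A_{d-1}$, $D_d=(-1)^{d-1}$, and $\alpha=(-1)^d A_{d-1}-(-1)^d\sqrt{A_{d-1}^2-(-1)^d}$, and is offered simply as a reinterpretation of the Telescoping sum~2 theorem via the Pell-equation dictionary $x_n=A_{nd-1}$, $y_n=B_{nd-1}$. Your handling of the ``positive root'' wording is appropriate, since with $D_d=-1$ both roots of $D_d z^2+C_d z-1=0$ are positive and the explicit formula $\alpha=(-C_d+\sqrt{\Delta})/(2D_d)$ indeed selects $x_1-\sqrt{x_1^2-1}$.
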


In the case when $ N = 8 $, the sum becomes 
\begin{align*}
\sum_{n = 1}^\infty \frac{1}{y_n y_{n + 1}} = \frac{1}{6} + \frac{1}{210} + \frac{1}{7140} + \ldots = 3 - \sqrt{8}.
\end{align*}

\subsubsection{Telescoping sum 3}

We assume for this part	that $ (B_\nu) $ is the sequence of denominators of the convergents to $ \sqrt{N} $. In this case, $ a_n = 1 $ is constant and $ D_d = (-1)^{d - 1} $. The next corollary is similar to the previous one.
\begin{corollary} If $ d $ is even and $ (x_n, y_n) $ is the $ n $-th solution to $ x^2 - Ny^2 = 1 $, then
\begin{align*}
	\sum_{n = 1}^\infty \frac{1}{x_n x_{n + 1}} = \frac{x_1 - \sqrt{x_1^2 - 1}}{x_1 \sqrt{x_1^2 - 1}}.
\end{align*}
\end{corollary}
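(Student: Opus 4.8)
The plan is to run the same telescoping argument as in the preceding corollary, but with the numerators $x_n = A_{nd-1}$ replacing the denominators $y_n = B_{nd-1}$, and to exploit the Pell structure we have assumed here. The quantity that will telescope is $x_{n+1}/x_n - x_{n+2}/x_{n+1}$, so the engine of the proof is a Cassini-type identity for the numerators, namely $x_{n+1}^2 - x_n x_{n+2} = -(x_1^2 - 1)$ when $d$ is even.

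First I would record the closed form for $x_n$. Since $(B_\nu)$ are the denominators of the convergents to $\sqrt{N}$ and $d$ is even, the stated Pell theorems give $(x_n, y_n) = (A_{nd-1}, B_{nd-1})$ running through the solutions of $x^2 - Ny^2 = 1$, with $x_n + y_n\sqrt{N} = \varepsilon^n$ where $\varepsilon = x_1 + y_1\sqrt{N}$ is the fundamental unit. Because $\varepsilon\,\overline{\varepsilon} = x_1^2 - Ny_1^2 = 1$, the conjugate is $\overline{\varepsilon} = \varepsilon^{-1}$, and therefore $x_n = (\varepsilon^n + \varepsilon^{-n})/2$. Expanding $x_{n+1}^2 - x_n x_{n+2}$ with this formula collapses everything to $-\tfrac14(\varepsilon - \varepsilon^{-1})^2 = -Ny_1^2 = -(x_1^2 - 1)$, which is the desired Cassini identity. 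Equivalently, one can stay inside the continuant framework: $(A_{nd-1})$ satisfies the same recurrence $A_{(n+2)d-1} = C_d A_{(n+1)d-1} + D_d A_{nd-1}$ as $(B_{nd-1})$ --- this follows from the product-of-matrices corollary via Cayley--Hamilton applied to the period matrix, or by rerunning the proof of Theorem \ref{Th:Single_Reccurence} with Catalan's identity (Corollary \ref{Th:Catalan_Identity}) applied to $A$ --- whence the standard identity $x_{n+1}^2 - x_n x_{n+2} = (-D_d)^n(A_{d-1}^2 - A_{2d-1})$ reduces, for $d$ even, to the same constant.

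Next I would divide by $x_n x_{n+1}$ to get
\begin{align*}
	\frac{x_{n+1}}{x_n} - \frac{x_{n+2}}{x_{n+1}} = \frac{-(x_1^2 - 1)}{x_n x_{n+1}},
\end{align*}
sum from $n = 1$ to $N$, and let $N \to \infty$. The telescoped left-hand side leaves $x_2/x_1 - \lim_{N} x_{N+2}/x_{N+1}$; since $\varepsilon > 1$, the closed form gives $x_{N+2}/x_{N+1} \to \varepsilon = x_1 + \sqrt{x_1^2 - 1}$ (this is also the $A$-analogue of Theorem \ref{Th:Limit_of_Consecutive_Periods}, with $-D_d\beta = \varepsilon$ for $d$ even). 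Using $x_2 = 2x_1^2 - 1$ and the simplification $\varepsilon - x_2/x_1 = \sqrt{x_1^2 - 1}\,(x_1 - \sqrt{x_1^2 - 1})/x_1$, I would conclude
\begin{align*}
	\sum_{n=1}^\infty \frac{1}{x_n x_{n+1}} = \frac{1}{x_1^2 - 1}\left(\varepsilon - \frac{x_2}{x_1}\right) = \frac{x_1 - \sqrt{x_1^2 - 1}}{x_1\sqrt{x_1^2 - 1}}.
\end{align*}

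The only real subtlety --- and the reason this does not follow quite as mechanically as the denominator version --- is the Cassini step. In the previous corollary the identity $B_{(n+1)d-1}^2 - B_{nd-1}B_{(n+2)d-1} = (-D_d)^n B_{d-1}^2$ came straight from the all-$B$ form of Theorem \ref{Th:Cassini_Identity}; by contrast, the mixed $A$--$B$ form of that theorem never yields a pure $A^2$ term at the required indices, so the numerator Cassini identity cannot be read off it directly. This is exactly why I would route through either the Pell closed form or the separate fact that $(A_{nd-1})$ obeys the same second-order recurrence as $(B_{nd-1})$; once that identity is in hand, the telescoping and the limit are routine and mirror the preceding corollary.
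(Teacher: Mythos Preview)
Your proof is correct, but it takes a different route from the paper's. The paper does \emph{not} need a pure-$A$ Cassini identity: it applies Theorem~\ref{Th:Cassini_Identity} (the mixed $A$--$B$ version) with $\lambda=\mu=d$ and $\nu=(n-1)d$ to obtain
\[
\frac{B_{nd-1}}{A_{(n+1)d-1}} - \frac{B_{(n-1)d-1}}{A_{nd-1}} = \frac{(-1)^{(n-1)d}(a_1\cdots a_d)^{n-1}A_{d-1}B_{d-1}}{A_{nd-1}A_{(n+1)d-1}},
\]
then telescopes the ratio $B_{nd-1}/A_{(n+1)d-1}$ (noting $B_{-1}=0$) and computes the limit $B_{Nd-1}/A_{(N+1)d-1}\to -2B_{d-1}/(D_d^2\beta(\alpha-\beta))$. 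So your remark that the mixed Cassini identity ``never yields a pure $A^2$ term'' is true but beside the point: the paper sidesteps that by telescoping $B/A$ rather than $A/A$. Your approach, by contrast, goes through the Pell closed form $x_n=(\varepsilon^n+\varepsilon^{-n})/2$ to get $x_{n+1}^2-x_nx_{n+2}=-(x_1^2-1)$ directly. This is more self-contained (no need for the continuant Binet machinery or the limit in Theorem~\ref{Th:Limit_of_Consecutive_Periods}) and arguably more transparent for anyone coming from the Pell-equation side, but it is specific to the $\sqrt{N}$ setting; the paper's argument, staying inside the continuant framework, yields the general identity $\sum (-D_d)^n/(A_{nd-1}A_{(n+1)d-1}) = -2\alpha/(A_{d-1}(\alpha-\beta))$ before specialising.
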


\begin{proof} If we choose  $ \lambda = \mu = d $ and $ \nu = (n - 1)d $ with $ n \geq 1 $, then Theorem \ref{Th:Cassini_Identity} becomes
\begin{align*}	
	\frac{B_{nd - 1}}{A_{(n + 1)d - 1}} - \frac{B_{(n - 1)d - 1}}{A_{nd - 1}} = \frac{(-1)^{(n - 1)d} (a_1 \cdots a_d)^{n - 1} A_{d - 1} B_{d - 1}}{A_{nd - 1} A_{(n + 1)d - 1}}.
\end{align*}
The LHS is telescoping, so summing from $ 1 $ to $ N $, we get
\begin{align*}
	\frac{B_{Nd - 1}}{A_{(N + 1)d - 1}} - \frac{B_{-1}}{A_{d - 1}} = \frac{B_{Nd - 1}}{A_{(N + 1)d - 1}} = \sum_{n = 1}^N \frac{(-1)^{(n - 1)d} (a_1 \cdots a_d)^{n - 1} A_{d - 1} B_{d - 1}}{A_{nd - 1} A_{(n + 1)d - 1}}.
\end{align*}
When $ N \longrightarrow \infty $, using the same procedure as in Theorem  \ref{Th:Limit_of_Consecutive_Periods}, one checks that
\begin{align*}
	\frac{B_{Nd - 1}}{A_{(N + 1)d - 1}} \rightarrow -\frac{2B_{d - 1}}{D_d^2 \beta (\alpha - \beta)}.
\end{align*}
Therefore,
\begin{align*}
	\sum_{n = 1}^\infty \frac{(-1)^{(n - 1)d} (a_1 \cdots a_d)^{n - 1} A_{d - 1} B_{d - 1}}{A_{nd - 1} A_{(n + 1)d - 1}} = -\frac{2B_{d - 1}}{D_d^2 \beta (\alpha - \beta)},
\end{align*}
which can be simplified to 
\begin{align*}
	\sum_{n = 1}^\infty \frac{(-D_d)^n}{A_{nd - 1} A_{(n + 1)d - 1}} &= -\frac{2\alpha}{A_{d - 1} (\alpha - \beta)}.
\end{align*}
We get our conclusion by noting that $ D_d = -1 $ and $ \alpha - \beta = -2\sqrt{A_{d - 1}^2 - 1} $.
\end{proof}

In the case when $ N = 8 $, we have the sum
\begin{align*}
	\sum_{n = 1}^\infty \frac{1}{x_n x_{n + 1}} = \frac{1}{51} + \frac{1}{1683} + \frac{1}{57123} + \ldots = \frac{3\sqrt{2} - 4}{12}.
\end{align*}

\subsubsection{Telescoping sum 4}

Let $ (x_n) $ and $ (y_n) $ be any sequences . Consider the identity, which can be verified directly,
\begin{align*}
	\frac{4(x_{n + 1} y_n - x_n y_{n + 1})(x_n x_{n + 1} - y_n y_{n + 1})}{(x_n - y_n)^2 (x_{n + 1} - y_{n + 1})^2} = \frac{(x_n + y_n)^2}{(x_n - y_n)^2} - \frac{(x_{n + 1} + y_{n + 1})^2}{(x_{n + 1} - y_{n + 1})^2}.
\end{align*}
We use this identity to prove the next theorem.
\begin{theorem}  If the period is $ d $, then
\begin{align*}
	\sum_{n = 1}^\infty \frac{(-D_d)^{n - 1} B_{(2n + 1)d - 1}}{B_{nd - 1}^2 B_{(n + 1)d - 1}^2} = \frac{1}{B_{d - 1}^3}.
\end{align*}
\end{theorem}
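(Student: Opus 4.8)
The plan is to apply the displayed telescoping identity with the choice $x_n = \alpha^n$ and $y_n = \beta^n$, where $\alpha, \beta$ are the roots from Theorem~\ref{Th:Binet_Formula}. The motivation is Binet's formula at $r = -1$: since $B_{-1} = 0$, it collapses to
\begin{align*}
	B_{nd - 1} = \frac{(-D_d)^{n - 1} B_{d - 1}}{\alpha - \beta} (\alpha^n - \beta^n),
\end{align*}
so every $B_{nd - 1}$ is, up to the explicit factor $(-D_d)^{n - 1} B_{d - 1}/(\alpha - \beta)$, just $\alpha^n - \beta^n$. Hence the building blocks of the identity translate directly into continuants: $x_n - y_n = \alpha^n - \beta^n$ becomes $B_{nd - 1}$, the product $x_n x_{n + 1} - y_n y_{n + 1} = \alpha^{2n + 1} - \beta^{2n + 1}$ becomes $B_{(2n + 1)d - 1}$, and the cross term $x_{n + 1} y_n - x_n y_{n + 1} = (\alpha \beta)^n (\alpha - \beta)$ is handled by $\alpha \beta = -1/D_d$.

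First I would substitute these expressions into the left-hand side of the identity, using in addition $(\alpha - \beta)^2 = \Delta/D_d^2$, and then collect the powers of $D_d$, of $\alpha - \beta$, and of $B_{d - 1}$. The bookkeeping should collapse the left-hand side to a fixed constant times the summand, namely
\begin{align*}
	\frac{4(x_{n + 1} y_n - x_n y_{n + 1})(x_n x_{n + 1} - y_n y_{n + 1})}{(x_n - y_n)^2 (x_{n + 1} - y_{n + 1})^2} = \frac{-4 D_d B_{d - 1}^3}{\Delta} \cdot \frac{(-D_d)^{n - 1} B_{(2n + 1)d - 1}}{B_{nd - 1}^2 B_{(n + 1)d - 1}^2}.
\end{align*}
Meanwhile the right-hand side is manifestly telescoping, its $n$-th term being $(\alpha^n + \beta^n)^2/(\alpha^n - \beta^n)^2$, so summing from $1$ to $N$ leaves only the first term minus the $(N + 1)$-th term.

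Finally I would let $N \to \infty$. The first term ($n = 1$) equals $(\alpha + \beta)^2/(\alpha - \beta)^2 = C_d^2/\Delta$, using $\alpha + \beta = -C_d/D_d$ and $(\alpha - \beta)^2 = \Delta/D_d^2$. The subtracted tail term tends to $1$: since $|\alpha| < |\beta|$, the ratio $(\alpha^n + \beta^n)/(\alpha^n - \beta^n) \to -1$, so its square tends to $1$. Thus the telescoped right-hand side sums to $C_d^2/\Delta - 1 = -4 D_d/\Delta$. Equating this with the summed left-hand side and cancelling the common factor $-4 D_d B_{d - 1}^3/\Delta$ yields the claimed value $1/B_{d - 1}^3$. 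I expect the main obstacle to be not any one deep idea but the routine and error-prone tracking of the exponents of $D_d$ and $\alpha - \beta$ when converting the left-hand side into continuants; the only genuinely analytic point is the tail limit, which is controlled by $|\alpha| < |\beta|$ exactly as in Theorems~\ref{Th:Limit_of_Consecutive_Terms} and~\ref{Th:Limit_of_Consecutive_Periods}.
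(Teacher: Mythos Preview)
Your proposal is correct and follows essentially the same approach as the paper: both substitute $x_n=\alpha^n$, $y_n=\beta^n$ into the displayed algebraic identity, telescope the right-hand side, pass to the limit using $|\alpha|<|\beta|$, and then convert via Binet's formula at $r=-1$. The only cosmetic difference is that you translate the left-hand side into continuants before summing, whereas the paper sums first and translates afterward; the algebra and the final cancellation $C_d^2/\Delta-1=-4D_d/\Delta$ are identical.
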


\begin{proof}
Choose $ x_n = \alpha^n $ and $ y_n = \beta^n $ in the identity above to get
\begin{align*}
	\frac{4(\alpha - \beta)(\alpha^{2n + 1} - \beta^{2n + 1})}{(-D_d)^n (\alpha^n - \beta^n)^2 (\alpha^{n + 1} - \beta^{n + 1})^2} = \frac{(\alpha^n + \beta^n)^2}{(\alpha^n - \beta^n)^2} - \frac{(\alpha^{n + 1} + \beta^{n + 1})^2}{(\alpha^{n + 1} - \beta^{n + 1})^2}.
\end{align*}
The RHS is telescoping, so summing from $ n = 1 $ to $ N $, we obtain
\begin{align*}
	\sum_{n = 1}^N \frac{4(\alpha - \beta)(\alpha^{2n + 1} - \beta^{2n + 1})}{(-D_d)^n (\alpha^n - \beta^n)^2 (\alpha^{n + 1} - \beta^{n + 1})^2} = \frac{(\alpha + \beta)^2}{(\alpha - \beta)^2} - \frac{(\alpha^{N + 1} + \beta^{N + 1})^2}{(\alpha^{N + 1} - \beta^{N + 1})^2}.
\end{align*}
Taking $ N \longrightarrow \infty $, the second term on RHS approaches $ 1 $. Therefore,
\begin{align*}
	\sum_{n = 1}^\infty \frac{4(\alpha - \beta)(\alpha^{2n + 1} - \beta^{2n + 1})}{(-D_d)^n (\alpha^n - \beta^n)^2 (\alpha^{n + 1} - \beta^{n + 1})^2} = \frac{(\alpha + \beta)^2}{(\alpha - \beta)^2} - 1.
\end{align*}
Rewriting this equation in terms of $ B_\nu $, we obtain
\begin{align*}
	\sum_{n = 1}^N \frac{4(-D_d)^{n - 2} B_{(2n + 1)d - 1} B_{d - 1}^3}{B_{nd - 1}^2 B_{(n + 1)d - 1}^2} = \frac{C_d^2}{D_d^2} - \frac{\Delta}{D_d^2}.
\end{align*}
We conclude our theorem upon simplification of this last equation. 
\end{proof}

Suppose that $ (B_\nu) $ are the denominators of the convergents to $ \sqrt{N} $ for some positive integer $ N $. The next corollary is immediate.
\begin{corollary} If $ d $ is even and $ (x_n, y_n) $ are the $ n $-th solution to $ x^2 - Ny^2 = 1 $, then
\begin{align*}
	\sum_{n = 1}^\infty \frac{y_{2n + 1}}{y_n^2 y_{n + 1}^2} = \frac{1}{y_1^3}.
\end{align*}
\end{corollary}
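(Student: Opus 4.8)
The plan is to obtain this corollary as a direct specialization of the preceding theorem (Telescoping sum 4) to the case where $(B_\nu)$ is the sequence of denominators of the convergents to $\sqrt{N}$, so that no new telescoping argument is needed. The entire proof reduces to substituting the Pell-equation interpretation of $B_{nd-1}$ into the general identity
\begin{align*}
	\sum_{n = 1}^\infty \frac{(-D_d)^{n - 1} B_{(2n + 1)d - 1}}{B_{nd - 1}^2 B_{(n + 1)d - 1}^2} = \frac{1}{B_{d - 1}^3},
\end{align*}
and checking that the exponential prefactor disappears.

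First I would record the two facts established earlier for convergents to $\sqrt{N}$: that $D_d = (-1)^{d-1}$, and that when $d$ is even the complete list of solutions to $x^2 - Ny^2 = 1$ is $(x_n, y_n) = (A_{nd-1}, B_{nd-1})$ for $n \geq 1$. Since $d$ is even, $d-1$ is odd, so $D_d = -1$; consequently $-D_d = 1$ and the factor $(-D_d)^{n-1}$ in the theorem is identically $1$, which is precisely why the clean form of the corollary emerges.

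Next I would translate every $B$-term appearing in the identity into a $y$-term via $y_m = B_{md-1}$. Reading off the indices gives $y_n = B_{nd-1}$, $y_{n+1} = B_{(n+1)d-1}$, and $y_1 = B_{d-1}$; replacing $m$ by $2n+1$ in $y_m = B_{md-1}$ turns the numerator into $y_{2n+1} = B_{(2n+1)d-1}$. Substituting these into the displayed identity converts its left-hand side into $\sum_{n=1}^\infty y_{2n+1} / (y_n^2 y_{n+1}^2)$ and its right-hand side into $1/y_1^3$, which is exactly the claim.

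The only point requiring the slightest care — and the place I would focus verification — is the index bookkeeping in the numerator: one must confirm that the subscript $(2n+1)d-1$ produced by Telescoping sum 4 corresponds to the genuine Pell solution $y_{2n+1}$ under $y_m = B_{md-1}$, rather than to some shifted quantity. This is immediate from the solution formula, since $d$ even guarantees that every index of the form $md-1$ labels an actual solution $(x_m, y_m)$ with no parity obstruction. With that confirmed, the substitution is exact and the corollary follows with no further computation.
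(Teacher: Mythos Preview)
Your proposal is correct and is precisely the intended argument: the paper itself declares this corollary ``immediate'' from the preceding theorem, and your substitution $y_m = B_{md-1}$ together with $D_d = (-1)^{d-1} = -1$ for even $d$ is exactly the specialization needed. The index check you flagged for the numerator is the only thing worth mentioning, and you handled it correctly.
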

When $ N = 8 $, we have 
\begin{align*}
	\sum_{n = 1}^\infty \frac{y_{2n + 1}}{y_n^2 y_{n + 1}^2} = \frac{35}{1^2 \cdot 6^2} + \frac{1189}{6^2 \cdot 35^2} + \frac{40391}{35^2 \cdot 204^2} + \ldots = 1.
\end{align*}

\subsubsection{Telescoping sum 5}
Suppose that $ D_d = 1 $, so
\begin{align*}
B_{(n + 1)d - 1} - B_{(n - 1)d - 1} = C_d B_{nd - 1}.
\end{align*}
Motivated by the identity $ F_n^2 - F_{n + 1} F_{n - 1} = (-1)^{n - 1} $ for Fibonacci numbers, we deduce
\begin{align*}
\frac{B_{nd - 1}^2}{B_{d - 1}^2} - \frac{B_{(n + 1)d - 1}}{B_{d - 1}} \cdot \frac{B_{(n - 1)d - 1}}{B_{d - 1}} &= (-1)^{n - 1},
\end{align*}
where $ n \geq 1 $. The application of this identity yields the next result.
\begin{theorem} \label{Th:Telescoping_arctan} If $ D_d = 1 $, then
\begin{align*}
	\sum_{n = 1}^\infty \tan^{-1} \frac{B_{2d - 1}}{B_{(2n + 1)d - 1}} = \tan^{-1} \frac{B_{d - 1}}{B_{2d - 1}}.
\end{align*}
\end{theorem}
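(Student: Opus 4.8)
The plan is to treat this as an arctangent telescoping sum built on the angle-subtraction formula $\tan^{-1} x - \tan^{-1} y = \tan^{-1}\frac{x-y}{1+xy}$, which holds without a branch correction whenever $xy > -1$. The heart of the argument is to guess and then verify that each summand splits as
\begin{align*}
	\tan^{-1}\frac{B_{2d-1}}{B_{(2n+1)d-1}} = \tan^{-1}\frac{B_{d-1}}{B_{2nd-1}} - \tan^{-1}\frac{B_{d-1}}{B_{(2n+2)d-1}}.
\end{align*}
Writing $T_n = \tan^{-1}\frac{B_{d-1}}{B_{2nd-1}}$, the right-hand side is $T_n - T_{n+1}$, so the series collapses to $T_1 - \lim_{N\to\infty} T_{N+1}$.

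To verify the displayed identity I would apply the subtraction formula with $x = B_{d-1}/B_{2nd-1}$ and $y = B_{d-1}/B_{(2n+2)d-1}$; both are positive, so $xy > 0 > -1$ and all arctangents stay on the principal branch. The numerator is $x - y = B_{d-1}\,(B_{(2n+2)d-1} - B_{2nd-1})/(B_{2nd-1}B_{(2n+2)d-1})$, and since $D_d = 1$ the relation $B_{(m+1)d-1} - B_{(m-1)d-1} = C_d B_{md-1}$ (applied at $m = 2n+1$) rewrites the factor $B_{(2n+2)d-1} - B_{2nd-1}$ as $C_d B_{(2n+1)d-1}$. For the denominator $1+xy$ I would invoke the Cassini identity stated just before the theorem at index $2n+1$: since $(-1)^{(2n+1)-1} = 1$, it gives $B_{(2n+1)d-1}^2 - B_{2nd-1}B_{(2n+2)d-1} = B_{d-1}^2$, whence $B_{2nd-1}B_{(2n+2)d-1} + B_{d-1}^2 = B_{(2n+1)d-1}^2$. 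Combining these,
\begin{align*}
	\frac{x-y}{1+xy} = \frac{C_d B_{d-1} B_{(2n+1)d-1}}{B_{(2n+1)d-1}^2} = \frac{C_d B_{d-1}}{B_{(2n+1)d-1}} = \frac{B_{2d-1}}{B_{(2n+1)d-1}},
\end{align*}
where the last step uses $C_d B_{d-1} = B_{2d-1}$ from the definition $C_d = B_{2d-1}/B_{d-1}$ in Theorem \ref{Th:Single_Reccurence}. This recovers the summand exactly.

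With the identity established, summing from $n=1$ to $N$ yields $T_1 - T_{N+1} = \tan^{-1}\frac{B_{d-1}}{B_{2d-1}} - \tan^{-1}\frac{B_{d-1}}{B_{(2N+2)d-1}}$. It remains to let $N\to\infty$. Since $D_d = 1$, Theorem \ref{Th:Limit_of_Consecutive_Periods} gives $B_{(n+1)d-1}/B_{nd-1} \to -\beta$, and $-\beta = |\beta| > 1$ because $\alpha\beta = -1/D_d = -1$ forces $|\beta|^2 > 1$ given $|\alpha| < |\beta|$. Hence $B_{(2N+2)d-1} \to \infty$, so $T_{N+1} \to 0$ and the sum equals $\tan^{-1}\frac{B_{d-1}}{B_{2d-1}}$, as claimed.

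I expect the main obstacle to be locating the correct telescoping decomposition rather than the ensuing algebra. Several plausible pairings — for instance differencing arctangents of ratios of consecutive terms $B_{md-1}/B_{(m+1)d-1}$ — do produce the right numerator but a denominator that refuses to collapse to $B_{(2n+1)d-1}^2$, so the term never simplifies to the summand. The decomposition above is precisely the one for which the even-index Cassini identity furnishes the denominator $B_{(2n+1)d-1}^2$; finding this pairing, and confirming that the positivity of every $B_{md-1}$ keeps each arctangent on the principal branch so no $\pm\pi$ corrections intrude, is the crux of the proof.
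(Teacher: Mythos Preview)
Your proof is correct and follows essentially the same route as the paper: the same telescoping decomposition $T_n - T_{n+1}$ with $T_n = \tan^{-1}(B_{d-1}/B_{2nd-1})$, the same use of $B_{(2n+2)d-1} - B_{2nd-1} = C_d B_{(2n+1)d-1}$ for the numerator and the Cassini-type identity for the denominator, and the same limit argument. Your version is in fact a bit more careful than the paper's, since you explicitly check the branch condition $xy>-1$ and justify $B_{(2N+2)d-1}\to\infty$ via Theorem~\ref{Th:Limit_of_Consecutive_Periods}.
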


\begin{proof} Using the addition formula for $ \tan^{-1} $ (see \S \ref{Sec:Series_Involving_arctan}),
\begin{align*}
	\tan^{-1} \frac{B_{d - 1}}{B_{2nd - 1}} - \tan^{-1} \frac{B_{d - 1}}{B_{2(n + 1)d - 1}} &= \tan^{-1} \frac{B_{d - 1} (B_{2(n + 1)d - 1} - B_{2nd - 1})}{B_{d - 1}^2 + B_{2(n + 1)d - 1} B_{2nd - 1}} \\
	&= \tan^{-1} \frac{B_{2d - 1}}{B_{(2n + 1)d - 1}}.
\end{align*}
The LHS is telescoping, so summing from $ 1 $ to $ N $, we obtain
\begin{align*}
	\tan^{-1} \frac{B_{d - 1}}{B_{2d - 1}} - \tan^{-1} \frac{B_{d - 1}}{B_{2(N + 1)d - 1}} = \sum_{n = 1}^N \tan^{-1} \frac{B_{2d - 1}}{B_{(2n + 1)d - 1}}.
\end{align*}
Taking $ N \longrightarrow \infty $, the second term on the LHS vanishes, so
\begin{align*}
	\sum_{n = 1}^\infty \tan^{-1} \frac{B_{2d - 1}}{B_{(2n + 1)d - 1}} = \tan^{-1} \frac{B_{d - 1}}{B_{2d - 1}}.
\end{align*}
\end{proof}

\begin{remark} This theorem generalizes an earlier theorem by Hoggatt and Bruggles \cite[Theorem 4]{Hoggatt_Bruggles_64}.
\end{remark}

Using a similar construction, we can get a corresponding sum for $ \tanh^{-1} $.
\begin{theorem} \label{Th:Telescoping_artanh} If $ D_d = 1 $, then
\begin{align*}
	\sum_{n = 2}^\infty \tanh^{-1} \frac{B_{2d - 1}}{B_{2nd - 1}} = \frac{1}{2} \ln \frac{B_{3d - 1} + B_{d - 1}}{B_{3d - 1} - B_{d - 1}}.
\end{align*}
\end{theorem}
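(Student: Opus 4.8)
The plan is to mirror the proof of Theorem \ref{Th:Telescoping_arctan}, replacing the addition formula for $ \tan^{-1} $ by the one for $ \tanh^{-1} $, namely $ \tanh^{-1} u - \tanh^{-1} v = \tanh^{-1} \frac{u - v}{1 - uv} $. The only structural change comes from the sign of the $ uv $ term in the denominator, and matching this sign correctly will dictate the parity of the inner index we use. First I would establish, for $ n \geq 2 $, the telescoping identity
\begin{align*}
	\tanh^{-1} \frac{B_{d - 1}}{B_{(2n - 1)d - 1}} - \tanh^{-1} \frac{B_{d - 1}}{B_{(2n + 1)d - 1}} = \tanh^{-1} \frac{B_{2d - 1}}{B_{2nd - 1}}.
\end{align*}

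Applying the addition formula, the right-hand argument becomes a single fraction whose numerator is $ B_{d - 1}(B_{(2n + 1)d - 1} - B_{(2n - 1)d - 1}) $ and whose denominator is $ B_{(2n - 1)d - 1} B_{(2n + 1)d - 1} - B_{d - 1}^2 $. For the numerator I would invoke the recurrence $ B_{(2n + 1)d - 1} - B_{(2n - 1)d - 1} = C_d B_{2nd - 1} $ from the opening of this subsection together with $ C_d = B_{2d - 1}/B_{d - 1} $, so that it collapses to $ B_{2d - 1} B_{2nd - 1} $. For the denominator I would apply the Cassini-type identity displayed at the start of this subsection at the \emph{even} index $ 2n $, where the sign $ (-1)^{2n - 1} = -1 $ gives exactly $ B_{(2n - 1)d - 1} B_{(2n + 1)d - 1} - B_{d - 1}^2 = B_{2nd - 1}^2 $. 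Dividing, the argument reduces to $ B_{2d - 1}/B_{2nd - 1} $, as required.

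The main subtlety, and the point where this proof genuinely differs from the $ \tan^{-1} $ case, is that the $ 1 - uv $ denominator of the $ \tanh^{-1} $ addition formula forces the even-index instance of the Cassini identity (producing $ -B_{d - 1}^2 $), whereas Theorem \ref{Th:Telescoping_arctan} used the odd-index instance (producing $ +B_{d - 1}^2 $) to match the $ 1 + uv $ denominator of $ \tan^{-1} $. This parity shift is also what forces the summation to begin at $ n = 2 $: taking $ n = 1 $ would require $ \tanh^{-1}(B_{d - 1}/B_{d - 1}) = \tanh^{-1}(1) $, which diverges, so the left endpoint of the telescope is the $ n = 2 $ term $ \tanh^{-1}(B_{d - 1}/B_{3d - 1}) $ rather than a term at $ n = 1 $. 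I expect no serious obstacle beyond bookkeeping this sign-and-parity match.

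Finally I would sum the identity from $ n = 2 $ to $ N $. The left-hand side telescopes to $ \tanh^{-1}(B_{d - 1}/B_{3d - 1}) - \tanh^{-1}(B_{d - 1}/B_{(2N + 1)d - 1}) $. Since the denominators $ B_{(2N + 1)d - 1} $ grow without bound, the second term tends to $ \tanh^{-1}(0) = 0 $ as $ N \to \infty $, leaving $ \sum_{n = 2}^\infty \tanh^{-1}(B_{2d - 1}/B_{2nd - 1}) = \tanh^{-1}(B_{d - 1}/B_{3d - 1}) $. Rewriting $ \tanh^{-1} x = \frac{1}{2} \ln \frac{1 + x}{1 - x} $ with $ x = B_{d - 1}/B_{3d - 1} $ then yields the stated closed form $ \frac{1}{2} \ln \frac{B_{3d - 1} + B_{d - 1}}{B_{3d - 1} - B_{d - 1}} $.
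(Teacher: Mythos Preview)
Your proposal is correct and follows essentially the same approach as the paper's proof: the same telescoping identity in $\tanh^{-1}$, the same summation range starting at $n=2$ (with the same reason that $\tanh^{-1}1$ diverges), the same limit argument, and the same final rewriting via $\tanh^{-1}x = \tfrac12\ln\frac{1+x}{1-x}$. If anything, you spell out the numerator and denominator simplifications more explicitly than the paper does, but the method is identical.
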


\begin{proof} Using the addition formula for $ \tanh^{-1} $ (see \S \ref{Sec:Series_Involving_artanh}),
\begin{align*}
	\tanh^{-1} \frac{B_{d - 1}}{B_{(2n - 1)d - 1}} - \tanh^{-1} \frac{B_{d - 1}}{B_{(2n + 1)d - 1}} &= \tanh^{-1} \frac{B_{d - 1} (B_{(2n + 1)d - 1} - B_{(2n - 1)d - 1})}{B_{(2n + 1)d - 1} B_{(2n - 1)d - 1} - B_{d - 1}^2} \\
	&= \tanh^{-1} \frac{B_{2d - 1}}{B_{2nd - 1}}.
\end{align*}
The LHS is telescoping, so summing from $ 2 $ to $ N $ ($ \tanh^{-1} 1 $ is undefined), we obtain
\begin{align*}
	\tanh^{-1} \frac{B_{d - 1}}{B_{3d - 1}} - \tanh^{-1} \frac{B_{d - 1}}{B_{(2N + 1)d - 1}} = \sum_{n = 2}^N \tanh^{-1} \frac{B_{2d - 1}}{B_{2nd - 1}}.
\end{align*}
Taking $ N \longrightarrow \infty $, the second term on the LHS vanishes, so
\begin{align*}
	\sum_{n = 2}^\infty \tanh^{-1} \frac{B_{2d - 1}}{B_{2nd - 1}} = \tanh^{-1} \frac{B_{d - 1}}{B_{3d - 1}} = \frac{1}{2} \ln \frac{B_{3d - 1} + B_{d - 1}}{B_{3d - 1} - B_{d - 1}},
\end{align*}
where we used the fact that 
\begin{align*}
	\tanh^{-1} x = \frac{1}{2} \ln \frac{1 + x}{1 - x}.
\end{align*}
\end{proof}

Let us apply these theorems to the continued fraction of $ \sqrt{2} = [1, \overline{2}] $. The denominators, $ (B_\nu) $, of the convergents to $ \sqrt{2} $ are $ 1, 2, 5, 12, 29, 70, 169, 408, 985, \ldots $. Each $ B_\nu $ corresponds to the $ (\nu + 1) $-th term of the Pell number (see \seqnum{A000129} on OEIS). The theorems above imply that
\begin{align*}
	\sum_{n = 1}^\infty \tan^{-1} \frac{2}{B_{2n}} = \tan^{-1} \frac{2}{5} + \tan^{-1} \frac{2}{29} + \tan^{-1} \frac{2}{169} + \ldots = \tan^{-1} \frac{1}{2},
\end{align*}
and
\begin{align*}
	\sum_{n = 2}^\infty \tanh^{-1} \frac{2}{B_{2n - 1}} = \tanh^{-1} \frac{1}{6} + \tanh^{-1} \frac{1}{35} + \tanh^{-1} \frac{1}{204} + \ldots = \tanh^{-1} \frac{1}{5} = \frac{1}{2} \ln \frac{3}{2}.
\end{align*}

\subsection{More series involving \texorpdfstring{$ \tan^{-1} $}{arctan} and \texorpdfstring{$ \tanh^{-1} $}{artanh}} \label{Sec:More_Series_Involving_arctan_and_artanh}

Besides the telescoping sums in the last section, another set of series can be generated using $ \tan^{-1} $ and $ \tanh^{-1} $. The results in this section are inspired by the papers of Castellanos \cite{Castellanos_86, Castellanos_89}. However, Castellanos' original method using Chebyshev polynomials is cumbersome. Therefore, we provide a direct method that is applicable to his results, while generating new ones.

\subsubsection{Series involving \texorpdfstring{$ \tan^{-1} $}{arctan}} \label{Sec:Series_Involving_arctan}

 Let $ \zeta $ be a number to be determined later. Using the addition formula for $ \tan^{-1} $,
\begin{align*}
	\tan^{-1} (x - y) = \frac{\tan^{-1} x - \tan^{-1} y}{1 + \tan^{-1} x \tan^{-1} y},
\end{align*}
and the Gregory series,
\begin{align*}
	\tan^{-1} z = \sum_{k = 0}^\infty \frac{(-1)^k z^{2k + 1}}{2k + 1},
\end{align*}
we get
\begin{align*}
	\tan^{-1} \frac{(\alpha - \beta) \zeta}{\alpha \beta + \zeta^2} = \tan^{-1} \frac{\zeta}{\beta} - \tan^{-1} \frac{\zeta}{\alpha} = \sum_{k = 0}^\infty \frac{(-1)^k (\alpha^{2k + 1} - \beta^{2k + 1})}{(2k + 1) (\alpha \beta)^{2k + 1}} \zeta^{2k + 1}.
\end{align*}
Since the series expansion for $ \tan^{-1} z $ is only valid for $ |z| < 1 $, we require that $ |\zeta| < \min (|\alpha|, |\beta|) = |\alpha| $. The next two theorems are direct consequences of the last equation.
\begin{theorem} Let $ \zeta $ be a root of $ D_d z^2 - \sqrt{3\Delta} z - 1 = 0 $ that satisfies $ |\zeta| < \alpha $. Then
\begin{align*}
	\frac{\pi B_{d - 1}}{6 \sqrt{\Delta}} = \sum_{k = 0}^\infty \frac{(-1)^{k + 1} B_{(2k + 1)d - 1}}{2k + 1} \zeta^{2k + 1}.
\end{align*}
\end{theorem}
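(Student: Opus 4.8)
The plan is to specialize the master identity
\begin{align*}
	\tan^{-1} \frac{(\alpha - \beta) \zeta}{\alpha \beta + \zeta^2} = \sum_{k = 0}^\infty \frac{(-1)^k (\alpha^{2k + 1} - \beta^{2k + 1})}{(2k + 1) (\alpha \beta)^{2k + 1}} \zeta^{2k + 1}
\end{align*}
established immediately above, by two essentially independent moves: rewrite the power sums $ \alpha^{2k+1} - \beta^{2k+1} $ in terms of continuants via Binet's formula, and then choose $ \zeta $ so that the argument of the left-hand arctangent becomes $ \tan(\pi/6) $.

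First I would invoke Theorem \ref{Th:Binet_Formula} with $ r = -1 $. Since $ B_{-1} = 0 $, it collapses to $ B_{nd-1} = (-D_d)^{n-1}\tfrac{\alpha^n - \beta^n}{\alpha - \beta}B_{d-1} $, and taking $ n = 2k+1 $ (so $ (-D_d)^{2k} = D_d^{2k} $) gives
\begin{align*}
	\alpha^{2k+1} - \beta^{2k+1} = \frac{(\alpha - \beta)\, B_{(2k+1)d-1}}{D_d^{2k}\, B_{d-1}}.
\end{align*}
Substituting this into the right-hand side of the master identity and simplifying the powers of $ D_d $ using $ \alpha\beta = -1/D_d $ (whence $ (\alpha\beta)^{2k+1} = -D_d^{-(2k+1)} $) together with $ \alpha - \beta = \sqrt{\Delta}/D_d $, the prefactor in each term collapses to $ (-1)^{k+1} D_d(\alpha-\beta)/B_{d-1} = (-1)^{k+1}\sqrt{\Delta}/B_{d-1} $, so that
\begin{align*}
	\tan^{-1} \frac{(\alpha - \beta) \zeta}{\alpha \beta + \zeta^2} = \frac{\sqrt{\Delta}}{B_{d-1}} \sum_{k=0}^\infty \frac{(-1)^{k+1} B_{(2k+1)d-1}}{2k+1}\, \zeta^{2k+1}.
\end{align*}
This is already the claimed series, up to the constant $ B_{d-1}/\sqrt{\Delta} $.

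It then remains to evaluate the left-hand side. I would impose $ \frac{(\alpha-\beta)\zeta}{\alpha\beta+\zeta^2} = \tan\frac{\pi}{6} = \frac{1}{\sqrt{3}} $; clearing denominators gives $ \zeta^2 - \sqrt{3}(\alpha-\beta)\zeta + \alpha\beta = 0 $, and inserting $ \alpha - \beta = \sqrt{\Delta}/D_d $ and $ \alpha\beta = -1/D_d $ and multiplying through by $ D_d $ recovers precisely the defining quadratic $ D_d z^2 - \sqrt{3\Delta}\,z - 1 = 0 $. Hence any root of that quadratic forces the arctangent argument to equal $ 1/\sqrt{3} $, the left-hand side becomes $ \pi/6 $, and solving for the sum yields the stated formula.

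The algebra here is routine; the point requiring genuine care is the analytic justification tied to the choice of root. The hypothesis $ |\zeta| < \alpha $ (equivalently $ |\zeta| < \min(|\alpha|,|\beta|) $) is exactly what guarantees $ |\zeta/\alpha|, |\zeta/\beta| < 1 $, so both Gregory series converge and the arctangent subtraction formula is applied entirely within the principal branch $ (-\pi/2, \pi/2) $; this is what pins the value of the left-hand side to $ \pi/6 $ rather than $ \pi/6 \pm \pi $. I would also remark that both roots of $ D_d z^2 - \sqrt{3\Delta}\,z - 1 = 0 $ satisfy the argument equation (their product is $ \alpha\beta < 0 $, so they are real with opposite signs), so that the condition $ |\zeta| < \alpha $ serves only to select the convergent branch, and is assumed in the statement to be met.
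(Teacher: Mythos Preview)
Your proposal is correct and follows exactly the route the paper takes: it invokes the master identity derived just before the statement, specializes $\zeta$ so that the arctangent argument is $1/\sqrt{3}$, and simplifies via Binet's formula with $r=-1$. You have simply made explicit the ``simplification'' step that the paper leaves to the reader (the substitution $\alpha^{2k+1}-\beta^{2k+1}=(\alpha-\beta)B_{(2k+1)d-1}/(D_d^{2k}B_{d-1})$ together with $\alpha\beta=-1/D_d$ and $D_d(\alpha-\beta)=\sqrt{\Delta}$), and your discussion of why $|\zeta|<\alpha$ is needed matches the convergence condition the paper states for the Gregory series.
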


\begin{proof} We choose $ \zeta $ such that 
\begin{align*}
	\frac{(\alpha - \beta) \zeta}{\alpha \beta + \zeta^2} = \frac{1}{\sqrt{3}}.
\end{align*}
We obtain our theorem upon simplification and using the fact $ \tan^{-1} (1/\sqrt{3}) = \pi/6 $.
\end{proof}

\begin{theorem} Let $ \zeta $ be a root of $ D_d z^2 - (\sqrt{2} - 1) \sqrt{\Delta} z - 1 = 0 $ that satisfies $ |\zeta| < \alpha $. Then
\begin{align*}
	\frac{\pi B_{d - 1}}{8 \sqrt{\Delta}} = \sum_{k = 0}^\infty \frac{(-1)^{k + 1} B_{(2k + 1)d - 1}}{2k + 1} \zeta^{2k + 1}.
\end{align*}
\end{theorem}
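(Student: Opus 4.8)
The plan is to follow the proof of the preceding ($\pi/6$) theorem line for line, specializing the arctangent expansion derived just above these two theorems,
\[
\tan^{-1}\frac{(\alpha-\beta)\zeta}{\alpha\beta+\zeta^2}=\sum_{k=0}^\infty\frac{(-1)^k(\alpha^{2k+1}-\beta^{2k+1})}{(2k+1)(\alpha\beta)^{2k+1}}\zeta^{2k+1},
\]
to the continuant sequence. First I would convert its right-hand side into a series in $B_{(2k+1)d-1}$. Setting $r=-1$ in Theorem~\ref{Th:Binet_Formula} and using $B_{-1}=0$ gives $B_{nd-1}=(-D_d)^{n-1}\frac{\alpha^n-\beta^n}{\alpha-\beta}B_{d-1}$; together with $\alpha-\beta=\sqrt{\Delta}/D_d$ and $\alpha\beta=-1/D_d$ this turns the $n=2k+1$ term into $\frac{\alpha^{2k+1}-\beta^{2k+1}}{(\alpha\beta)^{2k+1}}=-\frac{\sqrt{\Delta}}{B_{d-1}}B_{(2k+1)d-1}$. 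Hence the expansion collapses to
\[
\tan^{-1}\frac{(\alpha-\beta)\zeta}{\alpha\beta+\zeta^2}=\frac{\sqrt{\Delta}}{B_{d-1}}\sum_{k=0}^\infty\frac{(-1)^{k+1}B_{(2k+1)d-1}}{2k+1}\zeta^{2k+1},
\]
so the target series is exactly $\dfrac{B_{d-1}}{\sqrt{\Delta}}$ times the single arctangent on the left, and the whole problem reduces to evaluating that arctangent for the prescribed $\zeta$.

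Next I would substitute the defining quadratic exactly as printed. Using $\alpha-\beta=\sqrt{\Delta}/D_d$ and $\alpha\beta=-1/D_d$ once more, the argument simplifies to $\frac{(\alpha-\beta)\zeta}{\alpha\beta+\zeta^2}=\frac{\sqrt{\Delta}\,\zeta}{D_d\zeta^2-1}$. The stated equation $D_d\zeta^2-(\sqrt{2}-1)\sqrt{\Delta}\,\zeta-1=0$ rearranges to $D_d\zeta^2-1=(\sqrt{2}-1)\sqrt{\Delta}\,\zeta$, whence the argument equals $\frac{1}{\sqrt{2}-1}=\sqrt{2}+1$. Since $|\alpha|<|\beta|$, the hypothesis $|\zeta|<\alpha$ forces $|\zeta/\alpha|,|\zeta/\beta|<1$, so both Gregory series converge and the combined arctangent is read at its principal value; this is the only convergence point to check, and it is immediate.

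The final step is to name the angle, and this is exactly where the statement as worded breaks. Because $\tan(3\pi/8)=\cot(\pi/8)=\sqrt{2}+1$, the principal arctangent of $\sqrt{2}+1$ is $3\pi/8$, not $\pi/8$. Carrying this through yields
\[
\sum_{k=0}^\infty\frac{(-1)^{k+1}B_{(2k+1)d-1}}{2k+1}\zeta^{2k+1}=\frac{3\pi B_{d-1}}{8\sqrt{\Delta}},
\]
so the right-hand side of the theorem as printed should read $\dfrac{3\pi B_{d-1}}{8\sqrt{\Delta}}$. The underlying rule, visible from the second step, is that the quadratic $D_d\zeta^2-(\cot\theta)\sqrt{\Delta}\,\zeta-1=0$ delivers the angle $\theta$: the coefficient $\sqrt{3\Delta}=\cot(\pi/6)\sqrt{\Delta}$ produced $\pi/6$ in the previous theorem, while here $\sqrt{2}-1=\cot(3\pi/8)$ produces $3\pi/8$. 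To recover the printed value $\dfrac{\pi B_{d-1}}{8\sqrt{\Delta}}$ one must instead take the coefficient $(\sqrt{2}+1)\sqrt{\Delta}=\cot(\pi/8)\sqrt{\Delta}$. The sole obstacle is thus this bookkeeping between the linear coefficient and the resulting angle (equivalently, $\tan$ versus $\cot$ at $\pi/8$); everything else is the same routine substitution as in the $\pi/6$ case.
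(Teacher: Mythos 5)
Your proof follows the paper's method exactly: the paper's own proof is the one-liner ``choose $\zeta$ such that $(\alpha - \beta)\zeta/(\alpha\beta + \zeta^2) = \sqrt{2} - 1$ and use $\tan^{-1}(\sqrt{2} - 1) = \pi/8$,'' with the Binet-formula conversion of the Gregory series into a series in $B_{(2k+1)d-1}$ carried out once before both arctangent theorems, just as you reproduce it. You have also correctly detected a genuine inconsistency in the printed statement, and the paper's internal evidence adjudicates it in favor of the printed \emph{conclusion} rather than the printed \emph{hypothesis}: the condition $(\alpha-\beta)\zeta/(\alpha\beta+\zeta^2) = \sqrt{2}-1$ actually used in the proof is equivalent, via $\alpha - \beta = \sqrt{\Delta}/D_d$ and $\alpha\beta = -1/D_d$, to $D_d\zeta^2 - (\sqrt{2}+1)\sqrt{\Delta}\,\zeta - 1 = 0$, so the coefficient $(\sqrt{2}-1)$ in the statement is a typo for $\cot(\pi/8) = \sqrt{2}+1$, exactly as your rule ``coefficient $\cot\theta\,\sqrt{\Delta}$ delivers angle $\theta$'' predicts (and as $\sqrt{3\Delta} = \cot(\pi/6)\sqrt{\Delta}$ confirms in the $\pi/6$ case). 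The paper's worked example for $\sqrt{8}$ corroborates this: the value $\zeta = \sqrt{23 + 16\sqrt{2}} - 2\sqrt{2} - 4 \approx -0.074$ used there is a root of $-z^2 - (\sqrt{2}+1)\cdot 4\sqrt{2}\,z - 1 = 0$, i.e.\ of the \emph{corrected} quadratic, and it satisfies $|\zeta| < \alpha = 3 - 2\sqrt{2}$.

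One refinement to your proposed resolution (keep the printed quadratic, replace the value by $3\pi B_{d-1}/(8\sqrt{\Delta})$): while the arithmetic is right, that version of the theorem can be vacuous, since a root of the printed quadratic with $|\zeta| < \alpha$ need not exist --- in the $\sqrt{8}$ example the roots of $z^2 + (8 - 4\sqrt{2})z + 1 = 0$ are approximately $-0.56$ and $-1.78$, neither of modulus less than $\alpha \approx 0.17$. Indeed, reaching the angle $3\pi/8$ requires $|\tan^{-1}(\zeta/\beta)| > \pi/8$, i.e.\ $|\zeta| > (\sqrt{2}-1)|\beta|$, which is incompatible with $|\zeta| < \alpha$ whenever $\alpha/|\beta| \leq \sqrt{2}-1$. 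So the correct erratum is to fix the coefficient to $(\sqrt{2}+1)\sqrt{\Delta}$ and keep $\pi/8$, which is what the paper's proof and example both do; with that emendation your argument coincides with the paper's, and everything else in your write-up (the $r = -1$ Binet reduction, the sign bookkeeping giving the factor $(-1)^{k+1}$, and the convergence check $|\zeta/\alpha|, |\zeta/\beta| < 1$ justifying principal values) is correct.
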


\begin{proof} We choose $ \zeta $ such that 
\begin{align*}
	\frac{(\alpha - \beta) \zeta}{\alpha \beta + \zeta^2} = \sqrt{2} - 1.
\end{align*}
We obtain our theorem upon simplification and using the fact $ \tan^{-1} (\sqrt{2} - 1) = \pi/8 $.
\end{proof}

\subsubsection{Series involving \texorpdfstring{$ \tanh^{-1} $}{artanh}} \label{Sec:Series_Involving_artanh}

We also have similar addition formula for $ \tanh^{-1} $,
\begin{align*}
	\tanh^{-1} (x - y) = \frac{\tanh^{-1} x - \tanh^{-1} y}{1 - \tanh^{-1} x \tanh^{-1} y},
\end{align*}
and Taylor series expansion,
\begin{align*}
	\tanh^{-1} z = \sum_{k = 0}^\infty \frac{z^{2k + 1}}{2k + 1}.
\end{align*}
So one expects similar series using $ \tanh^{-1} $. Again, take $ \zeta $ as a number to be determined later, then 
\begin{align*}
\tanh^{-1} \frac{(\alpha - \beta) \zeta}{\alpha \beta - \zeta^2} = \tanh^{-1} \frac{\zeta}{\beta} - \tanh^{-1} \frac{\zeta}{\alpha} = \sum_{k = 0}^\infty \frac{\alpha^{2k + 1} - \beta^{2k + 1}}{(2k + 1) (\alpha \beta)^{2k + 1}} \zeta^{2k + 1}.
\end{align*}
The series expansion for $ \tanh^{-1} z $ is also valid for $ |z| < 1 $, so we have the same inequality condition $ |\zeta| < |\alpha| $. The previous equation gives the next two theorems. 
\begin{theorem} Let $ \zeta $ be a root of $ D_d z^2 + 2 \sqrt{\Delta} z + 1 = 0 $ that satisfies $ |\zeta| < \alpha $. Then
\begin{align*}
	\frac{B_{d - 1}}{2 \sqrt{\Delta}} \ln 3 = -\sum_{k = 0}^\infty \frac{B_{(2k + 1)d - 1}}{2k + 1} \zeta^{2k + 1}.
\end{align*}
\end{theorem}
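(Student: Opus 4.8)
The plan is to specialize the master identity of this subsection, namely
\[
\tanh^{-1}\frac{(\alpha-\beta)\zeta}{\alpha\beta-\zeta^2}=\sum_{k=0}^\infty\frac{\alpha^{2k+1}-\beta^{2k+1}}{(2k+1)(\alpha\beta)^{2k+1}}\zeta^{2k+1},
\]
to the particular $\zeta$ prescribed in the statement, exactly as in the two preceding theorems. First I would evaluate the left-hand argument. Using the relations $\alpha\beta=-1/D_d$ and $\alpha-\beta=\sqrt{\Delta}/D_d$ recorded after Theorem \ref{Th:Binet_Formula}, the argument collapses to $\sqrt{\Delta}\,\zeta/(-1-D_d\zeta^2)$; then the defining quadratic $D_d\zeta^2+2\sqrt{\Delta}\,\zeta+1=0$ gives $-1-D_d\zeta^2=2\sqrt{\Delta}\,\zeta$, so the argument is precisely $1/2$. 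Hence the left-hand side equals $\tanh^{-1}(1/2)=\tfrac12\ln 3$.

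Next I would convert the right-hand side into continuants. Binet's formula (Theorem \ref{Th:Binet_Formula}) at $r=-1$, together with $B_{-1}=0$, yields the clean identity $B_{nd-1}=(-D_d)^{n-1}\frac{\alpha^n-\beta^n}{\alpha-\beta}B_{d-1}$; taking $n=2k+1$ and noting $(-D_d)^{2k}=D_d^{2k}$ gives $\alpha^{2k+1}-\beta^{2k+1}=(\alpha-\beta)D_d^{-2k}B_{(2k+1)d-1}/B_{d-1}$. Substituting this and $(\alpha\beta)^{2k+1}=-D_d^{-(2k+1)}$ into each summand, the powers of $D_d$ collapse to the single factor $D_d(\alpha-\beta)=\sqrt{\Delta}$, so the sum becomes $-\frac{\sqrt{\Delta}}{B_{d-1}}\sum_{k=0}^\infty\frac{B_{(2k+1)d-1}}{2k+1}\zeta^{2k+1}$. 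Equating the two sides and isolating the sum produces the claimed formula.

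I do not expect any deep obstacle; the real work is the bookkeeping of signs and powers of $D_d$ in the last step, where the odd exponent $2k+1$ in $(\alpha\beta)^{2k+1}$ supplies the crucial minus sign and cancels all but one power of $D_d$. The only genuine hypothesis that must be invoked rather than derived is the convergence condition: the expansion $\tanh^{-1}z=\sum z^{2k+1}/(2k+1)$ is applied at $z=\zeta/\alpha$ and $z=\zeta/\beta$, so it requires $|\zeta|<\min(|\alpha|,|\beta|)=|\alpha|$. This is exactly the side condition $|\zeta|<\alpha$ imposed on the root of $D_d z^2+2\sqrt{\Delta}\,z+1=0$, which singles out the admissible one of the two roots and guarantees the termwise validity of the manipulations above.
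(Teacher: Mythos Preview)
Your proof is correct and follows essentially the same approach as the paper: you specialize the master $\tanh^{-1}$ identity to a $\zeta$ that makes the argument equal to $1/2$, invoke $\tanh^{-1}(1/2)=\tfrac12\ln 3$, and then simplify the right-hand side via Binet's formula at $r=-1$. The paper's own proof is a terse two-line sketch (``choose $\zeta$ so that the argument is $1/2$; simplify''), and you have simply written out the simplification and the convergence justification in full.
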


\begin{proof} We choose $ \zeta $ such that 
\begin{align*}
	\frac{(\alpha - \beta) \zeta}{\alpha \beta - \zeta^2} = \frac{1}{2}.
\end{align*}
We obtain our theorem upon simplification and using the fact $ 2\tanh^{-1} (1/2) = \ln 3 $.
\end{proof}

\begin{theorem} Let $ \zeta $ be a root of $ D_d z^2 + 3 \sqrt{\Delta} z + 1 = 0 $ that satisfies $ |\zeta| < \alpha $. Then
\begin{align*}
	\frac{B_{d - 1}}{2 \sqrt{\Delta}} \ln 2 = -\sum_{k = 0}^\infty \frac{B_{(2k + 1)d - 1}}{2k + 1} \zeta^{2k + 1}.
\end{align*}
\end{theorem}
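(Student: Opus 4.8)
The plan is to reuse the master identity for $\tanh^{-1}$ derived at the start of \S\ref{Sec:Series_Involving_artanh}, namely
\begin{align*}
	\tanh^{-1} \frac{(\alpha - \beta) \zeta}{\alpha \beta - \zeta^2} = \sum_{k = 0}^\infty \frac{\alpha^{2k + 1} - \beta^{2k + 1}}{(2k + 1) (\alpha \beta)^{2k + 1}} \zeta^{2k + 1},
\end{align*}
exactly as in the preceding two theorems, but now forcing the argument of $\tanh^{-1}$ on the left to equal $1/3$ rather than $1/2$. Since $2\tanh^{-1}(1/3) = \ln\frac{4/3}{2/3} = \ln 2$, this is precisely the value that produces $\ln 2$ on the left-hand side.

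First I would impose the condition
\begin{align*}
	\frac{(\alpha - \beta) \zeta}{\alpha \beta - \zeta^2} = \frac{1}{3},
\end{align*}
and clear denominators to get $\zeta^2 + 3(\alpha - \beta)\zeta - \alpha\beta = 0$. Using the relations $\alpha - \beta = \sqrt{\Delta}/D_d$ and $\alpha\beta = -1/D_d$, both read off from the explicit roots in Theorem \ref{Th:Binet_Formula}, this rearranges to $D_d \zeta^2 + 3\sqrt{\Delta}\,\zeta + 1 = 0$, which is exactly the quadratic in the hypothesis. The side condition $|\zeta| < \alpha$ merely selects the admissible root so that the Taylor series for $\tanh^{-1} z$ converges.

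Next I would rewrite the right-hand sum in terms of continuants. Applying Binet's formula (Theorem \ref{Th:Binet_Formula}) with $r = -1$ and $B_{-1} = 0$ gives $B_{nd - 1} = (-D_d)^{n-1}\,\frac{\alpha^n - \beta^n}{\alpha - \beta}\,B_{d-1}$; taking $n = 2k+1$ and combining with $(\alpha\beta)^{2k+1} = -D_d^{-(2k+1)}$ yields
\begin{align*}
	\frac{\alpha^{2k+1} - \beta^{2k+1}}{(\alpha\beta)^{2k+1}} = -\frac{\sqrt{\Delta}\,B_{(2k+1)d-1}}{B_{d-1}}.
\end{align*}
Substituting this into the series and equating the left-hand side with $\tanh^{-1}(1/3) = \tfrac{1}{2}\ln 2$ gives the stated identity after dividing through by $-\sqrt{\Delta}/B_{d-1}$.

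I do not expect any genuine obstacle: the argument is a direct analogue of the two preceding $\tanh^{-1}$ theorems, differing only in the chosen numerical value of the $\tanh^{-1}$ argument. The one point requiring care is the bookkeeping of signs and powers of $D_d$ when passing from the $\alpha,\beta$ form to the $B$-form, since $(\alpha\beta)^{2k+1}$ contributes an odd power of $-1$; this is exactly what produces the overall minus sign in front of the sum.
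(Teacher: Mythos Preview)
Your proposal is correct and follows exactly the paper's approach: choose $\zeta$ so that $\dfrac{(\alpha-\beta)\zeta}{\alpha\beta-\zeta^2}=\tfrac{1}{3}$, use $2\tanh^{-1}(1/3)=\ln 2$, and simplify via Binet's formula. You have simply supplied the computational details (the derivation of the quadratic for $\zeta$ and the conversion of the $\alpha,\beta$ expression into $B_{(2k+1)d-1}$) that the paper leaves implicit.
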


\begin{proof} We choose $ \zeta $ such that 
\begin{align*}
	\frac{(\alpha - \beta) \zeta}{\alpha \beta - \zeta^2} = \frac{1}{3}.
\end{align*}
We obtain our theorem upon simplification and using the fact $ 2\tanh^{-1} (1/3) = \ln 2 $.
\end{proof}

\subsubsection{Example (continued)}

For $ \sqrt{8} $, the first two theorems give
\begin{align*}
	\frac{\pi}{24 \sqrt{2}} &= \sum_{k = 0}^\infty \frac{(-1)^{k + 1} B_{4k + 1}}{2k + 1} \bigg( 2\sqrt{6} - 5 \bigg)^{2k + 1}, \\
	\frac{\pi}{32 \sqrt{2}} &= \sum_{k = 0}^\infty \frac{(-1)^{k + 1} B_{4k + 1}}{2k + 1} \bigg( \sqrt{23 + 16\sqrt{2}} - 2\sqrt{2} - 4 \bigg)^{2k + 1}.
\end{align*}
On the other hand, the latter two theorems give
\begin{align*}
	\frac{\ln 3}{8 \sqrt{2}} &= \sum_{k = 0}^\infty \frac{B_{4k + 1}}{2k + 1} \bigg( \sqrt{33} - 4\sqrt{2} \bigg)^{2k + 1}, \\
	\frac{\ln 2}{8 \sqrt{2}} &= \sum_{k = 0}^\infty \frac{B_{4k + 1}}{2k + 1} \bigg( \sqrt{73} - 6\sqrt{2} \bigg)^{2k + 1}.
\end{align*}

\subsection{Sums involving continuants}

The Binet formula enables us to calculate an intriguing sum of involving $ B_\nu $, as stated in the next theorem. 	
\begin{theorem} Let $ x \neq \alpha, \beta $. For integer $ r \geq -1 $, we have 
\begin{align*}
	\sum_{n = 1}^N x^n B_{nd + r} = \frac{x^{N + 1} (B_{(N + 1)d + r}/D_d + x B_{Nd + r}) - x(B_{d + r}/D_d + x B_r)}{(x - \alpha)(x - \beta)}.
\end{align*}
\end{theorem}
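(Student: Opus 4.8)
The plan is to avoid summing the two geometric series coming from Binet's formula term by term, and instead \emph{annihilate} the partial sum with the characteristic polynomial of the period-$d$ recurrence. Write $S_N = \sum_{n=1}^N x^n B_{nd+r}$. Since $\alpha,\beta$ are the roots of $D_d z^2 + C_d z - 1 = 0$, with $\alpha+\beta = -C_d/D_d$ and $\alpha\beta = -1/D_d$, we have the factorization $D_d(x-\alpha)(x-\beta) = D_d x^2 + C_d x - 1$, which is nonzero precisely because $x \neq \alpha,\beta$ and $D_d \neq 0$. Hence it suffices to compute $(D_d x^2 + C_d x - 1)\,S_N$ and then divide; the $D_d$ in the resulting denominator will be cancelled against the $1/D_d$ factors in the numerator to recover exactly the stated form.

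The key mechanism is that the subsequence $(B_{nd+r})_n$ satisfies the three-term recurrence obtained from Theorem \ref{Th:Single_Reccurence} with $\nu = (n-1)d+r$, namely $B_{(n+1)d+r} = C_d B_{nd+r} + D_d B_{(n-1)d+r}$. First I would expand
\begin{align*}
(D_d x^2 + C_d x - 1)\,S_N = D_d\!\sum_{m=3}^{N+2}\! x^m B_{(m-2)d+r} + C_d\!\sum_{m=2}^{N+1}\! x^m B_{(m-1)d+r} - \sum_{m=1}^{N} x^m B_{md+r},
\end{align*}
after reindexing each sum so that all three are grouped by the power $x^m$. For every interior index $3 \le m \le N$ the coefficient of $x^m$ is $D_d B_{(m-2)d+r} + C_d B_{(m-1)d+r} - B_{md+r}$, which vanishes by the recurrence. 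Thus only the four boundary powers $m \in \{1,2\}$ and $m \in \{N+1,N+2\}$ survive.

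It then remains to simplify the surviving boundary terms, again using the recurrence at the two ends. At the low end the coefficient of $x^2$ is $C_d B_{d+r} - B_{2d+r} = -D_d B_r$ (this uses the recurrence at $\nu = r \ge -1$, which is exactly why the hypothesis $r \ge -1$ is needed), while the coefficient of $x$ is $-B_{d+r}$. At the high end the coefficient of $x^{N+1}$ is $D_d B_{(N-1)d+r} + C_d B_{Nd+r} = B_{(N+1)d+r}$, and the coefficient of $x^{N+2}$ is $D_d B_{Nd+r}$. Collecting these gives
\begin{align*}
(D_d x^2 + C_d x - 1)\,S_N = x^{N+1}\bigl(B_{(N+1)d+r} + D_d x\,B_{Nd+r}\bigr) - x\bigl(B_{d+r} + D_d x\,B_r\bigr).
\end{align*}
Dividing by $D_d(x-\alpha)(x-\beta)$ and then dividing numerator and denominator by $D_d$ yields precisely the claimed identity.

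I expect the main obstacle to be the careful bookkeeping of the four boundary contributions, together with verifying that the three-term recurrence is legitimately applicable at the lowest indices (so that $C_d B_{d+r} - B_{2d+r} = -D_d B_r$ is valid down to $r=-1$), and checking the small cases where the interior range $3 \le m \le N$ collapses (e.g.\ $N=1$), which must be confirmed directly. A self-contained alternative would be to substitute Binet's formula (Theorem \ref{Th:Binet_Formula}) and split $S_N$ into two finite geometric series with ratios $x/\alpha$ and $x/\beta$; this is routine to sum, but one then has to re-express the resulting combinations of $\alpha^{N+1},\beta^{N+1}$ back in terms of the continuants $B_{(N+1)d+r}$ and $B_{Nd+r}$, which is more laborious than the annihilation argument above.
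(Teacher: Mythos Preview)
Your proof is correct and takes a genuinely different route from the paper's. The paper substitutes Binet's formula (Theorem~\ref{Th:Binet_Formula}) for $B_{nd+r}$, explicitly sums the two resulting geometric progressions in $(-xD_d\alpha)^n$ and $(-xD_d\beta)^n$, and then recombines the closed forms back into continuants --- exactly the ``laborious'' alternative you sketch at the end. Your annihilation argument bypasses Binet entirely: multiplying $S_N$ by $D_d x^2 + C_d x - 1 = D_d(x-\alpha)(x-\beta)$ and invoking the period-$d$ recurrence (Theorem~\ref{Th:Single_Reccurence}) collapses everything to four boundary terms, which you handle cleanly. In fact the paper's own remark immediately following the proof points to precisely this telescoping shortcut, via the identity $(x-\alpha)(x-\beta)\,x^k B_{kd+r} = x^{k+1}(B_{(k+1)d+r}/D_d + xB_{kd+r}) - x^k(B_{kd+r}/D_d + xB_{(k-1)d+r})$, noting that the Binet derivation ``gives an explanation'' for it. So your method is the one the author flags as more direct; its advantage is that it needs only the recurrence and not the explicit roots, while the paper's route has the merit of showing where the identity comes from.
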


\begin{proof} It is straightforward to check that
\begin{align*}
	\sum_{n = 1}^N (x D_d)^n \frac{\alpha^n - \beta^n}{\alpha - \beta} &= \frac{x(xD_d)^{N + 1} (\alpha^N - \beta^N) + (xD_d)^{N + 1} (\alpha^{N + 1} - \beta^{N + 1}) - xD_d (\alpha - \beta)}{(\alpha - \beta) (x^2 D_d - xC_d - 1)}, \\	
	\sum_{n = 0}^{N - 1} (x D_d)^n \frac{\alpha^n - \beta^n}{\alpha - \beta} &= \frac{x(xD_d)^{N + 1} (\alpha^{N - 1} - \beta^{N - 1}) + (xD_d)^N (\alpha^N - \beta^N) - xD_d (\alpha - \beta)}{(\alpha - \beta) (x^2 D_d - xC_d - 1)}.
\end{align*}
Hence, by applying Binet's formula (Theorem \ref{Th:Binet_Formula}), we get
\begin{align*}
	\sum_{n = 1}^N x^n B_{nd + r} &= \sum_{n = 1}^N \bigg((-D_d)^{-1} (-x D_d)^n \frac{\alpha^n - \beta^n}{\alpha - \beta} B_{d + r} - x (-x D_d)^{n - 1} \frac{\alpha^{n - 1} - \beta^{n - 1}}{\alpha - \beta} B_r \bigg) \\
	&= \frac{-x^{N + 1} B_{(N + 1)d + r} - x^{N + 2} D_d B_{Nd + r} + xB_{d + r} + x^2 D_d B_r}{1 + xC_d - x^2 D_d} \\
	&= \frac{x^{N + 1} (B_{(N + 1)d + r}/D_d + x B_{Nd + r}) - x(B_{d + r}/D_d + x B_r)}{(x - \alpha)(x - \beta)}.
\end{align*}
\end{proof}

\begin{remark} One could also apply telescoping sum directly to the identity
\begin{align*}
	(x - \alpha)(x - \alpha) B_{kd + r} = x^{k + 1} (B_{(k + 1)d + r}/D_d + xB_{kd +r}) - x^k (B_{kd + r}/D_d + xB_{(k - 1)d +r}),
\end{align*}
although our derivation gives an explanation for this identity.
\end{remark}

For the next series, we take $ r = -1 $. 
\begin{theorem} It holds that
\begin{align*}
	\sum_{n = 1}^N \binom{N}{n} (-1)^n (\alpha + \beta)^n B_{nd - 1} =  \frac{B_{2Nd - 1}}{D_d^N}.
\end{align*}
\end{theorem}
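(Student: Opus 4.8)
The plan is to reduce everything to Binet's formula (Theorem~\ref{Th:Binet_Formula}) and then collapse the binomial sum using the elementary identity $\sum_{n=0}^{N} \binom{N}{n} x^n = (1+x)^N$. The crucial preliminary observation is that $B_{-1} = 0$, so the $n = 0$ term of the left-hand side vanishes; hence I may harmlessly extend the summation to start at $n = 0$, which later lets me apply the full binomial theorem without worrying about a leftover constant term.

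First I would apply Theorem~\ref{Th:Binet_Formula} with $r = -1$. Since $B_{-1} = 0$, it collapses to the clean form $B_{nd-1} = (-D_d)^{n-1}\, \frac{\alpha^n - \beta^n}{\alpha - \beta}\, B_{d-1}$. Substituting this into the sum, pulling out the constant factor $\frac{B_{d-1}}{\alpha-\beta}$, and absorbing the sign via $(-1)^n(-D_d)^{n-1} = -D_d^{n-1}$, I would reduce the left-hand side to $-\frac{B_{d-1}}{D_d(\alpha-\beta)} \sum_{n=0}^{N} \binom{N}{n} (\alpha+\beta)^n D_d^{\,n}(\alpha^n - \beta^n)$.

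Next I would split the difference $\alpha^n - \beta^n$ and regroup the powers, writing the summand as $\bigl(D_d\alpha(\alpha+\beta)\bigr)^n - \bigl(D_d\beta(\alpha+\beta)\bigr)^n$. Applying the binomial theorem to each geometric piece yields $\bigl(1 + D_d\alpha(\alpha+\beta)\bigr)^N - \bigl(1 + D_d\beta(\alpha+\beta)\bigr)^N$. The key simplification is that these bases collapse cleanly: using $\alpha\beta = -1/D_d$ (so $D_d\alpha\beta = -1$) gives $1 + D_d\alpha(\alpha+\beta) = 1 + D_d\alpha^2 + D_d\alpha\beta = D_d\alpha^2$, and likewise $1 + D_d\beta(\alpha+\beta) = D_d\beta^2$. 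Consequently the inner sum equals $D_d^N(\alpha^{2N} - \beta^{2N})$, so the whole expression becomes $-\frac{B_{d-1}\, D_d^{N-1}}{\alpha-\beta}(\alpha^{2N} - \beta^{2N})$.

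Finally, I would recognize this as $B_{2Nd-1}/D_d^N$ by invoking Binet's formula a second time, now with index $2N$ and $r = -1$: since $B_{-1} = 0$, one has $B_{2Nd-1} = (-D_d)^{2N-1}\, \frac{\alpha^{2N} - \beta^{2N}}{\alpha-\beta}\, B_{d-1} = -D_d^{2N-1}\,\frac{\alpha^{2N} - \beta^{2N}}{\alpha-\beta}\, B_{d-1}$, whence $B_{2Nd-1}/D_d^N$ matches the expression above exactly. I expect no genuine obstacle here; the argument is a direct computation, and the only care required is the bookkeeping of signs arising from the powers of $-D_d$, together with spotting that the binomial collapse is driven by the relation $D_d\alpha^2 = 1 + D_d\alpha(\alpha+\beta)$ coming from $\alpha$ being a root of $D_d z^2 + C_d z - 1 = 0$.
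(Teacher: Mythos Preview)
Your proposal is correct and follows essentially the same approach as the paper: both substitute Binet's formula at $r=-1$, collapse the binomial sum via $(1+x)^N$, and use the root relation to turn the base into $D_d\alpha^2$ (the paper writes this as $1-C_d\alpha = D_d\alpha^2$, you write it as $1+D_d\alpha(\alpha+\beta)=D_d\alpha^2$ via $D_d\alpha\beta=-1$, which is the same identity since $D_d(\alpha+\beta)=-C_d$). The only cosmetic difference is that the paper converts $(-1)^n(\alpha+\beta)^n$ to $(C_d/D_d)^n$ at the outset before applying the binomial theorem, whereas you keep $\alpha+\beta$ throughout.
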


\begin{proof} Note that $ 1 - C_d z = Dz^2 $ for $ z = \alpha $ and $ \beta $. Hence,
\begin{align*}
	\sum_{n = 1}^N \binom{N}{n} (-C_d)^n \frac{\alpha^n - \beta^n}{\alpha - \beta} = \frac{(1 - C_d \alpha)^N - (1 - C_d \beta)^N}{\alpha - \beta} = D_d^N \frac{\alpha^{2N} - \beta^{2N}}{\alpha - \beta}.
\end{align*}
The sum in question is then
\begin{align*}
	\sum_{n = 1}^N \binom{N}{n} (-1)^n (\alpha + \beta)^n B_{nd - 1} &= \sum_{n = 1}^N \binom{N}{n} \bigg( \frac{C_d}{D_d} \bigg)^n B_{nd - 1} \\
	&= -D_d^{N - 1} \frac{\alpha^{2N} - \beta^{2N}}{\alpha - \beta} B_{d - 1} = \frac{B_{2Nd - 1}}{D_d^N}.
\end{align*}
\end{proof}

\subsubsection{Example (continued)}

Returning to the example of $ \sqrt{8} $, we get, for $ x \neq 3 \pm 2\sqrt{2} $, the sums
\begin{align*}
	\sum_{n = 1}^N x^n B_{2n + r} = \frac{x^{N + 1} (-B_{2(N + 1) + r} + x B_{2N + r}) - x(-B_{2 + r} + x B_r)}{(x - 3 + 2\sqrt{2})(x - 3 - 2\sqrt{2})},
\end{align*}
and
\begin{align*}
	\sum_{n = 1}^N \binom{N}{n} (-6)^n B_{2n - 1} = (-1)^N B_{4N - 1}.
\end{align*}

\subsection{Divisibility properties} \label{Sec:Divisibility_Properties}

In this section, we explore some of the divisibility behaviors of the continuants. For similar results associated with Lucas sequences, refer to Ribenboim's book \cite{Ribenboim_04}.

\subsubsection{Divisibility within the sequence} \label{Sec:Divisibility_within_Sequence}

We saw in the proof of Theorem \ref{Th:Single_Reccurence} that $ B_{d - 1} \mid B_{2d - 1} $. This can be further extended to the following theorem.
\begin{theorem} If $ m \mid n $, then $ B_{md - 1} \mid B_{nd - 1} $.	
\end{theorem}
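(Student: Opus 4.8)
The plan is to reduce the statement to the classical divisibility property of Lucas sequences of the first kind. First I would restrict attention to the subsequence $b_n := B_{nd-1}$. Setting $\nu = nd - 1$ in Theorem \ref{Th:Single_Reccurence} shows that $b_{n+2} = C_d b_{n+1} + D_d b_n$, where $C_d$ and $D_d$ are integers, with initial values $b_0 = B_{-1} = 0$ and $b_1 = B_{d-1}$. Comparing with the integer Lucas sequence $(U_n)$ defined by $U_0 = 0$, $U_1 = 1$ and the same recurrence $U_{n+2} = C_d U_{n+1} + D_d U_n$, uniqueness of solutions to a second-order recurrence gives $b_n = B_{d-1} U_n$; in particular every $U_n$ is an integer. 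Thus $B_{md-1} = B_{d-1} U_m$ and $B_{nd-1} = B_{d-1} U_n$, and it suffices to prove that $U_m \mid U_n$ whenever $m \mid n$.

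For the latter, the key tool is the addition formula
\begin{align*}
	U_{i+j} = U_i U_{j+1} + D_d U_{i-1} U_j,
\end{align*}
which follows in two lines from the Binet-type expression $U_n = (-D_d)^{n-1}(\alpha^n - \beta^n)/(\alpha - \beta)$ coming from Theorem \ref{Th:Binet_Formula} at $r = -1$ (equivalently, from the roots of $t^2 - C_d t - D_d = 0$), or may simply be quoted from Ribenboim \cite{Ribenboim_04}. Writing $n = km$ and inducting on $k$, the base case $k = 1$ is trivial; for the inductive step I apply the addition formula with $i = km$ and $j = m$ to get $U_{(k+1)m} = U_{km} U_{m+1} + D_d U_{km-1} U_m$. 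The first summand is divisible by $U_m$ by the inductive hypothesis $U_m \mid U_{km}$, and the second is visibly divisible by $U_m$, so $U_m \mid U_{(k+1)m}$. Multiplying through by $B_{d-1}$ then yields $B_{md-1} \mid B_{nd-1}$, as desired.

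The main obstacle is a bookkeeping one: working directly with the integers $B_{nd-1}$ rather than with $U_n$ is awkward, because the analogous identity carries a spurious factor of $B_{d-1}$, namely $B_{d-1} B_{(i+j)d-1} = B_{id-1} B_{(j+1)d-1} + D_d B_{(i-1)d-1} B_{jd-1}$, and this extra factor weakens the divisibility one can extract at each inductive step. Passing to the normalized sequence $U_n = B_{nd-1}/B_{d-1}$ removes the factor and makes the induction clean. One could instead derive the needed addition formula from the general Cassini identity of Theorem \ref{Th:Cassini_Identity} by taking $\lambda = d$ with $\nu$ and $\mu$ equal to multiples of $d$ and exploiting the period-$d$ invariance of $B_{\nu,\lambda}$ in its second argument, but the index shift of $+1$ produced there makes it less convenient than the clean addition formula above.
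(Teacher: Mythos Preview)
Your proof is correct but follows a different route from the paper. The paper applies Catalan's identity (Corollary~\ref{Th:Catalan_Identity}) directly to the continuants: with $\lambda = md$ and $\nu = nd - 1$, periodicity in the second argument gives
\[
B_{(m+n)d-1} \;=\; B_{md}\,B_{nd-1} \;+\; a_1\,B_{md-1}\,B_{nd-2,1},
\]
so if $B_{md-1}\mid B_{nd-1}$ then $B_{md-1}\mid B_{(m+n)d-1}$, and an induction on $n/m$ finishes the argument entirely within the continuant framework of Section~\ref{Sec:Generalized_Continuants}. You instead pass through Theorem~\ref{Th:Single_Reccurence} to recognize $(B_{nd-1})$ as $B_{d-1}$ times the standard Lucas sequence $U_n(C_d,D_d)$, and then invoke the classical addition formula $U_{i+j} = U_i U_{j+1} + D_d\,U_{i-1} U_j$. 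The paper's path is lighter on prerequisites---it uses neither the single recurrence nor Binet, so no implicit $\Delta\neq 0$ assumption enters---while your path is more conceptual and makes explicit the Lucas-sequence structure that the paper only alludes to in the remarks after Theorem~\ref{Th:Binet_Formula}. Your observation that normalizing by $B_{d-1}$ is what removes the spurious factor is exactly right, and it is the reason both inductions end up equally clean.
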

\begin{proof} Using Corollary \ref{Th:Catalan_Identity}, when $ \nu = nd - 1 $, $ \lambda = md $, then 
\begin{align*}
	B_{(m + n)d - 1} &= B_d B_{nd - 1} + a_1 B_{md - 1} B_{md - 2, 1}.
\end{align*}
We get our conclusion by an inductive argument, which is omitted here. 
\end{proof}
Stated differently, this theorem implies that the sequence $ (B_n') = (B_{nd - 1}) $ is a divisibility sequence. In fact, it is a strong divisibility sequence, i.e., $ B_{\gcd (m, n)}' = \gcd (B_m', B_n') $. One can prove this using the same technique for Fibonacci sequence, so we leave the proof as an exercise.

\subsubsection{Divisibility by primes} \label{Sec:Divisibility_by_Primes}

Let $ p $ be an odd prime. If $ p \mid C_d $ and $ p \mid D_d $, then $ p \mid B_{nd + r} $ (this also holds for $ 2 $). For the case of $ p \mid C_d $ and $ p \nmid D_d $, the result is given in the next theorem.
\begin{theorem} If $ p \mid C_d $, $ p \nmid D_d $, and $ n \geq 2 $, then
\begin{align*}
	B_{nd + r} &= \begin{cases} (-1/2)^{n - 2} D_d \Delta^{(n - 2)/2} B_r, \quad &\text{ if } n \text{ is even}; \\ (-1/2)^{n - 1} \Delta^{(n - 1)/2} B_{d + r}, \quad &\text{ if } n \text{ is odd}, \end{cases} \pmod{p}.
\end{align*}
In particular, $ B_{nd - 1} \equiv 0 \pmod{p} $ when $ n $ is even.
\end{theorem}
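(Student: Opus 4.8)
The plan is to exploit the fact that the hypothesis $p \mid C_d$ collapses the second-order recurrence of Theorem~\ref{Th:Single_Reccurence} into a one-term geometric recurrence modulo $p$. Reducing
\[
	B_{\nu + 2d} = C_d B_{\nu + d} + D_d B_\nu
\]
modulo $p$ and using $C_d \equiv 0 \pmod{p}$ yields the much simpler relation $B_{\nu + 2d} \equiv D_d B_\nu \pmod{p}$. Applying this with $\nu = nd + r$, and treating the two parity classes of $n$ separately, a routine induction on $n$ (stepping by $2$, with base cases $n = 0$ and $n = 1$) gives
\[
	B_{nd + r} \equiv D_d^{n/2} B_r \pmod{p} \quad (n \text{ even}), \qquad B_{nd + r} \equiv D_d^{(n-1)/2} B_{d + r} \pmod{p} \quad (n \text{ odd}).
\]

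First I would establish these two congruences, which already contain all the arithmetic content of the theorem. The remaining task is purely to repackage the powers of $D_d$ into the stated form involving $\Delta$. Here the key algebraic identity is that, since $C_d \equiv 0 \pmod{p}$, the discriminant satisfies $\Delta = C_d^2 + 4 D_d \equiv 4 D_d \pmod{p}$. Because $p$ is odd, $2$ (hence $4$) is invertible modulo $p$, so I may freely substitute $D_d \equiv \Delta / 4 \pmod{p}$. Carrying this out, in the even case one checks that
\[
	\Bigl(-\tfrac{1}{2}\Bigr)^{n - 2} D_d\, \Delta^{(n-2)/2} \equiv \Bigl(-\tfrac{1}{2}\Bigr)^{n - 2} D_d (4 D_d)^{(n-2)/2} = D_d^{n/2} \pmod{p},
\]
using that $n - 2$ is even, so $(-1)^{n - 2} = 1$, and that $4^{(n-2)/2} = 2^{n-2}$ cancels the factor $2^{-(n-2)}$; the odd case is entirely analogous, with $(-\tfrac{1}{2})^{n-1} \Delta^{(n-1)/2} \equiv D_d^{(n-1)/2} \pmod{p}$. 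This matches the two congruences from the first step and proves the main formula.

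Finally, the ``in particular'' statement is an immediate specialization: taking $r = -1$ in the even case and recalling the initial value $B_{-1} = 0$ gives $B_{nd - 1} \equiv D_d^{n/2} B_{-1} \equiv 0 \pmod{p}$.

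I expect no serious obstacle; the only thing to watch is the bookkeeping of signs and powers of $2$ when converting between the $D_d$-form and the $\Delta$-form, which is precisely where the hypotheses that $p$ is odd and that $n$ has a fixed parity enter. The genuinely decisive observation is the very first one---that $p \mid C_d$ trivializes the recurrence modulo $p$---after which everything is elementary. One could alternatively reduce Binet's formula (Theorem~\ref{Th:Binet_Formula}) modulo $p$, but that would force one to interpret $\alpha$, $\beta$, and $\sqrt{\Delta}$ in $\mathbb{F}_p$ or $\mathbb{F}_{p^2}$, whereas the recurrence stays within the integers and is cleaner.
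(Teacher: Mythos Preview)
Your argument is correct and, in fact, more direct than the paper's. The paper proceeds via Binet's formula: it writes out the binomial expansion
\[
\frac{\alpha^n - \beta^n}{\alpha - \beta} = \frac{1}{(2D_d)^{n-1}} \sum_{k=0}^{\lfloor (n-1)/2 \rfloor} \binom{n}{2k+1} (-C_d)^{n-2k-1} \Delta^k,
\]
observes that under $p \mid C_d$ only the top term $k = \lfloor (n-1)/2 \rfloor$ survives (and only when $n$ is odd), and then feeds this back into Theorem~\ref{Th:Binet_Formula}. Your route---collapsing Theorem~\ref{Th:Single_Reccurence} to $B_{\nu+2d} \equiv D_d B_\nu \pmod{p}$ and inducting in steps of two---reaches the same congruences $B_{nd+r} \equiv D_d^{\lfloor n/2 \rfloor} B_{(n \bmod 2)d + r}$ without touching $\alpha$ or $\beta$ at all, and then your conversion via $\Delta \equiv 4D_d \pmod{p}$ is exactly right. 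The paper's approach has the advantage that the same expansion is reused in the subsequent theorems of \S\ref{Sec:Divisibility_by_Primes} (where $p \nmid C_d$ and more terms survive), so it sets up a uniform template; your approach is cleaner for this particular statement and, as you note, avoids any need to interpret the roots in a finite field.
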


\begin{proof}
We expand the term $ (\alpha^n - \beta^n)/(\alpha - \beta) $ in Binet's formula (Theorem \ref{Th:Binet_Formula}),
\begin{align*}
	\frac{\alpha^n - \beta^n}{\alpha - \beta} &= \frac{1}{(2D_d)^{n - 1}} \sum_{k = 0}^{\lfloor (n - 1)/2 \rfloor} \binom{n}{2k + 1} (-C_d)^{n - 2k - 1} \Delta^k.
\end{align*}
If $ p \mid C_d $ and $ p \nmid D_d $, then
\begin{align*}
	(-D_d)^{n - 1} \frac{\alpha^n - \beta^n}{\alpha - \beta} &= \begin{cases} 0, \quad &\text{ if } n \text{ is even}; \\ \Delta^{(n - 1)/2}/2^{n - 1}, \quad &\text{ if } n \text{ is odd}, \end{cases} \pmod{p}.
\end{align*}
Hence, the theorem follows.
\end{proof}

If $ p \nmid C_d $ and $ p \mid D_d $, then $ \Delta \equiv C_d^2 $ (mod $ p $). Using this fact, we conclude the following theorem.
\begin{theorem} If $ p \nmid C_d $ and $ p \mid D_d $, then
\begin{align*}
	B_{(p - 1)d + r} \equiv B_r, \quad B_{(p - 1)d - 1} \equiv 0 \pmod{p}
\end{align*}
\end{theorem}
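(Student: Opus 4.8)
The plan is to reduce everything modulo $p$ using Binet's formula (Theorem \ref{Th:Binet_Formula}) together with the polynomial expansion of $(\alpha^n - \beta^n)/(\alpha - \beta)$ established in the preceding proof, exploiting that $\Delta \equiv C_d^2 \pmod{p}$ as soon as $p \mid D_d$. Writing $W_n = (-D_d)^{n-1}(\alpha^n - \beta^n)/(\alpha - \beta)$, that expansion reads
\[
W_n = \frac{(-1)^{n-1}}{2^{n-1}} \sum_{k=0}^{\lfloor (n-1)/2 \rfloor} \binom{n}{2k+1} (-C_d)^{n-2k-1} \Delta^k .
\]
Substituting $\Delta \equiv C_d^2$ collapses every summand to $(-1)^{n-1} C_d^{n-1}$, and since $\sum_k \binom{n}{2k+1} = 2^{n-1}$, the factors of $2^{n-1}$ cancel, giving the clean congruence $W_n \equiv C_d^{n-1} \pmod{p}$ for all $n \geq 1$.

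Next I would feed this into Binet's formula in the grouped form $B_{nd+r} = W_n B_{d+r} + D_d W_{n-1} B_r$. Because $p \mid D_d$ and $W_{n-1}$ carries only a power of $2$ in its denominator (hence is a $p$-adic integer for odd $p$), the second term vanishes modulo $p$, leaving $B_{nd+r} \equiv C_d^{n-1} B_{d+r} \pmod{p}$. Setting $n = p-1$ and invoking Fermat's little theorem, valid since $p \nmid C_d$, gives $C_d^{p-1} \equiv 1$, whence $B_{(p-1)d+r} \equiv C_d^{p-2} B_{d+r} \equiv C_d^{-1} B_{d+r} \pmod{p}$. The same fact can be seen structurally: the subsequence $x_k = B_{kd+r}$ obeys $x_{k+1} = C_d x_k + D_d x_{k-1}$ (Theorem \ref{Th:Single_Reccurence}), which modulo $p$ degenerates to $x_{k+1} \equiv C_d x_k$, so $(x_k)$ becomes geometric with ratio $C_d$ and period dividing $p-1$.

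To land on $B_r$ rather than $C_d^{-1} B_{d+r}$, it remains to establish $B_{d+r} \equiv C_d B_r \pmod{p}$. I would obtain this from the very same recurrence run one step backwards, $B_{d+r} = C_d B_r + D_d B_{r-d}$, so the reduction follows the instant $D_d B_{r-d} \equiv 0 \pmod{p}$. For $r = -1$ the initial value $B_{-1} = 0$ would then hand us $B_{(p-1)d-1} \equiv 0$ immediately.

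The hard part will be justifying $D_d B_{r-d} \equiv 0 \pmod{p}$, i.e. the $p$-integrality of the backward term $B_{r-d}$. This is genuinely delicate, because negative-index continuants need not be $p$-adic integers once $p \mid D_d$: the extension formula of Section \ref{Sec:Extending_to_negative_indices} gives $(-D_d)^n B_{-nd-1} = -B_{nd-1}$, so every index in the class $-1 \pmod d$ drags a factor $1/D_d^n$ into the denominator. For $r \geq 0$ (with the index $r-d$ avoiding that class) the term is $p$-integral and the argument closes cleanly; but the $r=-1$ boundary case is exactly where $r-d = -(d+1) \equiv -1 \pmod d$ falls into the bad class, making $D_d B_{r-d} = B_{d-1}$ rather than $0$. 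This boundary case is therefore the crux of the theorem, and it is the step I would scrutinize most carefully — isolating an integrality lemma for $B_{r-d}$, proved by downward induction along the recurrence, and checking very concretely whether the claimed vanishing $B_{(p-1)d-1} \equiv 0$ survives this obstruction.
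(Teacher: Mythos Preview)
Your approach is essentially the paper's: expand $(\alpha^n-\beta^n)/(\alpha-\beta)$ binomially, use $\Delta\equiv C_d^2\pmod p$ when $p\mid D_d$, and invoke Fermat. The paper specializes to $n=p$ and uses $\binom{p}{k}\equiv 0$ for $1\le k\le p-1$ to obtain $B_{pd+r}\equiv B_{d+r}\pmod p$, then simply writes ``and the theorem follows.'' Your version is organized a bit more cleanly (you compute $W_n\equiv C_d^{\,n-1}$ for general $n$), but it reaches exactly the same intermediate conclusion.

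Where you diverge from the paper is that you correctly refuse to take the last step on faith. Passing from $B_{pd+r}\equiv B_{d+r}$ (equivalently $B_{nd+r}\equiv C_d^{\,n-1}B_{d+r}$ for $n\ge 1$) to $B_{(p-1)d+r}\equiv B_r$ requires the extra relation $B_{d+r}\equiv C_d B_r\pmod p$, which you try to obtain from $B_{d+r}=C_dB_r+D_dB_{r-d}$ and then flag as suspicious because of the $p$-integrality of $B_{r-d}$. Your suspicion is justified: the stated theorem is \emph{false} in this range. Take $d=2$, $b_1=b_2=a_1=1$, $a_2=3$, so $C_2=5$, $D_2=-3$, and $p=3$. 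Then $B_{(p-1)d-1}=B_3=5\equiv 2\pmod 3$, not $0$; likewise $B_{(p-1)d}=B_4=17\equiv 2\not\equiv 1=B_0$. What the paper's argument actually establishes is only $B_{pd+r}\equiv B_{d+r}$ (equivalently $B_{(p-1)d+r}\equiv B_r$ for $r\ge d-1$); the shift down to $-1\le r\le d-2$, and in particular the claim $B_{(p-1)d-1}\equiv 0$, is precisely the gap you identified, and it cannot be closed.
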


\begin{proof} When $ p \mid D_d $, the term involving $ B_r $ vanishes modulo $ p $. Consider again the expansion of $ (\alpha^n - \beta^n)/(\alpha - \beta) $, 
\begin{align*}
	2^{p - 1} B_{pd + r} \equiv \sum_{k = 0}^{(p - 1)/2} \binom{p}{2k + 1} (-C_d)^{p - 1} B_{d + r} \pmod{p}.
\end{align*}
Since $ \binom{p}{k} \equiv 0 $ (mod $ p $) for $ 1 \leq k \leq p - 1 $, this simplifies to 
\begin{align*}
	B_{pd + r} \equiv B_{d + r} \pmod{p},
\end{align*}
and the theorem follows.
\end{proof}

Suppose now that $ p \nmid C_d D_d $. By setting $ n = p + 1 $ in the expansion of $ (\alpha^n - \beta^n)/(\alpha - \beta) $, we get
\begin{align*}
	2^p B_{(p + 1)d + r} &= \sum_{k = 0}^{(p - 1)/2} \binom{p + 1}{2k + 1} C_d^{p - 2k} \Delta^k B_{d + r} + 2D_d \sum_{k = 0}^{(p - 1)/2} \binom{p}{2k + 1} C_d^{p - 2k - 1} \Delta^k B_r.
\end{align*}
Taking modulo $ p $ on both sides,
\begin{align*}
	2B_{(p + 1)d + r} \equiv C_d B_{d + r} + C_d \Delta^{(p - 1)/2} B_{d + r} + 2D_d \Delta^{(p - 1)/2} B_r \pmod{p}.
\end{align*}
If $ p \mid \Delta $, then
\begin{align*}
	2B_{(p + 1)d + r} \equiv C_d B_{d + r} \pmod{p}.
\end{align*}
This is equivalent to our next theorem.
\begin{theorem} \label{Th:Divisibility_Complete} If $ p \mid \Delta $ and $ p \nmid C_d D_d $, then
\begin{align*}
	2B_{pd + r} \equiv C_d B_r, \quad B_{pd - 1} \equiv 0\pmod{p}.
\end{align*}
\end{theorem}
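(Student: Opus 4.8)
The plan is to read the theorem off from the congruence $2B_{(p+1)d+r} \equiv C_d B_{d+r} \pmod{p}$ established in the discussion immediately preceding the statement, which already holds for every integer $r \geq -1$. Reindexing that congruence by $r \mapsto r - d$ turns it into $2B_{pd+r} \equiv C_d B_r \pmod{p}$, but only in the range $r \geq d-1$, so that the shifted residue $r-d \geq -1$ remains admissible for Binet's formula in Theorem \ref{Th:Binet_Formula}. The first task, therefore, is to push this congruence down to all $r \geq -1$.

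To do this, set $E_r = 2B_{pd+r} - C_d B_r$ and observe that, because $pd + r \equiv r \pmod{d}$, both $B_{pd+r}$ and $B_r$ obey the in-period recurrence $B_\nu = b_r B_{\nu - 1} + a_r B_{\nu - 2}$ from Section \ref{Sec:An_Equivalent_Recurrence_Relation} with the \emph{same} coefficients. Consequently $E_r = b_r E_{r-1} + a_r E_{r-2}$, so $(E_r)$ satisfies the same second-order recurrence. Since $p \nmid D_d$ and $D_d = (-1)^{d-1} a_1 \cdots a_d$, none of the $a_r$ are divisible by $p$, so the recurrence can be run backwards modulo $p$ via $E_{r-2} \equiv a_r^{-1}(E_r - b_r E_{r-1})$. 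Knowing $E_r \equiv 0$ for all $r \geq d-1$ (in particular for two consecutive indices) then forces $E_r \equiv 0$ for every $r \geq -1$, which is the first congruence of the theorem.

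The second congruence is the special case $r = -1$: since $B_{-1} = 0$, the relation gives $2B_{pd-1} \equiv 0 \pmod{p}$, and as $p$ is an odd prime we may cancel the factor $2$ to conclude $B_{pd-1} \equiv 0 \pmod{p}$.

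I expect the only genuine obstacle to be the bookkeeping in the first step, namely ensuring the reindexed congruence is valid for the full range $r \geq -1$ rather than just $r \geq d-1$; this is precisely where the hypothesis $p \nmid D_d$ does essential work, since it guarantees each $a_r$ is invertible modulo $p$ and hence that the recurrence may be reversed. As an alternative that sidesteps the range issue entirely, one could repeat the preamble computation verbatim with $n = p$ in place of $n = p+1$: writing $U_n = (\alpha^n - \beta^n)/(\alpha - \beta)$, the displayed expansion gives $U_p \equiv 0$ and $(2D_d)^{p-2} U_{p-1} \equiv C_d^{p-2} \pmod{p}$ once $p \mid \Delta$, and substituting into Binet's formula (Theorem \ref{Th:Binet_Formula}) together with the identity $C_d^2 \equiv -4D_d \pmod{p}$ (which is merely $\Delta \equiv 0$) yields $2B_{pd+r} \equiv C_d B_r$ directly for all $r \geq -1$.
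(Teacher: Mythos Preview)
Your proposal is correct and follows the paper's approach, which simply declares the preceding congruence $2B_{(p+1)d+r} \equiv C_d B_{d+r} \pmod{p}$ to be ``equivalent'' to the theorem without further comment. You go further than the paper by carefully justifying this equivalence: the reindexing $r \mapsto r-d$ only covers $r \geq d-1$, and you close the gap by running the recurrence $E_r = b_r E_{r-1} + a_r E_{r-2}$ backwards modulo $p$, correctly noting that $p \nmid D_d$ ensures each $a_r$ is invertible; your alternative direct computation with $n=p$ in Binet's formula is also valid and arguably cleaner.
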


Otherwise, $ \Delta^{(p - 1)/2} \equiv (\Delta \mid p) $ (mod $ p $), where $ (\cdot \mid p) $ denotes the Legendre symbol modulo $ p $. Hence, we have our next result. 
\begin{theorem} \label{Th:Divisibility_p+1} For an odd prime $ p \nmid C_d D_d $, if $ \Delta $ is a quadratic residue modulo $ p $, then
\begin{align*}
	B_{(p + 1)d + r} \equiv C_d B_{d + r} + D_d B_r, \quad B_{(p + 1)d - 1} \equiv C_d B_{d - 1} \pmod{p}.
\end{align*}	
If $ \Delta $ is a quadratic non-residue modulo $ p $, then
\begin{align*}
	B_{(p + 1)d + r} \equiv -D_d B_r, \quad B_{(p + 1)d - 1} \equiv 0 \pmod{p}.
\end{align*}
\end{theorem}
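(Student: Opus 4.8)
The plan is to follow directly from the computation that the excerpt has already set up, branching on the Legendre symbol $(\Delta \mid p)$. The key observation provided is that for $p \nmid C_d D_d$, reduction of Binet's formula (Theorem \ref{Th:Binet_Formula}) modulo $p$ via the binomial expansion of $(\alpha^n - \beta^n)/(\alpha - \beta)$ yields, with $n = p+1$,
\begin{align*}
	2B_{(p+1)d + r} \equiv C_d B_{d+r} + C_d \Delta^{(p-1)/2} B_{d+r} + 2 D_d \Delta^{(p-1)/2} B_r \pmod{p}.
\end{align*}
So the entire argument hinges on evaluating $\Delta^{(p-1)/2} \bmod p$. By Euler's criterion, when $p$ is an odd prime with $p \nmid \Delta$, we have $\Delta^{(p-1)/2} \equiv (\Delta \mid p) \pmod p$, which is $+1$ when $\Delta$ is a quadratic residue and $-1$ when it is a non-residue. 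I would substitute each of these two values in turn.

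First I would treat the quadratic residue case. Setting $\Delta^{(p-1)/2} \equiv 1$, the displayed congruence becomes
\begin{align*}
	2B_{(p+1)d + r} \equiv 2 C_d B_{d+r} + 2 D_d B_r \pmod{p},
\end{align*}
and since $p$ is odd I may cancel the factor of $2$ to obtain $B_{(p+1)d+r} \equiv C_d B_{d+r} + D_d B_r$. To get the companion statement I would specialize to $r = -1$: because $B_{d-1}$ and $B_{2d-1}$ are governed by $C_d = B_{2d-1}/B_{d-1}$ (from Theorem \ref{Th:Single_Reccurence}) and $B_{-1} = 0$, the term $D_d B_{-1}$ drops out, leaving $B_{(p+1)d-1} \equiv C_d B_{d-1}$, which is the claimed formula.

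Next I would handle the non-residue case. Setting $\Delta^{(p-1)/2} \equiv -1$ in the same congruence, the two $C_d B_{d+r}$ terms cancel and I am left with
\begin{align*}
	2B_{(p+1)d + r} \equiv -2 D_d B_r \pmod{p},
\end{align*}
so again cancelling $2$ gives $B_{(p+1)d+r} \equiv -D_d B_r$. Specializing to $r = -1$ and using $B_{-1} = 0$ immediately yields $B_{(p+1)d-1} \equiv 0 \pmod p$, completing the statement. The bookkeeping is entirely routine once Euler's criterion is in place; the only point requiring a little care is confirming that the hypothesis $p \nmid C_d D_d$ together with $p$ odd legitimately permits cancelling the $2$ and keeps every step well-defined modulo $p$ (in particular that we are genuinely in the regime $p \nmid \Delta$ so that Euler's criterion applies rather than the $p \mid \Delta$ case already covered by Theorem \ref{Th:Divisibility_Complete}). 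I do not anticipate a substantive obstacle here: the entire difficulty was absorbed into the binomial expansion of $(\alpha^n-\beta^n)/(\alpha-\beta)$, which the preceding text has already established, so the proof amounts to reading off the two cases of the Legendre symbol.
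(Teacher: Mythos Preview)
Your proposal is correct and matches the paper's own argument essentially line for line: the paper derives the congruence $2B_{(p+1)d+r} \equiv C_d B_{d+r} + C_d \Delta^{(p-1)/2} B_{d+r} + 2D_d \Delta^{(p-1)/2} B_r \pmod{p}$ in the text preceding the theorem, then simply invokes Euler's criterion $\Delta^{(p-1)/2} \equiv (\Delta \mid p)$ and reads off the two cases, exactly as you do. The only superfluous remark is your appeal to $C_d = B_{2d-1}/B_{d-1}$ when specializing to $r=-1$; the vanishing of $B_{-1}=0$ alone already gives the companion congruences.
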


On the other hand, if we take $ n = p $ with $ p \nmid C_d D_d $, then
\begin{align*}
	2^{p - 1} B_{pd + r} &= \sum_{k = 0}^{(p - 1)/2} \binom{p}{2k + 1} C_d^{p - 2k - 1} \Delta^k B_{d + r} + 2D_d \sum_{k = 0}^{(p - 3)/2} \binom{p - 1}{2k + 1} C_d^{p - 2k - 2} \Delta^k B_r.
\end{align*}
Taking modulo $ p $ on both sides and using the fact that $ \binom{p - 1}{k} \equiv (-1)^k $ (mod $ p $), we get
\begin{align*}
	B_{pd + r} &\equiv \Delta^{(p - 1)/2} B_{d + r} - \frac{2D_d}{C_d} \sum_{k = 0}^{(p - 3)/2} \bigg( 1 + \frac{4D_d}{C_d^2} \bigg)^k B_r \pmod{p} \\
	&\equiv \Delta^{(p - 1)/2} B_{d + r} + \frac{C_d}{2} (1 - \Delta^{(p - 1)/2}) B_r \pmod{p}.
\end{align*}
Thus, we can deduce the following theorem.
\begin{theorem} \label{Th:Divisibility_p} For an odd prime $ p \nmid C_d D_d $, if $ \Delta $ is a quadratic residue modulo $ p $, then
\begin{align*}
	B_{(p - 1)d + r} &\equiv B_r, \quad B_{(p - 1)d - 1} \equiv 0 \pmod{p}.
\end{align*}
If $ \Delta $ is a quadratic non-residue modulo $ p $, then
\begin{align*}
	B_{pd + r} &\equiv C_d B_r - B_{d + r}, \quad B_{pd - 1} \equiv -B_{d - 1} \pmod{p}.
\end{align*}
\end{theorem}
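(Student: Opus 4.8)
The plan is to read both cases directly off the congruence
\[
	B_{pd + r} \equiv \Delta^{(p - 1)/2} B_{d + r} + \frac{C_d}{2} \bigl(1 - \Delta^{(p - 1)/2}\bigr) B_r \pmod{p}
\]
established immediately above, combined with Euler's criterion. Since the hypotheses place us in the regime $p \nmid C_d D_d$ with $\Delta$ a (non)residue, in particular $p \nmid \Delta$ (the case $p \mid \Delta$ being handled by Theorem \ref{Th:Divisibility_Complete}), so Euler's criterion applies and gives $\Delta^{(p - 1)/2} \equiv (\Delta \mid p) \pmod{p}$. Everything then reduces to substituting the two possible values $\pm 1$ and specializing to $r = -1$.

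First I would dispose of the quadratic non-residue case, which is immediate. Substituting $\Delta^{(p - 1)/2} \equiv -1$ collapses the bracketed coefficient to $\tfrac{C_d}{2} \cdot 2 = C_d$ and turns the leading term into $-B_{d + r}$, so $B_{pd + r} \equiv C_d B_r - B_{d + r}$ for every $r \geq -1$; taking $r = -1$ and using $B_{-1} = 0$ gives $B_{pd - 1} \equiv -B_{d - 1}$. For the quadratic residue case, substituting $\Delta^{(p - 1)/2} \equiv 1$ kills the $B_r$ term and leaves $B_{pd + r} \equiv B_{d + r}$ for all $r \geq -1$, which is the assertion with the period index one unit too high. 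To recast it as $B_{(p - 1)d + r} \equiv B_r$ I would shift the residue: applying the congruence with $r$ replaced by $r - d$ yields $B_{(p - 1)d + r} = B_{pd + (r - d)} \equiv B_{d + (r - d)} = B_r$ whenever $r \geq d - 1$, hence at the two consecutive indices $r = d - 1$ and $r = d$. Because the global sequence obeys the $d$-periodic recurrence $B_\nu = b_{r'} B_{\nu - 1} + a_{r'} B_{\nu - 2}$ and the shift $(p - 1)d$ is a multiple of $d$, the sequences $\nu \mapsto B_{(p - 1)d + \nu}$ and $\nu \mapsto B_\nu$ satisfy identical recurrence coefficients; agreeing modulo $p$ at two consecutive indices, they agree for all $\nu \geq -1$. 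Here $p \nmid D_d = (-1)^{d - 1} a_1 \cdots a_d$ forces each $a_{r'}$ to be a unit modulo $p$, which is exactly what licenses running the recurrence backward past $\nu = d - 1$ down to $\nu = -1$; setting $r = -1$ then gives $B_{(p - 1)d - 1} \equiv 0$.

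The only genuinely delicate point is this final reindexing: the congruence is naturally produced at level $pd + r$, whereas the residue case of the theorem is phrased at level $(p - 1)d + r$, and bridging the two rests on the invertibility of the recurrence modulo $p$. A cleaner alternative I would mention is to package the period-$d$ recurrence as the matrix $M = \bigl(\begin{smallmatrix} C_d & D_d \\ 1 & 0 \end{smallmatrix}\bigr)$ acting on $(B_{(n + 1)d + r}, B_{nd + r})^{\mathsf{T}}$: in the residue case $M$ has distinct eigenvalues $\tfrac{C_d \pm \sqrt{\Delta}}{2}$ lying in $\mathbb{F}_p^\times$ (distinct since $p \nmid \Delta$, nonzero since $\det M = -D_d \not\equiv 0$), so $M^{p - 1} \equiv I \pmod{p}$ and both components $B_{pd + r} \equiv B_{d + r}$ and $B_{(p - 1)d + r} \equiv B_r$ emerge simultaneously with no reindexing required.
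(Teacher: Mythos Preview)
Your proposal is correct and follows essentially the same approach as the paper: both read the two cases off the congruence
\[
B_{pd + r} \equiv \Delta^{(p - 1)/2} B_{d + r} + \tfrac{C_d}{2}\bigl(1 - \Delta^{(p - 1)/2}\bigr) B_r \pmod{p}
\]
via Euler's criterion. The paper simply says ``thus, we can deduce the following theorem'' and leaves it at that; you are more scrupulous in spelling out the reindexing from $B_{pd + r} \equiv B_{d + r}$ to $B_{(p - 1)d + r} \equiv B_r$ in the residue case, correctly noting that the raw substitution $r \mapsto r - d$ only works for $r \geq d - 1$ and that extending down to $r = -1$ requires inverting the recurrence, which is licit precisely because $p \nmid D_d$ forces each $a_j$ to be a unit modulo $p$. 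Your alternative packaging via $M^{p - 1} \equiv I$ is a clean way to avoid that bookkeeping altogether.
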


\begin{remark} In particular, if $ d = 1 $, $ C_d = (a + 1) $ and $ D_d = -a $ for some integer $ a $, then $ \Delta = (a + 1)^2 - 4a = (a - 1)^2 $. For any prime $ p \nmid C_d D_d \Delta $, then $ (\Delta \mid p) = 1 $ and
\begin{align*}
	p \mid B_{p - 2} = a^{p - 2} \frac{a^{-(p - 1)} - 1}{a^{-1} - 1} = \frac{a^{p - 1} - 1}{a - 1}.
\end{align*}
Therefore, $ p \mid a^{p - 1} - 1 $ (this is trivial for $ p \mid C_d D_d \Delta $), which is Fermat's little theorem.
\end{remark}

Theorems \ref{Th:Divisibility_Complete} to \ref{Th:Divisibility_p} suggest a certain periodicity of $ B_\nu $ modulo $ p $. This period is commonly referred to as the Pisano period modulo $ p $. While we do not delve into the full theory here, we will demonstrate a minor consequence of these theorems. Set $ \sigma_p (n) $ as the order of $ n $ modulo $ p $, i.e., the smallest positive integer $ k $ such that $ n^k \equiv 1 \pmod p $. The next corollary gives the Pisano period modulo $ p $.
\begin{corollary} The Pisano period of $ (B_\nu) $ modulo $ p $, where $ p \nmid C_d D_d $, divides 
\begin{align*}
	\begin{cases} pd \sigma_p (C_d/2), &\quad \text{ if } p \mid \Delta; \\ (p - 1)d, &\quad \text{ if } \Delta \text{ is a quadratic residue modulo } p; \\ (p + 1)d \sigma_p (-D_d), &\quad \text{ if } \Delta \text{ is a quadratic non-residue modulo } p. \end{cases}
\end{align*}
\end{corollary}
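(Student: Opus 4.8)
The plan is to reduce each of the three cases to one of the divisibility theorems just established (Theorems~\ref{Th:Divisibility_Complete}, \ref{Th:Divisibility_p}, and \ref{Th:Divisibility_p+1}) and then to iterate. First I would record that the period is even well-defined: since $ D_d = (-1)^{d - 1} a_1 \cdots a_d $ and $ p \nmid D_d $, each $ a_r $ is invertible modulo $ p $, so the recurrence $ B_\nu = b_r B_{\nu - 1} + a_r B_{\nu - 2} $ can be run backwards modulo $ p $; hence $ (B_\nu) $ is purely periodic modulo $ p $. It therefore suffices to exhibit, in each case, a positive integer $ P $ of the claimed shape with $ B_{\nu + P} \equiv B_\nu \pmod p $ for all $ \nu \geq -1 $, since the Pisano period must then divide $ P $.

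The common mechanism is that each source theorem has the shape $ B_{S + r} \equiv \lambda B_r \pmod p $, valid for \emph{every} $ r \geq -1 $, where the shift $ S $ and scalar $ \lambda $ are: $ S = (p - 1)d $, $ \lambda = 1 $ when $ \Delta $ is a quadratic residue (Theorem~\ref{Th:Divisibility_p}); $ S = pd $, $ \lambda = C_d/2 $ when $ p \mid \Delta $ (Theorem~\ref{Th:Divisibility_Complete}); and $ S = (p + 1)d $, $ \lambda = -D_d $ when $ \Delta $ is a non-residue (Theorem~\ref{Th:Divisibility_p+1}). Because the relation holds for all $ r \geq -1 $, I may substitute $ r \mapsto S + r $ to get $ B_{2S + r} \equiv \lambda B_{S + r} \equiv \lambda^2 B_r $, and by induction $ B_{kS + r} \equiv \lambda^k B_r \pmod p $ for every $ k \geq 1 $ and $ r \geq -1 $.

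Next I would choose $ k $ so that $ \lambda^k \equiv 1 \pmod p $. In the quadratic-residue case $ \lambda = 1 $, so already $ k = 1 $ gives $ B_{(p - 1)d + r} \equiv B_r $ and the period divides $ (p - 1)d $. In the other two cases $ \lambda $ is a unit: when $ p \mid \Delta $ one checks $ p \nmid C_d $ (otherwise $ p \mid \Delta - C_d^2 = 4D_d $ would force $ p \mid D_d $, as $ p $ is odd), so $ C_d/2 $ is invertible; and $ -D_d $ is a unit because $ p \nmid D_d $. Taking $ k = \sigma_p(\lambda) $ then yields $ \lambda^k \equiv 1 $, whence $ B_{kS + r} \equiv B_r $ for all $ r $, so the period divides $ kS = pd\,\sigma_p(C_d/2) $ and $ (p + 1)d\,\sigma_p(-D_d) $ respectively, as claimed.

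The main obstacle is not any single computation but keeping the iteration legitimate: I must be sure that the three source theorems genuinely hold for all offsets $ r \geq -1 $ (this is exactly what licenses the re-indexing $ r \mapsto S + r $ and the compounding of the scalar into $ \lambda^k $), and that $ \lambda $ is a unit so that $ \sigma_p(\lambda) $ is defined. Both points follow from the standing hypothesis $ p \nmid C_d D_d $ together with the case assumption, but they are the load-bearing steps; once they are secured the divisibility of the period is immediate.
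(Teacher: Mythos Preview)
Your proof is correct and follows essentially the same approach as the paper: iterate the relevant congruence $B_{S+r}\equiv\lambda B_r$ from Theorems~\ref{Th:Divisibility_Complete}, \ref{Th:Divisibility_p}, and \ref{Th:Divisibility_p+1} to obtain $B_{kS+r}\equiv\lambda^k B_r$, then take $k=\sigma_p(\lambda)$. The paper's proof does exactly this (writing only the $p\mid\Delta$ case and declaring the others similar); your version is simply more explicit about the iteration step, the pure periodicity, and the invertibility of $\lambda$.
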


\begin{proof}
If $ p \mid \Delta $, then Theorem \ref{Th:Divisibility_Complete} implies that 
\begin{align*}
	B_{pd \sigma_p (C_d/2) + r} \equiv (C_d/2)^{\sigma_p (C_d/2)} B_r \equiv B_r \pmod{p}.
\end{align*}
Thus, the Pisano period divides $ pd \sigma_p (C_d/2) $. The other cases are similar.
\end{proof}

\begin{remark} For a more detailed account of Pisano period involving Fibonacci numbers, refer to the papers by Falc\'{o}n \cite{Falcon_Plaza_09} and Wall \cite{Wall_60}.
\end{remark}

\subsubsection{Example (continued)}

Using the example of $ \sqrt{8} $ as before, we calculate $ \Delta = 32 $. We choose $ p = 3 $ and $ 7 $, since $ 32 $ is a quadratic non-residue modulo $ 3 $ and a quadratic residue modulo $ 7 $. Theorems \ref{Th:Divisibility_p+1} and \ref{Th:Divisibility_p} give us the following results, respectively:
\begin{align*}
	B_{r + 8} \equiv B_r \pmod{3}, \quad B_{r + 16} \equiv 6B_{r + 2} - B_r \pmod{7},
\end{align*}
and 
\begin{align*}
	B_{r + 6} \equiv 6B_r - B_{r + 2} \pmod{3}, \quad B_{r + 12} \equiv B_r \pmod{7}.
\end{align*}

\subsection{Law of apparition and repetition} \label{Sec:Law_of_Apparition_and_Repetition}

\subsubsection{Law of apparition}

The results in \S \ref{Sec:Divisibility_by_Primes} resemble the law of apparition of Lucas sequences. For any prime $ p $, define $ \omega(p) $ as the rank of apparition of $ p $, which is the minimum positive integer $ k $ such that $ B_{kd - 1} \equiv 0 $ (mod $ p $). The next theorem summarizes what we know about $ \omega (p) $. 
\begin{theorem}[Law of apparition] \label{Th:Law_of_Apparition} For a prime $ p $, we have 
\begin{enumerate}
	\item[(i)] If $ p \mid C_d $ and $ p \mid D_d $, then $ \omega (p) = 1 $.
	
	\item[(ii)] When $ p = 2 $, the following results hold: 
	\begin{itemize}
		\item If $ 2 \nmid C_d $ and $ 2 \mid D_d $, then
		\begin{align*}
			\omega (2) = \begin{cases} \text{does not exist}, \quad &\text{ if } B_{d - 1} \equiv 1 \pmod{2}; \\ 1, \quad &\text{ if } B_{d - 1} \equiv 0 \pmod{2}.
			\end{cases}
		\end{align*}
	
		\item If $ 2 \mid C_d $ (or $ 2 \mid \Delta $) and $ 2 \nmid D_d $, then $ \omega(2) = 1 $ or $ 2 $.
	
	\item If $ 2 \nmid C_d $ and $ 2 \nmid D_d $, then $ \omega(2) = 1 $ or $ 3 $.		
	\end{itemize}

	\item[(iii)] When $ p $ is an odd prime, the following results hold: 
	\begin{itemize}
		\item If $ p \mid C_d $ and $ p \nmid D_d $, then $ \omega(p) = 1 $ or $ 2 $.
		
		\item If $ p \nmid C_d $ and $ p \mid D_d $, then $ \omega(p) \mid p - 1 $.
		
		\item If $ p \nmid C_d $, $ p \nmid D_d $, and $ p \mid \Delta $, then $ \omega(p) = 1 $ or $ p $.
		
		\item If $ p \nmid C_d $, $ p \nmid D_d $, and $ p \nmid \Delta $, then $ \omega(p) \mid p - (\Delta \mid p) $.	
	\end{itemize}
\end{enumerate}	
\end{theorem}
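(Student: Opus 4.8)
The plan is to reduce everything to a single structural fact already recorded twice in the paper: by the remark following Theorem~\ref{Th:Binet_Formula} and by \S\ref{Sec:Divisibility_within_Sequence}, the sequence $(B_{nd-1})_{n\ge 0}$ equals $B_{d-1}$ times the Lucas sequence of the first kind $U_n = U_n(C_d,-D_d)$ (normalized by $U_0 = 0$, $U_1 = 1$, $U_n = C_d U_{n-1} + D_d U_{n-2}$, discriminant $C_d^2 + 4D_d = \Delta$), and moreover $(B_{nd-1})$ is a \emph{strong} divisibility sequence. The workhorse lemma is then: $p \mid B_{kd-1}$ holds exactly when $\omega(p) \mid k$. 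Indeed, if $p \mid B_{kd-1}$ then $p$ divides $\gcd(B_{kd-1}, B_{\omega(p)d-1}) = B_{\gcd(k,\omega(p))d-1}$, and minimality of $\omega(p)$ forces $\gcd(k,\omega(p)) = \omega(p)$, i.e. $\omega(p)\mid k$. This collapses each assertion to the task of exhibiting \emph{one} index $k$ with $p\mid B_{kd-1}$, after which $\omega(p)\mid k$; the theorems of \S\ref{Sec:Divisibility_by_Primes}, read at $r=-1$, furnish exactly these indices.

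For an odd prime $p$ (part (iii)) I would simply read off the vanishing index from the earlier results and apply the workhorse lemma. If $p\nmid C_dD_d$ and $p\mid\Delta$, Theorem~\ref{Th:Divisibility_Complete} gives $B_{pd-1}\equiv 0$, so $\omega(p)\mid p$, whence $\omega(p)\in\{1,p\}$. If $p\nmid C_dD_d\Delta$, Theorems~\ref{Th:Divisibility_p} and~\ref{Th:Divisibility_p+1} give $B_{(p-1)d-1}\equiv 0$ when $\Delta$ is a quadratic residue and $B_{(p+1)d-1}\equiv 0$ when it is a non-residue, so $\omega(p)\mid p-(\Delta\mid p)$. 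For $p\mid C_d$ and $p\nmid D_d$, the theorem treating that case shows $B_{nd-1}\equiv 0$ for every even $n$; the least such is $n=2$, so $\omega(p)\mid 2$ and $\omega(p)\in\{1,2\}$. For $p\nmid C_d$ and $p\mid D_d$, the corresponding theorem gives $B_{(p-1)d-1}\equiv 0$, hence $\omega(p)\mid p-1$.

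The prime $p=2$ (part (ii)) and the common case (i) are cleanest via the factorization $B_{nd-1}=B_{d-1}U_n$ reduced modulo the prime. When $2\nmid C_d$ and $2\nmid D_d$, $(U_n)$ is the Fibonacci sequence mod $2$, of period $3$, so $2\mid U_n$ iff $3\mid n$; thus $\omega(2)=1$ if $2\mid B_{d-1}$ and $\omega(2)=3$ otherwise. When $2\mid C_d$ and $2\nmid D_d$ (equivalently $2\mid\Delta$, since $\Delta\equiv C_d\pmod 2$), $U_2 = C_d\equiv 0$ gives $\omega(2)\mid 2$. When $2\nmid C_d$ and $2\mid D_d$, the recurrence collapses to $U_n\equiv U_{n-1}$, so $U_n\equiv 1$ for all $n\ge 1$; hence $2\mid B_{nd-1}$ can hold only through the factor $B_{d-1}$, giving $\omega(2)=1$ if $B_{d-1}$ is even and no rank of apparition otherwise. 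Finally, in case (i), for any prime $p$ one has $U_n\equiv 0$ for all $n\ge 2$ while the opening divisibility remark of \S\ref{Sec:Divisibility_by_Primes} (at $n=1$, $r=-1$) yields $p\mid B_{d-1}$, so $\omega(p)=1$.

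The main obstacle is precisely the step the workhorse lemma cannot resolve on its own: it only delivers $\omega(p)\mid k$, so it pins $\omega(p)$ down only up to divisors, and separating $\omega(p)=1$ from $\omega(p)=2$ or $\omega(p)=p$ hinges on the independent criterion $\omega(p)=1\iff p\mid B_{d-1}$. Consequently the delicate work is to track the scalar factor $B_{d-1}$ throughout — it is exactly this factor that manufactures the ``$1$ or $2$'', ``$1$ or $p$'', and ``does not exist'' alternatives, and whether $p\mid B_{d-1}$ is \emph{not} determined by $C_d$, $D_d$, $\Delta$ alone. A secondary point requiring care is the $p=2$ bookkeeping, where one must confirm the period of $(U_n)\bmod 2$ in each parity regime, together with the standing nondegeneracy hypotheses ($\Delta\ne 0$, $B_{d-1}\ne 0$) under which the underlying Lucas-sequence identities remain valid.
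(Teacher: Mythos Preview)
Your approach tracks the paper's closely: for odd primes you cite the same congruences from \S\ref{Sec:Divisibility_by_Primes}, and for $p=2$ you reduce the recurrence modulo~$2$---you through the factorization $B_{nd-1}=B_{d-1}U_n$, the paper directly on $(B_{nd-1})$, but it is the same computation. Your explicit workhorse lemma, using strong divisibility to pass from ``$p\mid B_{kd-1}$'' to ``$\omega(p)\mid k$'', is a clean addition that the paper's proof leaves implicit.

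There is, however, a real gap in your treatment of case~(i). You invoke the opening remark of \S\ref{Sec:Divisibility_by_Primes} at $n=1$, $r=-1$ to conclude $p\mid B_{d-1}$, but that remark is stated without proof there, and the instance you need fails in general: the recurrence $B_{\nu+2d}=C_dB_{\nu+d}+D_dB_\nu$ forces $p\mid B_{nd+r}$ only once $n\ge 2$. Concretely, take $d=2$, $a_1=2$, $a_2=b_1=b_2=1$; then $C_2=4$ and $D_2=-2$ are both even, yet $B_{d-1}=B_1=b_1=1$ is odd, so $\omega(2)=2\ne 1$. Your own factorization already shows what is actually provable here: $U_n\equiv 0\pmod p$ for $n\ge 2$ gives $p\mid B_{2d-1}$, hence only $\omega(p)\in\{1,2\}$, with $\omega(p)=1$ precisely when $p\mid B_{d-1}$. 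The paper's proof shares this defect, but the specific step in your argument that breaks is the application of that remark at $n=1$.
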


\begin{proof}
We have already dealt with the case of odd primes, so we can suppose $ p = 2 $. Using the fact $ B_{2d - 1} = C_d B_{d - 1} $, we see that if $ p \mid C_d $ (or $ p \mid \Delta $) and $ p \nmid D_d $, then $ B_{2d - 1} = 0 $. Otherwise, if $ p \nmid C_d $ and $ p \nmid D_d $, then 
\begin{align*}
	B_{3d - 1} \equiv B_{2d - 1} + B_{d - 1} \equiv 2B_{d - 1} \equiv 0 \pmod{2}.
\end{align*}
However, if $ p \nmid C_d $, but $ p \mid D_d $, then for $ n \geq 1 $,
\begin{align*}
	B_{(n + 1)d - 1} \equiv B_{nd - 1} \pmod{2}.
\end{align*}
An inductive argument shows that $ B_{nd - 1} \equiv B_{d - 1} $ (mod $ 2 $) for all $ n \geq 1 $. Thus, it is possible that $ \omega (2) $ does not exists if $ B_{d - 1} \equiv 1 $ (mod $ 2 $).
\end{proof}

\subsection{Lucas pseudoprime}

Theorem \ref{Th:Law_of_Apparition} provides an extension to the notion of Lucas pseudoprime. For any positive integer $ n $, by an abuse of notation, set $ \epsilon (n) = (\Delta \mid n) $ as the Jacobi symbol modulo $ n $. We say that $ n $ is a Lucas pseudoprime with respect to $ (B_\nu) $, if $ n \nmid C_d D_d \Delta $ and 
\begin{align*}
	B_{(n - \epsilon (n))d - 1} \equiv 0 \pmod{n}.
\end{align*}
If the congruence is false, we can conclude that $ n $ is composite. However, if the congruence holds, it does not necessarily imply that $ n $ is prime and hence the term `pseudoprime'. For example, we take $ (B_\nu) $ as the sequence of denominators of the convergent to $ \sqrt{8} $ again. Choose $ n = 35 $, then the Jacobi symbol is $ (32 \mid 35) = -1 $. On the other hand, 
\begin{align*}
	B_{71} = 641614773393652358999201580 \equiv 0 \pmod{35}.
\end{align*}
Hence, $ 35 $ is a pseudoprime with respect to $ (B_\nu) $.

\subsubsection{Law of repetition}

For the next part, we require an identity that can be traced back to Lagrange. Given any odd integer $ m $, it holds that
\begin{align*}
	X^m - Y^m &= \sum_{k = 0}^{(m - 1)/2} \frac{m}{k} \binom{m - k - 1}{k - 1} (XY)^k \bigg( X - Y \bigg)^{m - 2k},
\end{align*}
This gives us an expansion for the term $ B_{mnd - 1}/B_{d - 1} $ as
\begin{align*}
	\frac{B_{mnd - 1}}{B_{d - 1}} = \sum_{k = 0}^{(m - 1)/2} \frac{m}{k} \binom{m - k - 1}{k - 1} \Delta^{(m - 2k - 1)/2} (-D_d)^{nk} \bigg( (-D_d)^{n - 1} \frac{\alpha^n - \beta^n}{\alpha - \beta} \bigg)^{m - 2k}.
\end{align*}
We write $ p^e \mid \mid N $ if $ e $ is the highest power of $ p $ dividing $ N $. The next theorem gives the law of repetition.
\begin{theorem} [Law of repetition]  \label{Th:Law_of_Repetition} If $ p^e \mid \mid B_{nd - 1}/B_{d - 1} $ and $ p \nmid m $, then $ p^{e + f} \mid B_{p^f mnd - 1}/B_{d - 1} $. In addition, if $ p \nmid D_d $, then the power is exact.	
\end{theorem}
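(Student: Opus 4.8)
My plan is to transfer everything to the $p$-adic valuation $v_p$ of the normalized sequence $U_n := B_{nd-1}/B_{d-1}$, which by the $r=-1$ case of Theorem~\ref{Th:Binet_Formula} equals $(-D_d)^{n-1}(\alpha^n-\beta^n)/(\alpha-\beta)$, so that the assertion becomes $v_p(U_{p^f m n})\ge e+f$, with equality when $p\nmid D_d$. The engine is the expansion displayed just before the theorem, which for odd $m$ reads
\[
	U_{mn}=\sum_{k=0}^{(m-1)/2}\frac{m}{k}\binom{m-k-1}{k-1}\Delta^{(m-2k-1)/2}(-D_d)^{nk}U_n^{\,m-2k},
\]
with the $k=0$ term read as $\Delta^{(m-1)/2}U_n^m$. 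I would extract two local lemmas from this and then iterate, taking $p$ odd and $e\ge 1$ throughout and postponing $p=2$ and even multipliers to the end.

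The first lemma handles an odd multiplier $m$ with $p\nmid m$: if $v_p(U_n)\ge e\ge 1$, then $v_p(U_{mn})\ge v_p(U_n)$, with equality when $p\nmid D_d$. The key observation is that the powers of $U_n$ occurring in the expansion are $m,m-2,\dots,3,1$, all odd, so $U_n$ appears to the first power in exactly one term, namely $k=(m-1)/2$; a short computation gives that its coefficient is exactly $m$, so this term equals $m(-D_d)^{n(m-1)/2}U_n$. As $p\nmid m$, it has valuation $\ge v_p(U_n)$, while every remaining term carries $U_n^{\,m-2k}$ with $m-2k\ge 3$ and hence valuation $\ge 3\,v_p(U_n)>v_p(U_n)$. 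An ultrametric comparison gives the inequality; when $p\nmid D_d$ the distinguished term has valuation exactly $v_p(U_n)$ and is strictly smallest, forcing equality.

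The second lemma multiplies the index by $p$: if $v_p(U_n)=e\ge 1$, then $v_p(U_{pn})\ge e+1$, with equality when $p\nmid D_d$. Setting $m=p$, the decisive arithmetic fact is that the coefficients $c_k=\frac{p}{k}\binom{p-k-1}{k-1}$ are divisible by $p$ for $1\le k\le(p-1)/2$, which follows from $k\,c_k=p\binom{p-k-1}{k-1}$ together with $\gcd(k,p)=1$. Hence every term with $k\ge 1$ has valuation $\ge 1+(p-2k)e\ge 1+e$, while the $k=0$ term $\Delta^{(p-1)/2}U_n^p$ has valuation $\ge pe\ge e+1$, giving the divisibility. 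For exactness when $p\nmid D_d$, the $k=(p-1)/2$ term equals $p(-D_d)^{n(p-1)/2}U_n$, of valuation exactly $e+1$ and strictly below all others, so $v_p(U_{pn})=e+1$. Iterating this lemma along $n,pn,\dots,p^f n$ yields $v_p(U_{p^f n})\ge e+f$ (with equality at each stage if $p\nmid D_d$), and a single application of the first lemma at the index $p^f n$ then gives $v_p(U_{p^f m n})\ge e+f$, again with equality when $p\nmid D_d$.

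The substance, and the step I expect to be most delicate, is the exactness claim rather than the divisibility: it rests on the two sharp inputs above, namely that the unique first-power term has a $p$-unit coefficient precisely when $p\nmid D_d$, and that the hypothesis $e\ge 1$ pushes every term carrying $U_n$ to a power at least $3$ strictly higher, turning the ultrametric inequality into an equality. The restriction $e\ge 1$ is genuinely needed: if $p\nmid U_n$, the $k=0$ term $\Delta^{(p-1)/2}U_n^p$ can be a $p$-unit and the repetition never starts. Finally, two cases lie outside the odd-$m$ expansion, and I would dispatch them with the companion sequence $V_n:=B_{2nd-1}/B_{nd-1}$, for which $U_{2n}=U_nV_n$ and, by the identity $V_n^2-\Delta U_n^2=4(-D_d)^n$ recorded earlier, $v_p(V_n)=0$ whenever $p$ is odd, $p\nmid D_d$, and $p\mid U_n$; thus an even multiplier coprime to $p$ is reduced to its odd part at no cost to $v_p$, while the prime $p=2$ is treated by the same doubling identity together with a separate, more delicate $2$-adic analysis.
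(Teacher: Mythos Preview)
Your approach is essentially the paper's: isolate the unique linear-in-$U_n$ term of the Lagrange expansion (coefficient $m$, respectively $p$), observe that all remaining terms carry $U_n^{\ge 3}$, and read off the valuation. The paper compresses this into the single congruence $U_{mn}\equiv m(-D_d)^{n(m-1)/2}U_n\pmod{U_n^3}$ (and similarly with $pm$ in place of $m$), which is exactly your ultrametric comparison stated modulo $U_n^3$; your two-lemma decomposition (multiply by a unit $m$, then by $p$) and the paper's direct jump to $pm$ are the same computation ordered differently.

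Where you go beyond the paper is in flagging the genuine gaps of the odd-$m$ expansion: the paper's displayed identity is only asserted for odd $m$, and the proof silently applies it without addressing even multipliers or $p=2$. Your proposed fix via $U_{2n}=U_nV_n$ and $V_n^2-\Delta U_n^2=4(-D_d)^n$ is the standard and correct way to recover these cases (for odd $p$ with $p\mid U_n$ and $p\nmid D_d$ one gets $p\nmid V_n$, so doubling the index is $v_p$-neutral). The $2$-adic exactness really does need a separate argument, as you note; the paper does not supply one.
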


\begin{proof} By induction, it suffices to establish the theorem for $ f = 0 $ and $ 1 $. The previous identity gives
\begin{align*}
	\frac{B_{mnd - 1}}{B_{d - 1}} &\equiv m (-D_d)^{n(m - 1)/2} \frac{B_{nd - 1}}{B_{d - 1}} \pmod{(B_{nd - 1}/B_{d - 1})^3}, \\
	\frac{B_{pmnd - 1}}{B_{d - 1}} &\equiv pm (-D_d)^{n(pm - 1)/2} \frac{B_{nd - 1}}{B_{d - 1}} \pmod{(B_{nd - 1}/B_{d - 1})^3}.
\end{align*}
Hence, if $ p^e \mid \mid B_{nd - 1}/B_{d - 1} $, then $ p^e \mid B_{mnd - 1}/B_{d - 1} $ and $ p^{e + 1} \mid B_{pmnd - 1}/B_{d - 1} $. The second part is clear.
\end{proof}

\subsection{Non-integer sequences}

While all the theorems presented above assume that $ (a_\nu) $ and $ (b_\nu) $ are integer sequences, there is no reason not to extend them to real sequences as well. The only casualties are that $ C_d $, $ D_d $, and consequently $ B_\nu $ are not necessarily integers. However, most of our results still apply except those involving divisibility properties in \S \ref{Sec:Divisibility_Properties}. We demonstrate here an example. Consider the sequence
\begin{align*}
	B_\nu = \begin{cases} \sqrt{3} B_{\nu - 1} + B_{\nu - 2}, \quad &\text{ if } \nu \text{ is odd}; \\ B_{\nu - 1} - B_{\nu - 2}, \quad &\text{ if } \nu \text{ is even}. \end{cases}
\end{align*}
The first few values of $ B_\nu $ are
\begin{align*}
	B_1 = \sqrt{3}, \quad B_2 = \sqrt{3} - 1, \quad B_3 = 3, \quad B_4 = 4 - 2\sqrt{3}.	
\end{align*}
We compute $ \alpha = -(\sqrt{3} - \sqrt{7})/2  $ and $ \beta = -(\sqrt{3} + \sqrt{7})/2 $, so using Binet's formula (Theorem \ref{Th:Binet_Formula}), we have
\begin{align*}
	B_{4k + 1} = \sqrt{\frac{3}{7}} (\alpha^{2k - 1} - \beta^{2k - 1}  - \sqrt{3} (\alpha^{2k} - \beta^{2k})).
\end{align*}
Plugging this value into Theorem \ref{Th:Telescoping_arctan}, we obtain
\begin{align*}
	\sum_{n = 1}^\infty \tan^{-1} \frac{3}{B_{4k + 1}} = \tan^{-1} \frac{\sqrt{3}}{4} + \tan^{-1} \frac{\sqrt{3}}{19} + \tan^{-1} \frac{\sqrt{3}}{91} + \ldots = \tan^{-1} \frac{1}{\sqrt{3}} = \frac{\pi}{6}.
\end{align*} 

\section{Further discussion}

The results presented in this paper primarily focus on the denominator, $ B_{\nu, \lambda} $. However, it should be possible to give similar results involving the numerator, $ A_{\nu, \lambda} $. We leave that to the readers.

Apart from that, the Fibonacci numbers have a companion sequence called the Lucas numbers. By examining the initial values, we observe that $ (A_{\nu, \lambda}) $ is not the desired companion sequence. Therefore, it is still an open problem to construct the corresponding Lucas sequence for $ (B_{\nu, \lambda}) $.

\end{document}